\documentclass[a4paper, twoside, 10pt, final]{amsart}
\usepackage[top=2.5cm, bottom=2cm, inner=2.5cm, outer=2.5cm, foot=1cm]{geometry}
\usepackage{algorithm,algorithmic,amssymb,amsfonts,longtable,microtype}
\usepackage[utf8]{inputenc}
\usepackage{lmodern}
\usepackage{svninfo}
\usepackage[bookmarks=true,colorlinks=true,linkcolor=blue,pdfa=true]{hyperref}
\pdfobjcompresslevel=0
\usepackage[notcite,notref]{showkeys}

\setcounter{tocdepth}{1}



\hypersetup{pdftitle={A characterization of arithmetical invariants by the monoid of relations II: The monotone catenary degree and applications to semigroup rings},pdfauthor={Andreas Philipp}}
\usepackage{aliascnt}
\let\orgautoref\autoref
\renewcommand{\autoref}
        {%
\def\corollaryautorefname{Corollary}%
\def\definitionautorefname{Definition}%
\def\lemmaautorefname{Lemma}%
\def\propositionautorefname{Proposition}%
\def\exampleautorefname{Example}%
\def\remarkautorefname{Remark}%
\def\chapterautorefname{Chapter}%
\def\sectionautorefname{Section}%
\def\subsectionautorefname{Subsection}%
\def\subsubsectionautorefname{Step}%
\def\algorithmautorefname{Algorithm}%
         \orgautoref}

\newtheorem{theorem}{Theorem}[section]
\newaliascnt{lemma}{theorem}
\newtheorem{lemma}[lemma]{Lemma}
\aliascntresetthe{lemma}
\newaliascnt{corollary}{theorem}

\aliascntresetthe{corollary}
\newaliascnt{proposition}{theorem}
\newtheorem{proposition}[proposition]{Proposition}
\aliascntresetthe{proposition}
\theoremstyle{definition}
\newaliascnt{definition}{theorem}
\newtheorem{definition}[definition]{Definition}
\aliascntresetthe{definition}
\newaliascnt{remark}{theorem}

\aliascntresetthe{remark}
\newaliascnt{notation}{theorem}

\aliascntresetthe{notation}
\newaliascnt{situation}{theorem}

\aliascntresetthe{situation}
\newaliascnt{example}{theorem}
\newtheorem{example}[example]{Example}
\aliascntresetthe{example}

\newcommand{\F}{\mathbb F}
\newcommand{\N}{\mathbb N}

\newcommand{\Z}{\mathbb Z}
\newcommand{\Q}{\mathbb Q}

\DeclareMathOperator{\spec}{spec}
\DeclareMathOperator{\supp}{supp}
\DeclareMathOperator{\ord}{ord}
\DeclareMathOperator{\pic}{Pic}

\newcommand{\cat}{\mathsf c}
\newcommand{\cadj}{\cat_{\mathrm{ad}}}
\newcommand{\ceq}{\cat_{\mathrm{eq}}}
\newcommand{\cmon}{\cat_{\mathrm{mon}}}
\newcommand{\mueq}{\mu_{\mathrm{eq}}}
\newcommand{\muadj}{\mu_{\mathrm{ad}}}
\newcommand{\Req}{\mathcal R_{\mathrm{eq}}}
\newcommand{\equ}{\approx_{\mathrm{eq}}}

\newcommand{\punkt}[1]{#1^\bullet}
\newcommand{\mal}[1]{#1^\times}
\newcommand{\mdots}{\cdot\ldots\cdot}

\newcommand{\wmal}[1]{\mal{\widehat{#1}}}

\newcommand{\ldbrack}{[\negthinspace[}
\newcommand{\rdbrack}{]\negthinspace]}

\numberwithin{equation}{section}

\makeatletter
\renewcommand{\p@enumii}{}
\makeatother

\begin{document}
\svnInfo $Id: char-cmon-mon-rel.tex 2234 2011-04-02 06:28:44Z aph $
\svnKeyword $HeadURL: svn+ssh://aph@aph.homelinux.net/var/svn/notes/publications/submitted/char-cmon-mon-rel/char-cmon-mon-rel.tex $

\address{Institut f\"ur Mathematik und Wissenschaftliches Rechnen \\
Karl-Franzens-Universit\"at Graz \\
Heinrichstra\ss e 36\\
8010 Graz, Austria} \email{andreas.philipp@uni-graz.at}

\thanks{I thank my Ph.D. thesis advisors Prof. Franz Halter-Koch and Prof. Alfred Geroldinger for all the help, advice, and mathematical discussions during my thesis which led to all results in this article.}

\author{Andreas Philipp}

\keywords{non-unique factorizations, monoid of relations, monotone catenary degree}

\subjclass[2010]{20M13, 13A05, 13F15, 11T55}

\begin{abstract}
The investigation and classification of non-unique factorization phenomena has attracted some interest in recent literature. For finitely generated monoids, S.T.~Chapman and P.A.~García-Sánchez, together with several co-authors, derived a method to calculate the catenary and tame degree from the monoid of relations. Then, in \cite{phil10}, the algebraic structure of this approach was investigated and the restriction to finitely generated monoids was removed. We now extend these ideas further to the monotone catenary degree and then apply all these results to the explicit computation of arithmetical invariants of semigroup rings.
\end{abstract}

\title[A characterization of arithmetical invariants by the monoid of relations II]{A characterization of arithmetical invariants by the monoid of relations II: The monotone catenary degree and applications to semigroup rings}

\maketitle

\section{Introduction}
\label{sec:int}
\bigskip

An integral domain and, more generally, a commutative, cancellative monoid is called \emph{atomic} if every non-zero non-unit has a factorization into irreducible elements, and it is called \emph{factorial} if this factorization is unique up to ordering and associates. Non-unique factorization theory is concerned with the description and classification of non-unique factorization phenomena arising in atomic domains. It has its origin in algebraic number theory---the ring of integers of an algebraic number field being atomic but generally not factorial---but in the last decades it became an autonomous theory with many connections to zero-sum theory, commutative ring theory, module theory, additive combinatorics, and representations of monoids. We refer to the monograph \cite{MR2194494} for a recent presentation of the various aspects of the theory.

To describe these phenomena, various invariants have been studied in the literature. Among these, the tame degree, the catenary degree, and---a variant thereof---the monotone catenary degree received some attention in recent research; for some new results, see, e.g. \cite{garger10}, \cite{MR2660910}, and \cite{Ge-Gr-Sc11a}; for an overview of known results and additional references, see, e.g., the monograph \cite{MR2194494}; for a statement of the formal definitions, see \autoref{sec:pre} and the beginning of \autoref{sec:mstep}. Additionally, monotone and near monotone chains of factorizations have been studied in \cite{MR2140688}, \cite{MR2208111}, and \cite[Section 7]{MR2660910}.

For an integral domain, non-unique factorization phenomena only concern the multiplicative monoid of that domain. Thus we will only derive the theory for commutative, cancellative monoids, and apply these results again to domains afterwards.

The monoid of relations associated to a monoid and a certain invariant $\mu(\cdot)$ have been used successfully to study the catenary degree and other invariants. Investigations of this type started only fairly recently. In \cite{MR1719711}, such investigations were carried out for finitely generated monoids using results from \cite{MR2494887} and \cite{MR2254337}. In \cite{MR2243561} and \cite{Om}, these results and expansions thereof were applied in the investigation of numerical monoids; for a detailed exposition of numerical monoids and applications, see, e.g. the monograph \cite{MR2549780}. In \cite{phil10} the algebraic structure of this method was studied: i.e., the invariant $\mu(\cdot)$, its definition, and the monoid of relations. By this more algebraic-structural approach, the results could be extended to not necessarily finitely generated monoids.

In the present paper, we extend the tools from \cite{phil10} to study the monotone catenary degree by submonoids of the monoid of relations. In \autoref{sec:comp}, we apply these results and many of the results from \cite{phil10} to the explicit computation of arithmetical invariants for various semigroup rings. Additionally, the arithmetic of some some generalized power series rings is studied there.

Moreover, these abstract characterizations, and, in particular, \autoref{3.4} are used successfully for investigations on the arithmetic of non-principal orders in algebraic number fields in \cite{phil11b}.

\bigskip
\section{Preliminaries}
\label{sec:pre}
\bigskip

In this note, our notation and terminology will be consistent with \cite{MR2194494}. Let $\N$ denote the set of positive integers and let $\N_0=\N\uplus\lbrace 0\rbrace$. For integers $n,\,m\in\Z$, we set $[n,m]=\lbrace x\in\Z\mid n\leq x\leq m\rbrace$. By convention, the supremum of the empty set is zero and we set $\frac{0}{0}=1$. The term ``monoid'' always means a commutative, cancellative semigroup with unit element. We will write all monoids multiplicatively. For a monoid $H$, we denote by $H^\times$ the set of invertible elements of $H$. We call $H$ reduced if $H^\times=\lbrace 1\rbrace$ and call $H_{\mathrm{red}}=H/H^\times$ the reduced monoid associated with $H$. Of course, $H_{\mathrm{red}}$ is always reduced.
Note that the arithmetic of $H$ is determined by $H_{\mathrm{red}}$ and therefore we can restrict our attention to reduced monoids whenever convenient. We denote by $\mathcal A(H)$ the \emph{set of atoms} of $H$, by $\mathcal A(H_{\mathrm{red}})$ the set of atoms of the associated reduced monoid $H_{\mathrm{red}}$, by $\mathsf Z(H)=\mathcal F(\mathcal A(H_{\mathrm{red}}))$ the free (abelian) monoid with basis $\mathcal A(H_{\mathrm{red}})$, and by $\pi_H:\mathsf Z(H)\rightarrow H_{\mathrm{red}}$ the unique homomorphism such that $\pi_H|\mathcal A(H_{\mathrm{red}})=\mathrm{id}$. We call $\mathsf Z(H)$ the \emph{factorization monoid} and $\pi_H$ the \emph{factorization homomorphism} of $H$. For $a\in H$, we denote by $\mathsf Z(a)=\pi_H^{-1}(a\mal H)$ the \emph{set of factorizations} of $a$ and denote by $\mathsf L(a)=\lbrace |z|\mid z\in\mathsf Z(a)\rbrace$ the \emph{set of lengths} of $a$, where $|\cdot|$ is the ordinary length function in the free monoid $\mathsf Z(H)$. In this terminology, a monoid $H$ is called \emph{half-factorial} if $|\mathsf L(a)|=1$ for all $a\in H\setminus\mal H$---this coincides with the classical definition of being half-factorial, since then every two factorizations of an element have the same length---and \emph{factorial} if $|\mathsf Z(a)|=1$ for all $a\in H\setminus\mal H$.

With all these notions at hand, for $a\in H$, we set
\[
\rho(a)=\frac{\sup\mathsf L(a)}{\min\mathsf L(a)}\:\mbox{ and call }\:
\rho(H)=\sup\lbrace\rho(a)\mid a\in H\rbrace\:\mbox{ the \emph{elasticity} of }H.
\]
Note that $H$ is half-factorial if and only if $\rho(H)=1$.

For two factorizations $z,\,z'\in\mathsf Z(H)$, we call
\[
\mathsf d(z,z')=\max\left\lbrace\left|\frac{z}{\gcd(z,z')}\right|,\left|\frac{z'}{\gcd(z,z')}\right|\right\rbrace
\quad\mbox{the \emph{distance} between }z\mbox{ and }z'
\]
and, for two subset $X,\,Y\subset\mathsf Z(H)$, we call
\[
\mathsf d(X,Y)=\min\lbrace\mathsf d(x,y)\mid x\in X,\,y\in Y\rbrace
\quad\mbox{the \emph{distance} between }X\mbox{ and }Y.
\]
If one of the sets is a singleton, say $X=\lbrace x\rbrace$, we write $\mathsf d(\lbrace x\rbrace,Y)=\mathsf d(x,Y)$.\\
Let $a\in H$. We call two lengths $k,\,l\in\mathsf L(a)$ with $k<l$ \emph{adjacent} if $[k,l]\cap\mathsf L(a)=\lbrace k,l\rbrace$ and, for $M\subset\N$, we set $\mathsf Z_M(a)=\lbrace x\in\mathsf Z(a)\mid |x|\in M\rbrace$. If the set is a singleton, say $M=\lbrace k\rbrace$, then we write $\mathsf Z_{\lbrace k\rbrace}(a)=\mathsf Z_k(a)$.

\begin{definition}
Let $H$ be an atomic monoid and let $a\in H$.
\begin{enumerate}
\item Factorizations $z_0,\ldots,z_n\in\mathsf Z(a)$ with $n\in\N$ and $\mathsf d(z_{i-1},z_i)\leq N$ for some $N\in\N$ and $i\in [1,n]$ are called
\begin{itemize}
\item an \emph{$N$-chain} concatenating $z_0$ and $z_n$ (in $\mathsf Z(H)$).
\item a \emph{monotone $N$-chain} concatenating $z_0$ and $z_n$ (in $\mathsf Z(H)$) if $|z_{i-1}|\leq |z_i|$ for all $i\in [1,n]$.
\item an \emph{equal-length $N$-chain} concatenating $z_0$ and $z_n$ (in $\mathsf Z(H)$) if $|z_{i-1}|=|z_i|$ for all $i\in [1,n]$.
\end{itemize}
\item The
\begin{itemize}
\item \emph{catenary degree $\cat(a)$}
\item \emph{monotone catenary degree $\cmon(a)$}
\item \emph{equal catenary degree $\ceq(a)$}
\end{itemize}
denotes the smallest $N\in\mathsf N_0\cup\lbrace\infty\rbrace$ such that, for all $z,\,z'\in\mathsf Z(a)$, there is
\begin{itemize}
\item an $N$-chain concatenating $z$ and $z'$.
\item a monotone $N$-chain concatenating $z$ and $z'$.
\item an equal-length $N$-chain concatenating $z$ and $z'$.
\end{itemize}
Then we call
\begin{itemize}
\item $\cat(H)=\sup\lbrace\cat(a)\mid a\in H\rbrace$ the \emph{catenary degree} of $H$.
\item $\cmon(H)=\sup\lbrace\cmon(a)\mid a\in H\rbrace$ the \emph{monotone catenary degree} of $H$.
\item $\ceq(H)=\sup\lbrace\ceq(a)\mid a\in H\rbrace$ the \emph{equal-length catenary degree} of $H$.
\end{itemize}
\end{enumerate}
\end{definition}

Note that $\sup\lbrace\cat(H),\ceq(H)\rbrace\leq\cmon(H)$.

For the description and computation of the monotone catenary degree, we follow the same two step procedure as in \cite{garger10}. In order to formulate this precisely, we need to define another variant of the catenary degree.

\begin{definition}
\label{def:cadj}
Let $H$ be an atomic monoid.
For $a\in H$, we define
 \[
 \cadj (a)=\sup\lbrace\mathsf d(\mathsf Z_k(a),\mathsf Z_l(a))\mid k,\,l\in\mathsf L(a)\mbox{ are adjacent}\rbrace
 \]
 as the \emph{adjacent catenary degree} of $a$.\\
 Also, $\cadj(H)=\sup\lbrace\cadj(a)\mid a\in H\rbrace$ is called the \emph{adjacent catenary degree} of $H$.
\end{definition}

By \cite[(4.1)]{garger10}, we find
\[
\mathsf c(H)\leq\cmon(H)=\sup\lbrace\ceq(H),\,\cadj(H)\rbrace.
\]

Here we follow the same strategy as in \cite[Section 3]{phil10} for the definition of the $\mathcal R$-relation and the $\mu$-invariant.

\begin{definition}
Let $H$ be an atomic monoid and let $a\in H$.
\begin{enumerate}
\item Factorizations $z_0,\ldots,z_n\in\mathsf Z(a)$ with $n\in\N$ and $\gcd(z_{i-1},z_i)\neq 1$ for all $i\in [1,n]$ are called
\begin{itemize}
\item an \emph{$\mathcal R$-chain} concatenating $z_0$ and $z_n$ (in $\mathsf Z(H)$).
\item a \emph{monotone $\mathcal R$-chain} concatenating $z_0$ and $z_n$ (in $\mathsf Z(H)$) if $|z_{i-1}|\leq|z_i|$ for all $i\in [1,n]$.
\item an \emph{equal-length $\mathcal R$-chain} concatenating $z_0$ and $z_n$ (in $\mathsf Z(H)$) if $|z_{i-1}|=|z_i|$ for all $i\in [1,n]$.
\end{itemize}
\item Two elements $z,\,z'\in\mathsf Z(H)$ are
\begin{itemize}
\item \emph{$\mathcal R$-related}
\item \emph{$\Req$-related}
\end{itemize}
if there is an
\begin{itemize}
\item $\mathcal R$-chain
\item equal-length $\mathcal R$-chain
\end{itemize}
 concatenating $z$ and $z'$. We then write $z\approx z'$ respectively $z\equ z'$.
\end{enumerate}
\end{definition}

Note that, with the above definitions, $\approx$ and $\equ$ are congruences on $\mathsf Z(H)\times\mathsf Z(H)$.

Based on these definitions, we can now recall the definition of the $\mu$-invariant (for reference see \cite[Section 3]{phil10}) and give the definition of the $\mueq$-invariant and the $\muadj$-invariant. Note that the definition of the last one differs significantly from the other two since there is no appropriate equivalence relation we can make use of.

\begin{definition}
\label{def:muinv}
Let $H$ be an atomic monoid and let $a\in H$.
\begin{enumerate}
\item \label{def:mu} Let $\mathcal R_a$ denote the set of \emph{$\mathcal R$-equivalence classes} of $\mathsf Z(a)$ and, for $\rho\in\mathcal R_a$, let $|\rho|=\min\lbrace|z|\mid z\in\rho\rbrace$. We set
\[
\mu(a)=\sup\lbrace |\rho|\mid\rho\in\mathcal R_a\rbrace\leq\sup\mathsf L(a)
\]
and define $\mu (H)=\sup\lbrace\mu (a)\mid a\in H\rbrace$.
\item \label{def:mueq} For $k\in\mathsf L(a)$, let $\mathcal R_{a,k}$ denote the set of \emph{$\Req$-equivalence classes} of $\mathsf Z_k(a)$. We set
\[
\mueq (a)=\sup\lbrace k\in\mathsf L(a)\mid |\mathcal R_{a,k}|>1\rbrace\leq\sup\mathsf L(a)
\]
and define $\mueq (H)=\sup\lbrace\mueq (a)\mid a\in H\rbrace$.
\item \label{def:muadj} We set
\[
\muadj (a)=\sup\lbrace k\in\mathsf L(a)\mid \mathsf d(\mathsf Z_k(a),\mathsf Z_l(a))=k\mbox{ for }l\in\mathsf L(a),\,l<k,\,l\mbox{ adjacent to }k\rbrace.
\]
Then we set $\muadj (H)=\sup\lbrace\muadj (a)\mid a\in H\rbrace$.
\end{enumerate}
\end{definition}
Then $\mu (H)=0$ if and only if $|\mathcal R_a|\leq 1$ for all $a\in H$ and $\mueq (H)=0$ if and only if $|\mathcal R_{a,k}|\leq 1$ for all $a\in H$ and $k\in\mathsf L(a)$.

\begin{definition}
Let $H\subset D$ be monoids.
\begin{enumerate}
 \item We call $H\subset D$ \emph{saturated} or, equivalently, \emph{a saturated submonoid} if, for all $a,\,b\in H$, $a\mid b$ in $D$ already implies that $a\mid b$ in $H$; that is, for all $a,\,b\in H$ and $c\in D$, $a=bc$ implies $c\in H$.
 \item If $H\subset D$ is a saturated submonoid, then we set $D/H=\lbrace a\mathsf q(H)\mid a\in D\rbrace$ and $[a]_{D/H}=a\mathsf q(H)$ and we call $\mathsf q(D)/\mathsf q(H)=\mathsf q(D/H)$ the \emph{class group} of $H$ in $D$.
\end{enumerate}
\end{definition}

\begin{definition}
Let $H$ be an atomic monoid. We call
\begin{align*}
\sim_H &=\lbrace(x,y)\in\mathsf Z(H)\times\mathsf Z(H)\mid\pi(x)=\pi(y)\rbrace
&\mbox{the \emph{monoid of relations} of $H$}, \\
\sim_{H,\mathrm{eq}} &=\lbrace (x,y)\in\sim_H\mid |x|=|y|\rbrace
&\mbox{the \emph{monoid of equal-length relations} of $H$}, \\
\sim_{H,\mathrm{mon}} &=\lbrace (x,y)\in\sim_H\mid |x|\leq|y|\rbrace
&\mbox{the \emph{monoid of monotone relations} of $H$},
\end{align*}
and, for $a\in H$, we set
\begin{align*}
\mathcal A_a(\sim_H) &= \mathcal A(\sim_H)\cap(\mathsf Z(a)\times\mathsf Z(a)), \\
\mathcal A_a(\sim_{H,\mathrm{eq}}) &= \mathcal A(\sim_{H,\mathrm{eq}})\cap(\mathsf Z(a)\times\mathsf Z(a)), \\
\mathcal A_a(\sim_{H,\mathrm{mon}}) &= \mathcal A(\sim_{H,\mathrm{mon}})\cap(\mathsf Z(a)\times\mathsf Z(a)).
\end{align*}
\end{definition}

By \cite[Lemma 11]{phil10}, $\sim_H\subset\mathsf Z(H)\times\mathsf Z(H)$ is a saturated submonoid of a free monoid and thus a Krull monoid by \cite[Theorem 2.4.8.1]{MR2194494}.
By \cite[Proposition 4.4.1]{garger10}, $\sim_{H,\mathrm{eq}}\subset\sim_H$ is a saturated submonoid and hence a Krull monoid, and, by \cite[Proposition 4.4.2]{garger10}, $\sim_{H,\mathrm{eq}}$ is finitely generated if $H_{\mathrm{red}}$ is finitely generated. Unfortunately, $\sim_{H,\mathrm{mon}}\subset\sim_H$ is not saturated, but, by \autoref{5.11}, we find that $\sim_{H,\mathrm{mon}}$ is a finitely generated Krull monoid if $H$ is finitely generated.

We briefly recall the main result on the catenary degree from \cite{phil10} and offer a corrected proof for monoids fulfilling the ascending chain condition on principal ideals here.
\begin{lemma}[cf. {\cite[Proposition 8, Corollary 9, Proposition 16]{phil10}}]
\label{2.7}
Let $H$ be an atomic monoid which fulfills the ascending chain condition on principal ideals. Then
\begin{enumerate}
\item\label{2.7.1} $\mathsf c(H)\geq\mu(a)$ for all $a\in H$, and $\mathsf c(H)=\mu(H)$.
\item\label{2.7.2} $\mathsf c(H)=\max\lbrace \mu(a)\mid a\in H,\,\mathcal A_a(\sim_H)\neq\emptyset,\,|\mathcal R_a|>1\rbrace$.
\end{enumerate}
\end{lemma}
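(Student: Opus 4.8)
We may replace $H$ by $H_{\mathrm{red}}$, since $\mathsf Z(H)$, $\mathsf d$, $\mathsf c$ and $\mu$ depend only on it. The single use we make of ACCP is that it forces the divisor relation on $H_{\mathrm{red}}$ to be Noetherian (there is no infinite sequence $a_0,a_1,\ldots$ with $a_{n+1}\mid a_n$ and $a_n\nmid a_{n+1}$), so that one may argue by induction along divisors; this is the ingredient that was missing in \cite{phil10}, where the induction was organised so as to shorten factorizations, which the reductions below fail to do. For the lower bound I would argue directly: assume $\mathsf c(H)<\infty$ and $|\mathcal R_a|>1$ (otherwise $\mu(a)=0$ and there is nothing to prove), fix an $\mathcal R$-class $\rho$ of $\mathsf Z(a)$ with $|\rho|$ as close to $\mu(a)$ as desired, pick $z\in\rho$ and $z'\in\mathsf Z(a)\setminus\rho$, and take a $\mathsf c(H)$-chain $z=z_0,\ldots,z_n=z'$. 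Because $z\not\approx z'$ some link is broken; for the least $i$ with $\gcd(z_{i-1},z_i)=1$ the initial segment $z_0,\ldots,z_{i-1}$ is an $\mathcal R$-chain, so $z_{i-1}\in\rho$ and $\mathsf c(H)\ge\mathsf d(z_{i-1},z_i)=\max\{|z_{i-1}|,|z_i|\}\ge|\rho|$. This gives $\mathsf c(H)\ge\mu(a)$ for every $a$, hence $\mathsf c(H)\ge\mu(H)$.

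The core is the reverse inequality $\mathsf c(a)\le\mu(H)$, which I would prove by Noetherian induction on $a$ along divisibility; for $a$ a unit or an atom $|\mathsf Z(a)|\le1$ and $\mathsf c(a)=0$. Given $z,z'\in\mathsf Z(a)$, assume the claim for all proper divisors of $a$ and distinguish three cases. \emph{(A)} If $g=\gcd(z,z')\ne1$, then $z/g$ and $z'/g$ are factorizations of the proper divisor $\pi(z/g)$ of $a$, so by induction they are joined by a $\mu(H)$-chain; multiplying it through by $g$ yields a $\mu(H)$-chain from $z$ to $z'$, since multiplying all entries by a fixed factorization leaves distances unchanged (because $\gcd(gx,gy)=g\gcd(x,y)$ in the free monoid $\mathsf Z(H)$). \emph{(B)} If $\gcd(z,z')=1$ but $z\approx z'$, apply (A) to each link of an $\mathcal R$-chain from $z$ to $z'$ and concatenate. \emph{(C)} If $z\not\approx z'$, then $|\mathcal R_a|>1$, so $\mu(a)$ is an upper bound for $|\sigma|$ over all $\mathcal R$-classes $\sigma$ of $\mathsf Z(a)$; choosing shortest representatives $z_0$ of the class of $z$ and $z_0'$ of the class of $z'$, parts (A)--(B) join $z$ to $z_0$ and $z'$ to $z_0'$ by $\mu(H)$-chains, while $\mathsf d(z_0,z_0')\le\max\{|z_0|,|z_0'|\}\le\mu(a)\le\mu(H)$ makes $z_0,z_0'$ a single admissible step. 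Concatenation completes the induction, and together with the lower bound this is part (1).

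For part (2) write $N$ for the supremum on the right; part (1) gives $N\le\mathsf c(H)$ at once. For $\mathsf c(H)\le N$ I would rerun the induction with $N$ in place of $\mu(H)$: (A) and (B) are unchanged, and in (C) we have $\mu(a)\le N$ as soon as $\mathcal A_a(\sim_H)\ne\emptyset$, so only the case $z\not\approx z'$, $\gcd(z,z')=1$, $\mathcal A_a(\sim_H)=\emptyset$ needs extra care. Here I use that $\sim_H$ is Krull, hence atomic, and factor $(z,z')=(u_1,u_1')\cdots(u_k,u_k')$ into atoms of $\sim_H$; since $z\ne z'$ and $\mathcal A_a(\sim_H)=\emptyset$, no factor can satisfy $\pi(u_j)=a$ (that would force $(z,z')=(u_j,u_j')\in\mathcal A_a(\sim_H)$), so each $\pi(u_j)$ is a proper divisor of $a$. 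Swapping coordinates one atom at a time produces a chain from $z$ to $z'$ whose $j$-th step has the form $c\,u_j\mapsto c\,u_j'$; replacing that step by an inductively given $N$-chain from $u_j$ to $u_j'$ multiplied through by $c$ turns the whole thing into an $N$-chain. Thus $\mathsf c(H)=N$. Finally, if $N<\infty$ it is attained: some $\mathsf c(a_0)=N$, so some pair in $\mathsf Z(a_0)$ admits no $(N-1)$-chain, and in the constructed $N$-chain for that pair some step has distance exactly $N$; as every step is either inherited from a sub-chain through the distance-preserving operation $w\mapsto cw$, or is one of the ``fresh'' single steps $z_0\mapsto z_0'$ of case (C) in the subcase $\mathcal A_a(\sim_H)\ne\emptyset$ (of value $\le\mu(a)\le N$), following such a distance-$N$ step down the well-founded recursion delivers a divisor $a'$ of $a_0$ with $\mathcal A_{a'}(\sim_H)\ne\emptyset$, $|\mathcal R_{a'}|>1$ and $\mu(a')=N$. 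Hence the supremum is a maximum.

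The principal obstacle is precisely case (C) and its variant in part (2): removing common divisors, rerouting through shortest class representatives, and splitting $(z,z')$ inside $\sim_H$ all pass to \emph{proper divisors} of $a$ without shortening factorizations, so an induction on factorization length (as attempted in \cite{phil10}) breaks down and one is forced to induct along the divisor order, which is well-founded exactly under ACCP. A minor additional difficulty is the final bookkeeping showing that the supremum in (2) is really attained.
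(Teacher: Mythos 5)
Your argument for part (1) is, up to reorganization, the paper's own: the same lower-bound argument via the first broken link of a $\mathsf c(H)$-chain starting at a length-minimal representative of an $\mathcal R$-class, and the same upper bound by induction along divisibility (which ACCP makes well-founded), splitting off common divisors, treating $\mathcal R$-chains link by link, and jumping between length-minimal class representatives at cost at most $\mu(a)$. Where you diverge is part (2). The paper does not rerun the induction at all: it simply observes that the two index sets coincide, i.e.\ that $|\mathcal R_a|>1$ already forces $\mathcal A_a(\sim_H)\neq\emptyset$, because if $z,z'$ are length-minimal in distinct $\mathcal R$-classes then $(z,z')$ must be an atom of $\sim_H$ (a factorization $(z,z')=(x_1,y_1)\cdots(x_k,y_k)$ with $k\geq 2$ would yield the $\mathcal R$-chain $z_i=z_{i-1}x_i^{-1}y_i$ and hence $z\approx z'$). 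This is exactly the mechanism you invoke in your subcase $\mathcal A_a(\sim_H)=\emptyset$, $z\not\approx z'$ --- which the paper's observation shows is \emph{vacuous}: your coordinate-swapping chain has nontrivial consecutive gcds, so it would already witness $z\approx z'$. Your treatment of that subcase is therefore harmless but superfluous, and the parenthetical ``that would force $(z,z')=(u_j,u_j')$'' is not the right justification (the contradiction is simply that $(u_j,u_j')\in\mathcal A_a(\sim_H)$ would be nonempty). Likewise, the final bookkeeping for attainment of the maximum is unnecessary: the quantities $\mu(a)$ are non-negative integers, so a finite supremum over a nonempty index set is automatically attained (and the empty case is covered by the paper's convention $\sup\emptyset=0$). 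What your route buys is a self-contained derivation of part (2) that does not separately isolate the set equality; what the paper's route buys is brevity and the reusable fact, echoed in Lemma~\ref{3.1}, that minimal representatives of distinct classes give atoms of the monoid of relations.
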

\begin{proof}
\mbox{}
\begin{enumerate}
\item First we prove
\[
\mathsf c(a)\geq\mu(a)\mbox{ for all }a\in H.
\]
Let $a\in H$ be such that $|\mathcal R_a|>1$. We may assume that $\mathsf c(a)<\infty$. Let $N\in\N_0$ be such that $\mu(a)\geq N$. Let $\rho\in\mathcal R_a$ be such that $|\rho|\geq N$ and $z\in\rho$ such that $|z|=|\rho|$. Let $z'\in\mathsf Z(a)$ be such that $z\not\approx z'$ and let $z=z_0,z_1,\ldots,z_k=z'$ be a $\mathsf c(a)$-chain concatenating $z$ and $z'$. Let $i\in[1,k]$ be minimal such that $z\not\approx z_i$. Then $z_{i-1}\not\approx z_i$, and therefore
\[
N\leq |z_0|\leq |z_{i-1}|\leq\mathsf d(z_i,z_{i-1})\leq\mathsf c(a).
\]
Till now we have $\mathsf c(H)\geq\mu(H)$. Next we show
\[
\mu(H)\geq\mathsf c(H).
\]
We show that, for all $a\in H$, we have $\mathsf c(a)\leq\mu(H)$. We proceed by induction on $a$. For $a=1$, this is trivial. Now suppose $a\neq 1$ and that, for all $b\in H$ with $b\mid a$, we have $\mathsf c(b)\leq\mu(H)$. Now let $z,\,z'\in\mathsf Z(a)$. If $z\not\approx z'$, then there are $z'',\,z'''\in\mathsf Z(a)$ such that $z''\approx z,\,z'''\approx z'$, and $z''$ and $z'''$ are minimal in their $\mathcal R$-classes with respect to their lengths. Since $\gcd(z'',z''')=1$, we find $\mathsf d(z'',z''')=\max\lbrace |z''|,|z'''|\rbrace\leq\mu(a)\leq\mu(H)$. Now it remains to show that, for any two factorizations $z,\,z'\in\mathsf Z(a)$ with $z\approx z'$, there is a $\mu(H)$-chain concatenating them. By definition, there is an $\mathcal R$-chain $z_0,\ldots,z_k$ with $z=z_0$ and $z'=z_k$, and $g_i=\gcd(z_{i-1},z_i)\neq 1$ for all $i\in [1,k]$. Since $\pi_H(g_i^{-1}z_{i-1})\mid a$, we find a $\mu(H)$-chain concatenating $g_i^{-1}z_{i-1}$ and $g_i^{-1}z_i$ for all $i\in [1,k]$ by induction hypothesis, and thus there is a $\mu(H)$-chain concatenating $z_{i-1}$ and $z_i$ for all $i\in [1,k]$; thus there is a $\mu(H)$-chain concatenating $z$ and $z'$. So $\mathsf c(a)\leq\mu(H)$.
\item When we compare the definitions, we see that the only thing remaining is
\[
\lbrace a\in H\mid\mathcal A_a(\sim_H)\neq\emptyset,\,|\mathcal R_a|>1\rbrace=\lbrace a\in H\mid|\mathcal R_a|>1\rbrace. 
\]
One inclusion is trivial and, for the other one, let $a\in H$ be such that $|\mathcal R_a|>1$, and let $z,\,z'\in\mathsf Z(a)$ be two factorizations of $a$ such that $z\not\approx z'$ and such that both are minimal in their $\mathcal R$-equivalence classes with respect to their lengths. Now assume $(z,z')\notin\mathcal A(\sim_H)$. Then there is $k\geq 2$ and $(x_1,y_1),\ldots,(x_k,y_k)\in\mathcal A(\sim_H)$ such that $(z,z')=(x_1,y_1)\cdot\ldots\cdot(x_k,y_k)$. But now we find the following $\mathcal R$-chain from $z$ to $z'$: $z_0=z$ and $z_i=z_{i-1}x_i^{-1}y_i$ for $i\in[1,k]$. Then $z_k=z'$ and $\gcd(z_{i-1},z_i)\neq 1$. Since this is a contradiction, we have $(z,z')\in\mathcal A(\sim_H)$, and thus $(z,z')\in\mathcal A_a(\sim_H)\neq\emptyset$.
\qedhere
\end{enumerate}
\end{proof}

\bigskip
\section{A characterization of the monotone catenary degree by monoids of relations}
\label{sec:cmon}
\bigskip

\begin{lemma}
\label{3.1}
Let $H$ be an atomic monoid, $a\in H$ and $x,\,y\in\mathsf Z(a)$.
\begin{enumerate}
\item \label{3.1.1} If $x\not\approx_{\mathrm{eq}} y$, then $(x,y)\in\mathcal A_a(\sim_{H,\mathrm{eq}})$.
\item \label{3.1.2} Let $k,\,l\in\mathsf L(a)$ be adjacent with $k<l$. If $\mathsf d(\mathsf Z_k(a),\mathsf Z_l(a))=l$, then $(x,y)\in\mathcal A_a(\sim_{H,\mathrm{mon}})$ for all $x\in\mathsf Z_k(a)$ and $y\in\mathsf Z_l(a)$.
\end{enumerate}
\end{lemma}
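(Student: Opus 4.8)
The plan is to prove both parts by a single device: suppose the pair in question is \emph{not} an atom of the relevant monoid of relations, write it as a product of two non-units of that monoid, and from such a splitting manufacture a three‑term $\mathcal R$-chain of the forbidden type (equal‑length for part~\ref{3.1.1}, monotone for part~\ref{3.1.2}) joining $x$ and $y$, contradicting the hypothesis. Two facts are used throughout: $\sim_{H,\mathrm{eq}}$ and $\sim_{H,\mathrm{mon}}$ are \emph{submonoids} of $\sim_H$ (so a non‑unit of either that is not an atom splits into two of its non‑units), and $\sim_H$, being a submonoid of a free monoid, is reduced, so its only unit is $(1,1)$; in particular a factor $(u,v)$ with $u=1$ (or $v=1$) and $\pi_H(u)=\pi_H(v)$ must be $(1,1)$. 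The one real obstacle is to avoid chasing a chain through a full factorization of $(x,y)$ into many atoms, where the running $\gcd$ may collapse; grouping into exactly two blocks, and then using the adjacency of $k$ and $l$ (resp. the equal‑length constraint) to pin down the single intermediate length, is what makes the contradiction immediate.

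For part~\ref{3.1.1} (which presupposes $|x|=|y|$, so that $(x,y)\in\sim_{H,\mathrm{eq}}$; this is the case in which it is applied), note that $x\not\approx_{\mathrm{eq}} y$ forces $x\neq y$, so $(x,y)$ is a non‑unit of $\sim_{H,\mathrm{eq}}$. If it were not an atom, write $(x,y)=(x_1,y_1)(x_2,y_2)$ with $(x_i,y_i)\in\sim_{H,\mathrm{eq}}$ non‑units, i.e. $x=x_1x_2$, $y=y_1y_2$, $\pi_H(x_i)=\pi_H(y_i)$ and $|x_i|=|y_i|$. Put $z_0=x_1x_2$, $z_1=y_1x_2$, $z_2=y_1y_2$. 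Then $z_0,z_1,z_2\in\mathsf Z(a)$ (using $\pi_H(x_i)=\pi_H(y_i)$), $|z_0|=|z_1|=|z_2|$ (using $|x_i|=|y_i|$), and $x_2\mid\gcd(z_0,z_1)$, $y_1\mid\gcd(z_1,z_2)$. Moreover $x_2\neq1$: otherwise $|y_2|=|x_2|=0$, so $y_2=1$ and $(x_2,y_2)$ would be a unit; likewise $y_1\neq1$. Hence $z_0,z_1,z_2$ is an equal‑length $\mathcal R$-chain from $x$ to $y$, so $x\approx_{\mathrm{eq}} y$, a contradiction. Thus $(x,y)\in\mathcal A(\sim_{H,\mathrm{eq}})\cap(\mathsf Z(a)\times\mathsf Z(a))=\mathcal A_a(\sim_{H,\mathrm{eq}})$.

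For part~\ref{3.1.2}, $|x|=k<l=|y|$ gives $(x,y)\in\sim_{H,\mathrm{mon}}$, a non‑unit. Since $l$ is the maximal value of $\mathsf d(u,v)$ for $u\in\mathsf Z_k(a)$, $v\in\mathsf Z_l(a)$, the hypothesis $\mathsf d(\mathsf Z_k(a),\mathsf Z_l(a))=l$ forces $\gcd(u,v)=1$ for \emph{all} such $u,v$. If $(x,y)$ were not an atom, write $(x,y)=(x_1,y_1)(x_2,y_2)$ with both factors non‑units of $\sim_{H,\mathrm{mon}}$, so $|x_i|\leq|y_i|$, and set $z_0=x_1x_2$, $z_1=y_1x_2$, $z_2=y_1y_2$ as before; all lie in $\mathsf Z(a)$, and $k=|z_0|\leq|z_1|\leq|z_2|=l$. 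As $k,l$ are adjacent, $|z_1|\in\{k,l\}$. If $|z_1|=l$, then $z_0\in\mathsf Z_k(a)$ and $z_1\in\mathsf Z_l(a)$, so $x_2\mid\gcd(z_0,z_1)=1$, whence $x_2=1$ and then $\pi_H(y_2)=\pi_H(x_2)=1$ gives $y_2=1$, so $(x_2,y_2)$ is a unit --- a contradiction. If $|z_1|=k$, then $z_1\in\mathsf Z_k(a)$ and $z_2\in\mathsf Z_l(a)$, so $y_1\mid\gcd(z_1,z_2)=1$, whence $y_1=1$ and then $|x_1|\leq|y_1|=0$ gives $x_1=1$, so $(x_1,y_1)$ is a unit --- again a contradiction. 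Hence $(x,y)\in\mathcal A(\sim_{H,\mathrm{mon}})\cap(\mathsf Z(a)\times\mathsf Z(a))=\mathcal A_a(\sim_{H,\mathrm{mon}})$, as desired.
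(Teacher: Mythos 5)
Your proof is correct and follows essentially the same route as the paper's: assume the pair is not an atom of the relevant relation monoid, split it, and turn the splitting into a forbidden equal-length (resp.\ monotone) $\mathcal R$-chain between $x$ and $y$, contradicting either the gcd condition or the adjacency of $k$ and $l$. The only cosmetic difference is that you group the decomposition into two non-unit blocks instead of a full atomic factorization, which lets you rule out the degenerate factors ($x_2=1$ or $y_1=1$) explicitly --- a point the paper leaves implicit.
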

\begin{proof}
Since the arithmetic of $H$ is determined solely by $H_{\mathrm{red}}$, we may assume that $H$ is reduced.
\begin{enumerate}
\item Let $a\in H$ and $x,\,y\in\mathsf Z(a)$ be such that $(x,y)\notin\mathcal A_a(\sim_{H,\mathrm{eq}})$. Then, trivially, $(x,y)\notin\mathcal A(\sim_{H,\mathrm{eq}})$ and thus there are $(x_1,y_1),\ldots,(x_k,y_k)\in\mathcal A(\sim_{H,\mathrm{eq}})$ with $k\geq 2$ such that $(x,y)=(x_1,y_1)\mdots(x_k,y_k)$. Then $x=x_1\mdots x_k,\,y_1x_2\mdots x_k,\,y_1\mdots y_k=y$ is an $\mathcal R_{\mathrm{eq}}$-chain concatenating $x$ and $y$, and therefore $x\equ y$.
\item Let $a\in H$, let $k,\,l\in\mathsf L(a)$ be adjacent with $k<l$ and $\mathsf d(Z_k(a),Z_l(a))=l$, and let $x\in\mathsf Z_k(a)$ and $y\in\mathsf Z_l(a)$. Now suppose $(x,y)\notin\mathcal A_a(\sim_{H,\mathrm{mon}})$. Then, trivially, $(x,y)\notin\mathcal A(\sim_{H,\mathrm{mon}})$ and there are $(x_1,y_1),\ldots,(x_k,y_k)\in\mathcal A(\sim_{H,\mathrm{mon}})$ with $k\geq 2$ and $|y_1|-|x_1|\leq\ldots\leq |y_k|-|x_k|$. Then we set $x'=x_1^{-1}y_1x$. If $|y_1|-|x_1|=0$, we find $|x'|=k$ and $\gcd(x',y)\neq 1$, a contradiction to $\mathsf d(\mathsf Z_k(a),\mathsf Z_l(a))=l$. Otherwise, if $|y_1|-|x_1|>0$, then $k=|x|<|x'|<|y|=l$, a contradiction to $k$ and $l$ being adjacent.
\qedhere
\end{enumerate}
\end{proof}

In principal, we follow the same strategy as in \cite[Section 3]{phil10} for the $\mu$-invariant when studying the $\mueq$-invariant. For the $\muadj$-invariant, we cannot construct an equivalence relation like the $\mathcal R$-relation or the $\Req$-relation. Thus we follow a slightly modified strategy in the proofs of parts \ref{3.2.3} and \ref{3.2.4} from \autoref{3.2}.

\begin{theorem}
\label{3.2}
Let $H$ be an atomic monoid. Then
\begin{enumerate}
\item \label{3.2.1} $\ceq (a)\geq\mueq(a)$ for all $a\in H$, and $\ceq(H)=\mueq(H)$.
\item \label{3.2.2} $\ceq (H)=\sup\lbrace\mueq (a)\mid a\in H,\,\mathcal A_a(\sim_{H,\mathrm{eq}})\neq\emptyset,\,|\mathcal R_{a,k}|>1\mbox{ for some }k\in\mathsf L(a)\rbrace\\
\mbox{ }\quad\quad\;\;=\sup\lbrace k\in\N\mid a\in H,\,\mathcal A_a(\sim_{H,\mathrm{eq}})\neq\emptyset,\,k\in\mathsf L(a),\,|\mathcal R_{a,k}|>1\rbrace$.
\item \label{3.2.3} $\cadj(a)\geq\muadj(a)$ for all $a\in H$, and $\cadj(H)=\muadj(H)$.
\item \label{3.2.4} $\cadj (H)=\sup\lbrace\muadj(a)\mid a\in H,\,\mathcal A_a(\sim_{H,\mathrm{mon}})\neq\emptyset\rbrace$.
\end{enumerate}
In particular,
\begin{multline*}
\cmon(H)=\sup(\lbrace\mueq(a)\mid a\in H,\,\mathcal A_a(\sim_{H,\mathrm{eq}}),\,|\mathcal R_{a,k}|>1\mbox{ for some }k\in\mathsf L(a)\rbrace \\ \cup\lbrace\muadj(a)\mid a\in H,\,\mathcal A_a(\sim_{H,\mathrm{mon}})\rbrace).
\end{multline*}
\end{theorem}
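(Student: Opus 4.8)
The plan is to follow the two-step scheme of \cite[Section 3]{phil10} (and of the proof of \autoref{2.7}): first establish parts \ref{3.2.1} and \ref{3.2.3} by a two-sided estimate (the pointwise bounds $\ceq(a)\geq\mueq(a)$, resp.\ $\cadj(a)\geq\muadj(a)$, together with the global bounds $\ceq(H)\leq\mueq(H)$, resp.\ $\cadj(H)\leq\muadj(H)$); then deduce parts \ref{3.2.2} and \ref{3.2.4} by invoking \autoref{3.1} to show that the conditions $\mathcal A_a(\sim_{H,\mathrm{eq}})\neq\emptyset$, resp.\ $\mathcal A_a(\sim_{H,\mathrm{mon}})\neq\emptyset$, are automatically fulfilled whenever the element $a$ actually contributes to $\mueq(H)$, resp.\ $\muadj(H)$; finally, the ``in particular'' statement is read off from the identity $\cmon(H)=\sup\lbrace\ceq(H),\cadj(H)\rbrace$ recalled above. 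As in \autoref{2.7} one may assume $H$ reduced, but, in contrast to that lemma, no chain condition on $H$ will be needed: the only induction, in the proof of part \ref{3.2.1}, runs on the \emph{length} of a factorization rather than on its element.

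For part \ref{3.2.1}, the inequality $\ceq(a)\geq\mueq(a)$ is obtained by the argument that gives $\mathsf c(a)\geq\mu(a)$ in \autoref{2.7}, with $\approx$ replaced by $\equ$: assuming $\ceq(a)<\infty$ and fixing $k\in\mathsf L(a)$ with $|\mathcal R_{a,k}|>1$, pick $z\not\equ z'$ in $\mathsf Z_k(a)$, run an equal-length $\ceq(a)$-chain from $z$ to $z'$, and take the first consecutive pair $z_{i-1},z_i$ with $z\not\equ z_i$; then $z_{i-1}\not\equ z_i$ with $|z_{i-1}|=|z_i|=k$, whence $\gcd(z_{i-1},z_i)=1$ (a non-trivial common factor would already be a one-step equal-length $\mathcal R$-chain), so $\ceq(a)\geq\mathsf d(z_{i-1},z_i)=k$; taking the supremum over such $k$ gives $\ceq(a)\geq\mueq(a)$, hence $\ceq(H)\geq\mueq(H)$. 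For the converse I would prove, by induction on $k\in\N_0$, that for every $a\in H$ and all $z,z'\in\mathsf Z_k(a)$ there is an equal-length $\mueq(H)$-chain from $z$ to $z'$. The cases $k\leq1$ are trivial, since then $|\mathsf Z_k(a)|\leq1$. For $k\geq2$: if $z\not\equ z'$, then $z$ and $z'$ lie in distinct $\equ$-classes of $\mathsf Z_k(a)$, so $k\leq\mueq(a)\leq\mueq(H)$, and as $\mathsf d(z,z')\leq k$ the pair $z,z'$ is itself an equal-length $\mueq(H)$-chain; if $z\equ z'$, pick an equal-length $\mathcal R$-chain $z=z_0,\ldots,z_n=z'$ in $\mathsf Z_k(a)$, put $g_i=\gcd(z_{i-1},z_i)\neq1$, apply the induction hypothesis to the two factorizations $g_i^{-1}z_{i-1}$, $g_i^{-1}z_i$ of $\pi_H(g_i^{-1}z_{i-1})$ (which have the strictly smaller common length $k-|g_i|$), and multiply the resulting chain back by $g_i$ to obtain an equal-length $\mueq(H)$-chain from $z_{i-1}$ to $z_i$ inside $\mathsf Z_k(a)$; concatenating over $i$ finishes the step. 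This yields $\ceq(a)\leq\mueq(H)$ for every $a$, so $\ceq(H)=\mueq(H)$.

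Part \ref{3.2.3} is the only place where I expect a genuine idea to be needed. The inequality $\cadj(a)\geq\muadj(a)$ is immediate from the definitions. For $\cadj(H)\leq\muadj(H)$, fix $a\in H$ and adjacent lengths $m<n$ in $\mathsf L(a)$; since $\mathsf Z_n(a)$ and $\mathsf Z_m(a)$ are non-empty, the distance $d=\mathsf d(\mathsf Z_n(a),\mathsf Z_m(a))$ is attained, say at $x\in\mathsf Z_n(a)$, $y\in\mathsf Z_m(a)$, and with $g=\gcd(x,y)$ one has $d=n-|g|$. If $g=1$, then $d=n$, and since $m$ is the predecessor of $n$ in $\mathsf L(a)$ (the largest length below $n$) with $\mathsf d(\mathsf Z_n(a),\mathsf Z_m(a))=n$, the length $n$ contributes to $\muadj(a)$, so $d=n\leq\muadj(H)$. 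If $g\neq1$, pass to $b=\pi_H(g^{-1}x)$ together with the coprime factorizations $g^{-1}x\in\mathsf Z_{n-|g|}(b)$ and $g^{-1}y\in\mathsf Z_{m-|g|}(b)$; the crux is then the following pair of observations, both obtained by multiplying factorizations of $b$ by $g$. First, $m-|g|$ is the predecessor of $n-|g|$ in $\mathsf L(b)$: a length $l'\in\mathsf L(b)$ with $m-|g|<l'<n-|g|$ would give $|g|+l'\in\mathsf L(a)$ strictly between $m$ and $n$, contradicting the adjacency of $m$ and $n$. Second, $\mathsf d(\mathsf Z_{n-|g|}(b),\mathsf Z_{m-|g|}(b))=n-|g|$: a non-coprime pair $u\in\mathsf Z_{n-|g|}(b)$, $v\in\mathsf Z_{m-|g|}(b)$ would yield $gu\in\mathsf Z_n(a)$, $gv\in\mathsf Z_m(a)$ with $\mathsf d(gu,gv)<n-|g|=d$, contradicting the minimality of $d$. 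Together, these two observations show that $n-|g|$ contributes to $\muadj(b)$, so $d=n-|g|\leq\muadj(H)$. In either case $\mathsf d(\mathsf Z_n(a),\mathsf Z_m(a))\leq\muadj(H)$; taking suprema gives $\cadj(H)\leq\muadj(H)$, hence $\cadj(H)=\muadj(H)$.

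Finally, parts \ref{3.2.2} and \ref{3.2.4} follow from \ref{3.2.1} and \ref{3.2.3} once one checks, via \autoref{3.1}, that the atom conditions are automatic. If $|\mathcal R_{a,k}|>1$ for some $k\in\mathsf L(a)$, pick $z\not\equ z'$ in $\mathsf Z_k(a)$ (so $|z|=|z'|$); then part~\ref{3.1.1} of \autoref{3.1} gives $(z,z')\in\mathcal A_a(\sim_{H,\mathrm{eq}})$, so this set is non-empty. If a length $k$ contributes to $\muadj(a)$, so that $\mathsf d(\mathsf Z_k(a),\mathsf Z_l(a))=k$ for the predecessor $l$ of $k$ in $\mathsf L(a)$, then part~\ref{3.1.2} of \autoref{3.1}, applied with the roles of $k$ and $l$ interchanged, gives $\mathsf Z_l(a)\times\mathsf Z_k(a)\subset\mathcal A_a(\sim_{H,\mathrm{mon}})$, so again this set is non-empty. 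Feeding these into the definitions of $\mueq(a)$ and $\muadj(a)$ and taking suprema yields both displayed formulas for $\ceq(H)$ in \ref{3.2.2} and the formula for $\cadj(H)$ in \ref{3.2.4}; combining these with $\cmon(H)=\sup\lbrace\ceq(H),\cadj(H)\rbrace$ gives the concluding ``in particular'' statement.
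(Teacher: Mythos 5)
Your proposal is correct, and for parts \ref{3.2.1}, \ref{3.2.2}, and \ref{3.2.4} it follows the paper's argument essentially verbatim (two-sided estimate with induction on the common length for \ref{3.2.1}, then \autoref{3.1} to see that the atom conditions are automatic). The one place you genuinely depart from the paper is the inequality $\cadj(H)\leq\muadj(H)$ in part \ref{3.2.3}: the paper argues by contradiction, choosing $l\in\N$ minimal such that $\cadj(a)=\mathsf d(\mathsf Z_k(a),\mathsf Z_l(a))$ for some $a$ and some $k<l$ adjacent, and then performs the gcd-division step to descend to a smaller $l$; you instead bound $\mathsf d(\mathsf Z_n(a),\mathsf Z_m(a))$ \emph{directly} for every pair of adjacent lengths, by dividing out the gcd of a distance-realizing pair once and verifying that the resulting adjacent pair of lengths of $b=\pi_H(g^{-1}x)$ witnesses $n-|g|\leq\muadj(b)$. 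The underlying reduction (pass to $b$, check that $m-|g|,n-|g|$ stay adjacent and that the reduced distance is $n-|g|$) is the same in both proofs, but your organization buys something: it does not require the supremum defining $\cadj(a)$ to be attained, it does not need a minimal counterexample, and it handles explicitly the case $\gcd=1$ (i.e.\ $\mathsf d(\mathsf Z_k(a),\mathsf Z_l(a))=l$), which the paper's contradiction argument leaves implicit. So your version of part \ref{3.2.3} is a cleaner and slightly more careful rendering of the same idea.
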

\begin{proof}
Since the arithmetic of $H$ is determined solely by $H_{\mathrm{red}}$ we may assume that $H$ is reduced.
\begin{enumerate}
\item First we prove $\ceq(a)\geq\mueq(a)$ for all $a\in H$. We may assume that $\ceq(a)<\infty$ and $\mueq(a)\geq 1$. Let $N\in\N$ be such that $N\leq\mueq(a)$. Then there exists $k\in\mathsf L(a)$ such that $|\mathcal R_{a,k}|>1$ and $k\geq N$. Let $z,\,z'\in\mathsf Z_k(a)$ be such that $z\not\equ z'$, and let $z=z_0,z_1,\ldots,z_n=z'$ be a $\ceq(a)$-equal-length chain concatenating $z$ and $z'$. Now we choose $i\in[0,n-1]$ minimal such that $z\not\equ z_i$. Then $z_{i-1}\not\equ z_i$, and we find
\[
\ceq(a)\geq\mathsf d(z_{i-1},z_i)=k\geq N.
\]
Now we prove $\mueq (H)\geq\ceq (H)$. We show that, for all $N\in\N_0$, all $a\in H$, and all factorizations $z,\,z'\in\mathsf Z(a)$ with $|z|=|z'|\leq N$, there is a $\mueq (H)$-equal-length-chain from $z$ to $z'$. We proceed by induction on $N$. If $N=0$, then $z=z'=1$ and $\mathsf d(z,z')=0\leq\mueq (H)$. Suppose $N\geq 1$ and that, for all $a\in H$ and all $z,\,z'\in\mathsf Z(a)$ with $|z|=|z'|<N$, there is a $\mueq (H)$-equal-length-chain from $z$ to $z'$. Now let $a\in H$ and let $z,\,z'\in\mathsf Z(a)$ with $|z|=|z'|\leq N$. If $z\not\equ z'$, then $\mueq (H)\geq\mueq (a)\geq |z|=\mathsf d(z,z')$. Now it remains to show that, for any two factorizations $z,\,z'\in\mathsf Z(a)$ with $|z|=|z'|\leq N$ and $z\equ z'$, there is a $\mueq (H)$-equal-length-chain concatenating them. By definition, there is an $\Req$-chain $z_0,\ldots,z_k$ with $z_0=z$, $z'=z_k$, $g_i=\gcd(z_{i-1},z_i)\neq 1$, and $|z_i|=|z|$ for all $i\in[1,k]$. By induction hypothesis, there is a $\mueq (H)$-equal-length-chain from $g_i^{-1}z_{i-1}$ to $g_i^{-1}z_i$ for all $i\in[1,k]$, and hence there is a $\mueq (H)$-equal-length-chain from $z_{i-1}$ to $z_i$ for $i\in[1,k]$; thus there is a $\mueq (H)$-equal-length chain from $z$ to $z'$.
\item By part~\ref{3.2.1}, we have $\ceq(H)=\mueq(H)$ and, by \autoref{def:muinv}.\ref{def:mueq}, the third equality is obvious. Thus it suffices to show that
\begin{multline*}
\lbrace\mueq (a)\mid a\in H,\,|\mathcal R_{a,k}|>1\mbox{ for some }k\in\mathsf L(a)\rbrace=\\
\lbrace\mueq (a)\mid a\in H,\,\mathcal A_a(\sim_{H,\mathrm{eq}})\neq\emptyset,\,|\mathcal R_{a,k}|>1\mbox{ for some }k\in\mathsf L(a)\rbrace.
\end{multline*}
The inclusion from right to left is clear. Now let $a\in H$ and $k\in\mathsf L(a)$ be such that $|\mathcal R_{a,k}|>1$. Then there exist $z,\,z'\in\mathsf Z_k(a)$ such that $z\not\equ z'$. By \autoref{3.1}.\ref{3.1.1}, we find $(z,z')\in\mathcal A_a(\sim_{H,\mathrm{eq}})\neq\emptyset$.
\item First let $a\in H$. We show that $\cadj (a)\geq\muadj(a)$, and then $\cadj(H)\geq\muadj(H)$ follows by passing to the supremum on both sides. If $\muadj (a)=0$ or $\muadj (a)=\infty$, this is trivial. Now let $\muadj (a)=l\in\N$. Then there is $k\in\mathsf L(a)$ and $k<l$ with $l$ adjacent to $k$. Then, by \autoref{def:cadj}, $\cadj (a)\geq\mathsf d(\mathsf Z_k(a),\mathsf Z_l(a))=\max\lbrace k,l\rbrace=l=\muadj(a)$.\\
Now we prove $\muadj(H)\geq\cadj(H)$. We must prove that $\cadj(a)\leq\muadj(H)$ for all $a\in H$. Assume to the contrary that there is some $a\in H$ such that $\cadj(a)>\muadj(H)$. Let $l\in\N$ be minimal such that there is some $k<l$ and $a\in H$ with $k$ and $l$ adjacent lengths of $a$ and $\cadj(a)=\mathsf d(\mathsf Z_k(a),\mathsf Z_l(a))$. If $\mathsf d(\mathsf Z_k(a),\mathsf Z_l(a))<l$, then there are some $x\in\mathsf Z_k(a)$ and $y\in\mathsf Z_l(a)$ such that $g=\gcd(x,y)\neq 1$. If $b=\pi_H(g^{-1}x)$, then $k-|g|$ and $l-|g|$ are adjacent lengths of $b$ and
\[
\cadj(a)=\mathsf d(\mathsf Z_k(a),\mathsf Z_l(a))\leq\mathsf d(\mathsf Z_{k-|g|}(b),\mathsf Z_{l-|g|}(b))\leq\cadj(b),
\]
and by the minimal choice of $l$, we infer that $\cadj(b)\leq\muadj(H)$, a contradiction.
\item By part~\ref{3.2.3} and \autoref{def:muinv}.\ref{def:muadj}, we find
\[
\cadj (H)=\muadj (H)=\sup\lbrace\muadj (a)\mid a\in H\rbrace.
\]
Thus it suffices to show that
\[
\sup\lbrace\muadj(a)\mid a\in H\rbrace=\sup\lbrace\muadj(a)\mid a\in H,\,\mathcal A_a(\sim_{H,\mathrm{mon}})\rbrace.
\]
In fact, we only have to show that $\sup\lbrace\muadj(a)\mid a\in H\rbrace\leq\sup\lbrace\muadj(a)\mid a\in H,\,\mathcal A_a(\sim_{H,\mathrm{mon}})\rbrace$. Now let $a\in H$ and $\muadj(a)=k\in\N$. Then there is $l\in\mathsf L(a)$ with $l<k$, $l$ adjacent to $k$, and $\mathsf d(\mathsf Z_k(a),\mathsf Z_l(a))=k$. Now let $x\in\mathsf Z_l(a)$ and $y\in\mathsf Z_k(a)$. Then we have $\gcd(x,y)=1$. By \autoref{3.1}.\ref{3.1.2}, we have $(x,y)\in\mathcal A_a(\sim_{H,\mathrm{mon}})\neq\emptyset$.
\end{enumerate}
The additional statement now follows easily by parts~\ref{3.2.2} and \ref{3.2.4}.
\end{proof}

\begin{lemma}
\label{3.3}
Let $H$ be an atomic monoid and let $a\in H$.
\begin{enumerate}
\item \label{3.3.1} Let $x,\,y\in\mathsf Z(a)$ with $\min\lbrace |x|,|y|\rbrace>\cmon(a)$. Then there is a monotone $\mathcal R$-chain concatenating $x$ and $y$, and thus $x\approx y$; in particular, if $|x|=|y|$, then $x\approx_{\mathrm{eq}} y$.
\item \label{3.3.2} Let $k,\,l\in\mathsf L(a)$. Then
\[
\mathsf d(\mathsf Z_k(a),\mathsf Z_l(a))=\max\lbrace k,l\rbrace
\quad\mbox{if and only if}\quad
\gcd(x,y)=1\mbox{ for all }x\in\mathsf Z_k(a)\mbox{ and }y\in\mathsf Z_l(a).
\]
\item \label{3.3.3} Let $k,\,l\in\mathsf L(a)$ be adjacent with $k<l$ such that there are $x\in\mathsf Z_k(a)$ and $y\in\mathsf Z_l(a)$ such that there is a monotone $\mathcal R$-chain concatenating $x$ and $y$. Then $\muadj(a)\neq 1$.
\end{enumerate}
\end{lemma}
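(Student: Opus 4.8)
The plan is to handle the three parts in turn, each time relying on the elementary identity $\mathsf d(z,z')=\max\{|z|,|z'|\}-|\gcd(z,z')|$ for $z,z'\in\mathsf Z(H)$; in particular $\mathsf d(z,z')=\max\{|z|,|z'|\}$ exactly when $\gcd(z,z')=1$. As always we may replace $H$ by $H_{\mathrm{red}}$ and assume $H$ reduced.

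For part~\ref{3.3.1}, the hypothesis $\min\{|x|,|y|\}>\cmon(a)$ forces $\cmon(a)<\infty$, and after exchanging $x$ and $y$ we may assume $|x|\le|y|$. I would take a monotone $\cmon(a)$-chain $x=z_0,z_1,\dots,z_n=y$ and note that, since its lengths are non-decreasing, $|z_{i-1}|\ge|z_0|=|x|=\min\{|x|,|y|\}>\cmon(a)\ge\mathsf d(z_{i-1},z_i)$ for every $i$. If some $\gcd(z_{i-1},z_i)$ were trivial, the identity would give $\mathsf d(z_{i-1},z_i)=\max\{|z_{i-1}|,|z_i|\}\ge|z_{i-1}|>\cmon(a)$, a contradiction; hence every consecutive pair has nontrivial gcd, the chain is already a monotone $\mathcal R$-chain, and $x\approx y$. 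If moreover $|x|=|y|$, then $|x|=|z_0|\le\dots\le|z_n|=|y|=|x|$ forces all $|z_i|$ equal, so the same chain is an equal-length $\mathcal R$-chain and $x\approx_{\mathrm{eq}} y$.

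Part~\ref{3.3.2} is the identity applied twice. If every pair from $\mathsf Z_k(a)\times\mathsf Z_l(a)$ has trivial gcd, then each such $\mathsf d(x,y)=\max\{k,l\}$, hence so is their minimum $\mathsf d(\mathsf Z_k(a),\mathsf Z_l(a))$; conversely, if $\mathsf d(\mathsf Z_k(a),\mathsf Z_l(a))=\max\{k,l\}$, then for every such pair $\max\{k,l\}\le\mathsf d(x,y)=\max\{k,l\}-|\gcd(x,y)|\le\max\{k,l\}$, forcing $\gcd(x,y)=1$. For part~\ref{3.3.3}, I would start from a monotone $\mathcal R$-chain $x=z_0,\dots,z_n=y$ with $x\in\mathsf Z_k(a)$, $y\in\mathsf Z_l(a)$ and $k<l$ adjacent. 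The decisive point is that $k=|z_0|\le|z_i|\le|z_n|=l$ together with $|z_i|\in\mathsf L(a)$ and the adjacency of $k$ and $l$ pins every $|z_i|$ to $\{k,l\}$; as the lengths are non-decreasing and pass from $k$ to $l$, there is an index $j$ with $|z_{j-1}|=k$ and $|z_j|=l$. Since $\gcd(z_{j-1},z_j)\ne 1$, the identity gives $\mathsf d(\mathsf Z_k(a),\mathsf Z_l(a))\le\mathsf d(z_{j-1},z_j)=l-|\gcd(z_{j-1},z_j)|<l=\max\{k,l\}$, so by part~\ref{3.3.2} the length $l$ is not among the lengths over which the supremum defining $\muadj(a)$ is taken, which yields the claim.

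I do not anticipate a real obstacle: the arguments are short once the distance identity is in hand. The two points that need a little care are the observation in part~\ref{3.3.1} that a monotone chain whose endpoints have the same length is automatically of equal length, and, in part~\ref{3.3.3}, the use of adjacency to confine all intermediate lengths of the monotone $\mathcal R$-chain to $\{k,l\}$, which is precisely what lets one isolate the single step at which the length increases.
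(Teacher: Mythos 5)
Your proposal is correct and follows essentially the same route as the paper: in part~\ref{3.3.1} you upgrade a monotone $\cmon(a)$-chain to a monotone $\mathcal R$-chain because every step has distance below the lengths involved; part~\ref{3.3.2} is the distance identity; and in part~\ref{3.3.3} you isolate the step where the length jumps from $k$ to $l$ (the paper picks the minimal index $i$ with $|z_i|=l$) to conclude $\mathsf d(\mathsf Z_k(a),\mathsf Z_l(a))<l$ and hence $\muadj(a)\neq l$, which is indeed the intended reading of the statement's ``$\muadj(a)\neq 1$''.
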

\begin{proof}
Since the arithmetic of $H$ is determined solely by $H_{\mathrm{red}}$, we may assume that $H$ is reduced.
\begin{enumerate}
\item Let $a\in H$ and $x,\,y\in\mathsf Z(a)$ be such that $\min\lbrace |x|,|y|\rbrace>\cmon(a)$. We may assume that $|x|\leq |y|$. Then there is a monotone $\cmon(a)$-chain concatenating $x$ and $y$, say $z_0=x,z_1,\ldots,z_k=y$. Since, for all $i\in[1,k]$, we have $\mathsf d(z_{i-1},z_i)\leq\cmon(a)<|x|=|z_0|$, we have $\gcd(z_{i-1},z_i)\neq 1$ for all $i\in[1,k]$. Thus $z_0,\ldots,z_k$ is a monotone $\mathcal R$-chain concatenating $x$ and $y$, and therefore $x\approx y$. If $|x|=|y|$, then $z_0,\ldots,z_k$ is an equal-length chain, and therefore $x\equ y$.
\item Follows immediately by the definition of the distance of factorizations in $\mathsf Z(H)$.
\item Let $a\in H$, let $k,\,l\in\mathsf L(a)$ be adjacent with $k<l$, let $x\in\mathsf Z_k(a)$, and $y\in\mathsf Z_l(a)$ be such that there is a monotone $\mathcal R$-chain from $x$ to $y$, say $z_0=x,z_1,\ldots,z_n=y$ for some $n\in\N$. Now choose $i\in[1,n]$ minimal such that $|z_i|=l$. Due to the minimality of $i$, we find $z_{i-1}\in\mathsf Z_k(a)$. Since $\gcd(z_{i-1},z_i)\neq 1$, we find $\mathsf d(\mathsf Z_k(a),\mathsf Z_l(a))<l$, and therefore $\muadj(a)\neq l$.
\qedhere
\end{enumerate}
\end{proof}

\begin{lemma}
\label{3.4}
Let $H$ be an atomic monoid. Then
\begin{enumerate}
\item\label{3.4.1} $\ceq(H)\leq\sup\lbrace |y|\mid(x,y)\in\mathcal A(\sim_{H,\mathrm{eq}}),\,x\not\equ y\rbrace$.
\item\label{3.4.2} $\cadj(H)\leq\sup\lbrace |y|\mid(x,y)\in\mathcal A(\sim_{H,\mathrm{mon}}),\,|x|<|y|,\;|x|,|y|\in\mathsf L(\pi_H(x))\mbox{ adjacent, and there is no}$ $\mbox{monotone $\mathcal R$-chain from $x$ to $y$}\rbrace$.
\end{enumerate}
In particular,
\begin{multline*}
\cmon(H)\leq\sup \lbrace |y|\mid(x,y)\in\mathcal A(\sim_{H,\mathrm{mon}}),\mbox{ there is no monotone $\mathcal R$-chain from }x\mbox{ to }y,\\
\mbox{ and either }|x|=|y|\mbox{ or }|x|,\,|y|\in\mathsf L(\pi_H(x))\mbox{ are adjacent}\rbrace.
\end{multline*}
\end{lemma}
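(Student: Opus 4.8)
The plan is to deduce \ref{3.4.1} and \ref{3.4.2} from the lower-bound characterizations in \autoref{3.2} together with the atom-production statements \autoref{3.1}.\ref{3.1.1} and \autoref{3.1}.\ref{3.1.2}, and then to obtain the concluding inequality from these two parts and the identity $\cmon(H)=\sup\{\ceq(H),\cadj(H)\}$ recalled after \autoref{def:cadj}; as usual we may assume $H$ is reduced. For \ref{3.4.1}: by the last equality in \autoref{3.2}.\ref{3.2.2} it suffices to show that, whenever $a\in H$, $k\in\mathsf L(a)$ and $|\mathcal R_{a,k}|>1$, the number $k$ occurs as $|y|$ for some $(x,y)\in\mathcal A(\sim_{H,\mathrm{eq}})$ with $x\not\equ y$. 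Pick $z,z'\in\mathsf Z_k(a)$ with $z\not\equ z'$; then $|z'|=k$, and \autoref{3.1}.\ref{3.1.1} gives $(z,z')\in\mathcal A_a(\sim_{H,\mathrm{eq}})\subset\mathcal A(\sim_{H,\mathrm{eq}})$. Taking the supremum over all such $a,k$ yields \ref{3.4.1}.

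For \ref{3.4.2}: by \autoref{3.2}.\ref{3.2.3} we have $\cadj(H)=\muadj(H)=\sup\{\muadj(a)\mid a\in H\}$, so it is enough to bound each $\muadj(a)$ by the right-hand side $M$ of \ref{3.4.2}. Fix $a$, and let $k\in\mathsf L(a)$ be any length having an adjacent $l\in\mathsf L(a)$ with $l<k$ and $\mathsf d(\mathsf Z_l(a),\mathsf Z_k(a))=k$ (the supremum of all such $k$ being $\muadj(a)$; there is nothing to prove if no such $k$ exists). Choose $x\in\mathsf Z_l(a)$ and $y\in\mathsf Z_k(a)$. Then $\pi_H(x)=\pi_H(y)=a$, the lengths $|x|=l<k=|y|$ are adjacent in $\mathsf L(\pi_H(x))$, and by \autoref{3.1}.\ref{3.1.2}, applied with the roles of the two lengths interchanged, $(x,y)\in\mathcal A_a(\sim_{H,\mathrm{mon}})\subset\mathcal A(\sim_{H,\mathrm{mon}})$. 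Finally, there is no monotone $\mathcal R$-chain from $x$ to $y$: every factorization in such a chain would have length in $[l,k]\cap\mathsf L(a)=\{l,k\}$, so the chain would contain consecutive $z_{j-1}\in\mathsf Z_l(a)$ and $z_j\in\mathsf Z_k(a)$ with $\gcd(z_{j-1},z_j)\neq 1$, contradicting $\mathsf d(\mathsf Z_l(a),\mathsf Z_k(a))=k$ by \autoref{3.3}.\ref{3.3.2} (this is exactly the computation used in the proof of \autoref{3.3}.\ref{3.3.3}). Hence $k=|y|\le M$, and passing to the supremum proves \ref{3.4.2}.

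For the ``in particular'' statement, write $C$ for the set in the final display. Since $\cmon(H)=\sup\{\ceq(H),\cadj(H)\}$, it suffices to see that every $|y|$-value occurring in the set of \ref{3.4.1} and every one occurring in the set of \ref{3.4.2} also occurs in $C$. For \ref{3.4.2} this is immediate, since its defining conditions are a special case of those defining $C$. For \ref{3.4.1}: if $(x,y)\in\mathcal A(\sim_{H,\mathrm{eq}})$ with $x\not\equ y$, then $|x|=|y|$; moreover $\sim_{H,\mathrm{eq}}\subset\sim_{H,\mathrm{mon}}$ and a length count on any factorization $(x,y)=(x_1,y_1)(x_2,y_2)$ in $\sim_{H,\mathrm{mon}}$ forces $|x_i|=|y_i|$ for $i=1,2$, so $(x,y)\in\mathcal A(\sim_{H,\mathrm{mon}})$; and any monotone $\mathcal R$-chain from $x$ to $y$ would, since $|x|=|y|$, have all lengths equal to $|x|$, hence be an equal-length $\mathcal R$-chain, contradicting $x\not\equ y$. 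Thus $|y|$ lies in $C$, and $\cmon(H)\le\sup C$.

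The step that needs the most care is \ref{3.4.2}: \autoref{3.1}.\ref{3.1.2} and \autoref{3.3}.\ref{3.3.3} are stated with the larger length denoted $l$, so the roles of the lengths are swapped relative to \autoref{def:muadj}, and the ``no monotone $\mathcal R$-chain'' clause must be established uniformly for every eligible length $k$ (so that the case $\muadj(a)=\infty$ is covered as well), not merely for $k=\muadj(a)$.
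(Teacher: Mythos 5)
Your proof is correct and follows essentially the same route as the paper: part 1 via \autoref{3.2}.\ref{3.2.2} together with \autoref{3.1}.\ref{3.1.1}, part 2 via \autoref{3.2}.\ref{3.2.3}/\ref{3.2.4} together with \autoref{3.1}.\ref{3.1.2} and the chain argument of \autoref{3.3}.\ref{3.3.3}, and the final bound from $\cmon(H)=\sup\lbrace\ceq(H),\cadj(H)\rbrace$ and $\mathcal A(\sim_{H,\mathrm{eq}})\subset\mathcal A(\sim_{H,\mathrm{mon}})$. You merely spell out details the paper leaves implicit (the uniform treatment of all eligible lengths $k$, and the verification that equal-length atoms with $x\not\equ y$ admit no monotone $\mathcal R$-chain), which is fine.
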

\begin{proof}
Since the arithmetic of $H$ is determined solely by $H_{\mathrm{red}}$, we may assume that $H$ is reduced.
\begin{enumerate}
\item The inequality $\ceq(H)\leq\sup\lbrace |y|\mid(x,y)\in\mathcal A(\sim_{H,\mathrm{eq}})$ has been proven in \cite[Proposition 4.4.3]{garger10}. The slightly stronger statement here follows immediately by the definition of $\mueq(\cdot)$; see \autoref{def:muinv}.\ref{def:mueq}.
\item By \autoref{3.2}.\ref{3.2.4}, we have $\cadj(H)=\sup\lbrace\muadj(a)\mid a\in H,\,\mathcal A_a(\sim_{H,\mathrm{mon}})\neq\emptyset\rbrace$. Now the assertion follows from \autoref{3.3}.\ref{3.3.3}, \autoref{3.1}.\ref{3.1.2}, and the definition of $\muadj(\cdot)$; see \autoref{def:muinv}.\ref{def:muadj}.
\end{enumerate}
The additional statement follows from
\[
\cmon(H)=\sup\lbrace\ceq(H),\cadj(H)\rbrace
\quad\mbox{and}\quad
\mathcal A(\sim_{H,\mathrm{eq}})\subset\mathcal A(\sim_{H,\mathrm{mon}}).
\qedhere
\]
\end{proof}

\bigskip
\section{Tameness and monotone chains}
\label{sec:mstep}
\bigskip

\begin{definition}
\label{def-tame}
Let $H$ be an atomic monoid.
\begin{enumerate}
\item For $a\in H$ and $x\in\mathsf Z(H)$, let $\mathsf t(a,x)$ denote the smallest $N\in\mathbb N_0\cup\lbrace\infty\rbrace$ with the following property:
\begin{itemize}
 \item[] If $\mathsf Z(a)\cap x\mathsf Z(H)\neq\emptyset$ and $z\in\mathsf Z(a)$, then there exists some $z'\in\mathsf Z(a)\cap x\mathsf Z(H)$ such that $\mathsf d(z,z')\leq N$.
\end{itemize}
For subsets $H'\subset H$ and $X\subset\mathsf Z(H)$, we define
\[
 \mathsf t(H',X)=\sup\lbrace\mathsf t(a,x)\mid a\in H',\,x\in X\rbrace,
\]
and we define $\mathsf t(H)=\mathsf t(H,\mathcal A(H_{\mathrm{red}}))$. This is called the \emph{tame degree} of $H$.
\item If $\mathsf t(H)<\infty$, then we call $H$ \emph{tame}.
\end{enumerate}
\end{definition}

Being tame is a very strong finiteness condition within non-unique factorization theory, in particular, the finiteness of the tame degree implies the finiteness of the elasticity and the catenary degree among other invariants. Next, we give a list of examples where tameness is characterized in various classes of monoids and domains; for a similar list, the reader is referred to \cite[Examples 3.2]{MR2660909}.

\begin{enumerate}
\item \emph{Finitely generated monoids}. If $H_{\mathrm{red}}$ is finitely generated, then $H$ is tame (see \cite[Theorem 3.1.4]{MR2194494}).
\item \emph{Finitely primary monoids}. Let $H$ be finitely primary of rank $s\in\N$. Then $H$ is tame if and only if $s=1$ (see \cite[Theorem 3.1.5]{MR2194494}).
\item \emph{Weakly Krull domains}. Let $R$ be a $v$-noetherian weakly Krull domain with nonzero conductor $\mathfrak f=(R:\widehat R)$ and finite $v$-class group $\mathcal C_v(R)$. Note that, in particular, orders in algebraic number fields fulfill all these properties.\\
Then $R$ is tame if and only if, for every nonzero prime ideal $\mathfrak p\in\mathfrak X(R)$ with $\mathfrak p\supset\mathfrak f$, there is precisely one $\mathfrak P\in\mathfrak X(\widehat R)$ such that $\mathfrak P\cap R=\mathfrak p$ (see \cite[Theorem 3.7.1]{MR2194494}).
\item \emph{Krull monoids} and therefore \emph{Krull domains}. Let $H$ be a Krull monoid, $F=\mathcal F(P)$ a monoid of divisors and $G_P=\lbrace [p]\mid p\in P\rbrace\subset F/H_{\mathrm{red}}=G$ the set of classes containing prime divisors. Suppose that one of the following conditions hold:
\begin{enumerate}
\item $H$ has the approximation property.
\item Every $g\in G_P$ contains at least two prime divisors.
\item There is an $m\in\N$ such that $-G_p\subset m(G_P\cup\lbrace 0\rbrace)$.
\item The torsion free rank of $G$ is finite.
\end{enumerate}
Then $H$ is tame if and only if $\mathsf D(G_P)<\infty$ (see \cite[Theorem 4.2]{MR2660909}). In particular, all principal orders in algebraic number fields are tame.
\item \emph{$C$-like monoids}. Let $H$ be a $C$-like monoid. Then $H$ is tame if and only if the natural map $s$-$\spec(\widehat H)\rightarrow s$-$\spec(H)$ is bijective (see \cite[Theorem 8.3]{Ka11} and \cite[Definition 5.6]{Ka11} for a precise definition of $C$-like monoids).\\
Next we give two examples of $C$-like monoids. $\punkt R$ is a $C$-like monoid if
\begin{itemize}
 \item (see \cite[Proposition 6.1]{Ka11}) $R$ is an integral domain and $\punkt R$ is finitely primary.
 \item (see \cite[Proposition 6.5]{Ka11}) $R$ is a Mori domain with complete integral closure $\widehat R$, $\mathcal C_v(\widehat R)$ is finite, $(R:\widehat R)\neq 0$, and either
\begin{itemize}
 \item  $R$ is semilocal, and $\widehat R/(R:\widehat R)$ is quasi artinian or
 \item $\mathcal C_v(R)$ is finite and $S^{-1}\widehat R/S^{-1}(R:\widehat R)$ is quasi artinian, where $S\subset\punkt R$ is the submonoid of regular elements.
\end{itemize}
\end{itemize}
\end{enumerate}

While the tameness of a monoid implies the finiteness of the catenary degree, it does not imply the finiteness of the equal catenary degree and therefore not the finiteness of the monotone catenary degree. In order to point this out, we discuss a monoid originally introduced in \cite[Example 4.5]{MR2387746}.

Recall that a monoid $H$ is called \emph{finitely primary} if there exist $s,\,k\in\N$ and a factorial monoid $F=[p_1,\ldots,p_s]\times\mal F$ with the following properties:
\begin{itemize}
 \item $H\setminus\mal H\subset p_1\mdots p_s F$ and
 \item $(p_1\mdots p_s)^kF\subset H$.
\end{itemize}
If this is the case, then we call $H$ a \emph{finitely primary monoid} of \emph{rank} $s$ and \emph{exponent} $k$.

\begin{example}[cf. {\cite[Example 4.5]{MR2387746}}]
\label{4.2}
There exists a tame monoid $H$ such that $\ceq(H)=\infty$, and thus $\cmon(H)=\infty$ but $\cadj(H)<\infty$.
\end{example}
\begin{proof}
We proceed in four steps.
\begin{enumerate}
 \item We start with a construction which was first used in \cite{phd-hassler}. Let $G$ be an additively written abelian group and $f:G\rightarrow N_0$ a map with $f(0)=0$ and finite image $f(G)$ such that, for all $g,\,g'\in G$, the following two conditions are satisfied:
\begin{enumerate}
 \item $f(g+g')\leq f(g)+f(g')$ and
 \item if $f(g)=0$, then $f(-g)=0$.
\end{enumerate}
 Then, by construction,
\[
 H(G,f)=\lbrace (g,k)\mid g\in G,\,k\in\N_0\,\mbox{with}\,k\geq f(g)\rbrace\subset(G\times\N_0,+)
\]
 is a finitely primary monoid of rank one and exponent $\max f(G)$.
 \item We consider a group $G$ with basis $E=\lbrace e_m,e_m'\mid m\in\N\rbrace$, where $\ord(e_m)=\ord(e'_m)=m$ for all $m\in\N$, whence
\[
 G=\bigoplus_{m\in\N}(\langle e_m\rangle\oplus\langle e'_m\rangle)=\bigoplus_{m\in\N}(\Z/m\Z)^2.
\]
 Let $f:G\rightarrow\N_0$ be defined by $f(0)=0$, $f(e)=1$ for all $e\in E$, and $f(g)=2$ for all $g\in G\setminus (E\cup\lbrace 0\rbrace)$. Then $f$ satisfies all properties required in part 1, and we study $H(G,f)=H$.
 \item Let $n\in\N$ and $a_n=(0,n)\in H$. Then $z_n=(e_n,1)+\ldots+(e_n,1)\in\mathsf Z_n(a_n)$, $z_n'=(e'_n,1)+\ldots+(e'_n,1)\in\mathsf Z_n(a_n)$, and we assert that, for every $z\in\mathsf Z_n(a_n)\setminus\lbrace z_n\rbrace$, we have $\mathsf d(z_n,z)=n$. Then we find
\[
 \cmon(H)\geq\ceq(H)\geq\ceq(a_n)\geq n,
\]
 whence $\cmon(H)=\ceq(H)=\infty$. Let $z\in\mathsf Z_n(a_n)\setminus\lbrace z_n\rbrace$. Then $z$ has the form $z=(g_1,1)+\ldots+(g_n,1)$ with $g_1,\ldots,g_n\in G$. Since $1\geq f(g_i)$ for every $i\in [1,n]$, it follows that $\lbrace g_1,\ldots,g_n\rbrace\subset E\cup\lbrace 0\rbrace$. If $e_n\in\lbrace g_1,\ldots,g_n\rbrace$, then $g_1=\ldots=g_n=e_n$, because $E$ is a basis. Since $z\neq z_n$, we infer that $e_n\notin\lbrace g_1,\ldots,g_n\rbrace$, whence $\mathsf d(z_n,z)=n$.
 \item Let $(g,n)\in H(G,f)$. First we prove that either $\max\mathsf L((g,n))=n-1$ or $\mathsf d(\mathsf Z_{n-1}((g,n)),\mathsf Z_n((g,n)))=3$.
If $\max\mathsf L((g,n))=n-1$, then the assertion is trivial. Thus assume $\max\mathsf L((g,n))=n$. Then there is $z\in\mathsf Z_n((g,n))$ of the form $z=(e_{n_1},1)+\ldots+(e_{n_n},1)$ with $e_{n_1},\ldots,e_{n_n}\in E$ and $e_{n_1}+\ldots+e_{n_n}=g$. Now we find $z'=(e_{n_1}+e_{n_2}+e_2,2)+(e_2,1)+(e_{n_3},1)+\ldots+(e_{n_n},1)\in\mathsf Z_{n-1}((g,n))$ and $\mathsf d(\mathsf Z_{n-1}((g,n)),\mathsf Z_n((g,n)))\leq\mathsf d(z,z')=3$. This proves the assertion in the second case.

If $n\leq 6$, then $\max\mathsf L((g,n))\leq 6$, and thus $\cadj((g,n))\leq 6$. Let now $n\geq 7$. Then there are $n'\in[3,6]$ and $n''\in\N$ such that $n=n'+4n''$ and $(g,n)=(g,n')+n''(0,4)$. Since we have $z_1=4(e_4,1),\,z_2=(e_2,1)+(e_4,1)+(e_2+3e_4,2),\,z_3=2(e_2+2e_4,2)\in\mathsf Z((0,4))$, $|z_1|=4,\,|z_2|=3,\,|z_3|=2$, and $\mathsf d(z_1,z_2)=\mathsf d(z_2,z_3)=3$, we find that $\cadj((g,n))\leq\max\lbrace 3,n'\rbrace=n'\leq 6<\infty$.
 \item By \cite[Theorem 3.1.5.2.a]{MR2194494}, each finitely primary monoid of rank one is tame.
\qedhere
\end{enumerate}
\end{proof}

Note that, for the monoid $H$ in \autoref{4.2}, we have $\cadj(H)<\infty$, and therefore the question of whether the finiteness of the tame degree implies the finiteness of the adjacent catenary degree remains open. Nevertheless, the following result from \cite{MR2660909} might be interpreted as a strong sign that the tame degree can dominate the adjacent catenary degree.

\begin{lemma}[cf. {\cite[Theorem 5.1.b]{MR2660909}}]
Let $H$ be a tame monoid. Then there exists a constant $M\in\N_0$ such that, for all $a\in H$ and for each two adjacent lengths $k,\,l\in\mathsf L(a)\cap[\min\mathsf L(a)+M,\max\mathsf L(a)-M]$, we have $\mathsf d(\mathsf Z_k(a),\mathsf Z_l(a))\leq M$.
\end{lemma}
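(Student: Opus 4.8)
The plan is to reduce everything to the tame degree $t:=\mathsf t(H)<\infty$. As usual we may assume $H$ is reduced. We shall use the elementary inequality $\big||z|-|z'|\big|\le\mathsf d(z,z')$ for all $z,z'\in\mathsf Z(H)$, together with the standard facts that a tame monoid has $\mathsf c(H)<\infty$ and a finite set of distances (see \cite{MR2194494}); hence there is $\delta\in\N_0$ such that, for every $a\in H$, any two adjacent lengths of $a$ differ by at most $\delta$, and, for a suitable absolute constant $N_0$, the central part $\mathsf L(a)\cap[\min\mathsf L(a)+N_0,\max\mathsf L(a)-N_0]$ is well structured (a union of arithmetic progressions sharing a common difference $d\le\delta$, so that adjacent central lengths differ by exactly $d$). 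The final constant $M$ will be taken to be a concrete multiple of $t$, $\delta$ and $N_0$, fixed at the end.

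The core of the argument is a \emph{bounded two-sided length move inside the central window}: there is a constant $M_0=M_0(t,\delta,N_0)\in\N$ such that, for every $a\in H$ and every $z\in\mathsf Z(a)$ with $\min\mathsf L(a)+M_0\le|z|\le\max\mathsf L(a)-M_0$, there exist $z^-,z^+\in\mathsf Z(a)$ with $|z^+|$ the length of $a$ immediately above $|z|$, $|z^-|$ the length of $a$ immediately below $|z|$, and $\mathsf d(z,z^\pm)\le M_0$. To build $z^+$, I would fix a factorization $\tilde z\in\mathsf Z(a)$ of maximal length; since $|\tilde z|>|z|$, some atom $u$ occurs in $\tilde z$ strictly more often than in $z$, and applying the definition of $\mathsf t(H)$ to $z$ and $u$ — legitimate, since $u\mid\tilde z$ exhibits a factorization of $a$ divisible by $u$ — yields a factorization within distance $t$ of $z$ containing $u$. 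Iterating this "trade towards $\tilde z$" boundedly often forces the length to increase, and then the structure of the central part of $\mathsf L(a)$, together with the margin $|z|\le\max\mathsf L(a)-M_0$ (which keeps all the auxiliary factorizations inside $\mathsf Z(a)$), is used to pin the length down to the adjacent value $|z^+|$; the construction of $z^-$ is dual, anchored at a factorization of minimal length and using the lower margin.

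Granting this, the theorem would follow by Noetherian induction on $a$ (legitimate since a tame monoid is a BF-monoid, hence satisfies the ascending chain condition on principal ideals; see \cite{MR2194494}), with $M$ a large enough multiple of $M_0$; if $\max\mathsf L(a)-\min\mathsf L(a)<2M$ there is nothing to prove. Fix adjacent lengths $k<l$ of $a$ in $[\min\mathsf L(a)+M,\max\mathsf L(a)-M]$ and choose $x\in\mathsf Z_k(a)$, $y\in\mathsf Z_l(a)$ with $g:=\gcd(x,y)$ of maximal length. If $g\ne1$, then, exactly as in the proof of \autoref{3.2}.\ref{3.2.3}, $k-|g|$ and $l-|g|$ are adjacent lengths of $b:=\pi_H(g^{-1}x)$ with $\mathsf d(\mathsf Z_k(a),\mathsf Z_l(a))\le\mathsf d(\mathsf Z_{k-|g|}(b),\mathsf Z_{l-|g|}(b))$; either $k-|g|$ and $l-|g|$ still lie in the central window of $\mathsf L(b)$, so the induction hypothesis applies, or $|g|$ is so large that $l-|g|\le M$ and the trivial bound $\mathsf d(\mathsf Z_{k-|g|}(b),\mathsf Z_{l-|g|}(b))\le l-|g|$ already suffices. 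If $g=1$, then $\gcd(x',y')=1$ for all $x'\in\mathsf Z_k(a)$, $y'\in\mathsf Z_l(a)$, whence $\mathsf d(\mathsf Z_k(a),\mathsf Z_l(a))=l$ by \autoref{3.3}.\ref{3.3.2}; but applying the core lemma going up to $x$ (whose length $k$ lies in the central window) gives, since $l$ is exactly the length of $a$ immediately above $k$, a factorization $z^+\in\mathsf Z_l(a)$ with $\mathsf d(x,z^+)\le M_0<l$, contradicting $\mathsf d(\mathsf Z_k(a),\mathsf Z_l(a))=l$. So $g=1$ cannot occur for central $k,l$, and we are always in the first case.

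The main obstacle is the length bookkeeping concentrated in the core lemma. The definition of the tame degree controls distances — hence, via $\big||z|-|z'|\big|\le\mathsf d(z,z')$, how far a single trade can shift a length — but it never lets us land on a prescribed length. Converting a bounded-distance trade into a move that lands exactly on the adjacent length, while guaranteeing that every intermediate factorization is still a factorization of $a$ so that further trades remain available, is precisely what forces the two-sided margin $M$ on $k$ and $l$ and the appeal to the arithmetic-progression structure of the central part of $\mathsf L(a)$; this is also where the phenomenon of \autoref{4.2} is kept at bay, the pathology there occurring only at the extreme length. The finitely many degenerate elements whose length spread is comparable to $M$ have to be treated separately, but they cause no difficulty since then a length of $a$ is itself bounded by $M$.
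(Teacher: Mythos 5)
First, a point of orientation: the paper does not prove this lemma at all --- it is imported verbatim from Geroldinger--Kainrath \cite[Theorem 5.1.b]{MR2660909}, so there is no internal proof to compare against, and your attempt has to stand on its own. It does not. The entire content of the theorem is concentrated in your ``core lemma'' (a bounded move from any centrally placed $z\in\mathsf Z(a)$ to the adjacent length above or below), and that lemma is asserted rather than proved. Indeed, if the core lemma held, the theorem would follow in one line (take any $x\in\mathsf Z_k(a)$ and produce $z^+\in\mathsf Z_l(a)$ with $\mathsf d(x,z^+)\le M_0$), so the Noetherian induction and the gcd--cancellation reduction are superfluous scaffolding around the one step you have not supplied. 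The mechanism you sketch for the core lemma fails: the tame degree $\mathsf t(a,u)$ only guarantees a factorization of $a$ \emph{divisible by $u$ once}, so if $z$ already contains the atom $u$ (merely fewer times than the maximal-length factorization $\tilde z$), the tame-degree application is vacuous and $z'=z$ qualifies. Even when a trade is non-vacuous, a single trade can shift the length up, down, or not at all, and nothing forces successive trades to make progress towards $\tilde z$ or to increase the length after boundedly many steps; and there is no mechanism at all for ``pinning the length down'' to the prescribed adjacent value. You correctly identify this as ``the main obstacle,'' but naming an obstacle is not the same as overcoming it. This is exactly where \cite{MR2660909} invests its real work (via the machinery of patterns and tamely generated sets of lengths), and it is also where the pathology of \autoref{4.2} has to be excluded.

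Two further points. The structural input you invoke --- that the central part of $\mathsf L(a)$ is a union of arithmetic progressions so that adjacent central lengths differ by exactly $d$ --- is essentially the Structure Theorem for Sets of Lengths for tame monoids, which is part (a) of the very theorem being cited; treating it as a ``standard fact'' imports a result of comparable depth, and moreover an almost arithmetical multiprogression has a period $\mathcal D\subset[0,d]$ that need not equal $\lbrace 0,d\rbrace$, so adjacent central gaps need not all equal $d$. Finally, in your reduction to $b=\pi_H(g^{-1}x)$ the window condition does not transfer: from $k\ge\min\mathsf L(a)+M$ one cannot conclude $k-|g|\ge\min\mathsf L(b)+M$, since $\min\mathsf L(b)$ bears no useful relation to $\min\mathsf L(a)-|g|$ (and similarly for the upper margin), while your fallback case only covers $l-|g|\le M$; so even granting the core lemma, the induction step as written has uncovered cases.
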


\begin{lemma}
\label{4.4}
Let $H$ be an atomic monoid.
\begin{enumerate}
\item\label{4.4.1} If $H$ is half-factorial, then $\cadj(H)=0$ and $\cmon(H)=\ceq(H)=\mathsf c(H)$.
\item\label{4.4.2} If $a\in H$ satisfies $|\mathsf L(a)|\leq 2$, then $\muadj(a)\leq\mathsf t(H)$.
\end{enumerate}
\end{lemma}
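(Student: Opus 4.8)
The plan is to treat the two parts separately: part~\ref{4.4.1} is pure bookkeeping with the definitions, whereas part~\ref{4.4.2} needs a short argument invoking the tame degree. For part~\ref{4.4.1}, I would first note that half-factoriality forces $|\mathsf L(a)|=1$ for \emph{every} $a\in H$ (for $a\in\mal H$ one has $\mathsf L(a)=\{0\}$), so $\mathsf L(a)$ contains no adjacent pair and hence $\cadj(a)=\sup\emptyset=0$ for every $a$, giving $\cadj(H)=0$. Feeding this into the identity $\cmon(H)=\sup\{\ceq(H),\cadj(H)\}$ recalled after \autoref{def:cadj} yields $\cmon(H)=\ceq(H)$. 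It then remains to identify this common value with $\mathsf c(H)$: combining $\cmon(H)=\ceq(H)$ with the inequality $\mathsf c(H)\le\cmon(H)$ recorded in the excerpt gives $\mathsf c(H)\le\ceq(H)$, and conversely, in a half-factorial monoid every $N$-chain lying inside a fixed set $\mathsf Z(a)$ consists of factorizations of $a$, all of equal length, hence is automatically an equal-length $N$-chain; therefore $\ceq(a)\le\mathsf c(a)$ for all $a$, and so $\ceq(H)\le\mathsf c(H)$.

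For part~\ref{4.4.2} I may assume $\mathsf t(H)<\infty$, since otherwise there is nothing to prove. If $|\mathsf L(a)|\le 1$ there is no adjacent pair and $\muadj(a)=0$. So suppose $\mathsf L(a)=\{l,k\}$ with $l<k$; then $(l,k)$ is the only adjacent pair, so $\muadj(a)\in\{0,k\}$, and it suffices to show that $\mathsf d(\mathsf Z_k(a),\mathsf Z_l(a))=k$ implies $k\le\mathsf t(H)$. Assuming this equality, \autoref{3.3}.\ref{3.3.2} (with $\max\{k,l\}=k$) gives $\gcd(x,y)=1$ for all $x\in\mathsf Z_k(a)$ and $y\in\mathsf Z_l(a)$. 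Now fix $x\in\mathsf Z_k(a)$, an atom $u$ dividing $x$ in $\mathsf Z(H)$, and $y\in\mathsf Z_l(a)$. Since $x\in\mathsf Z(a)\cap u\mathsf Z(H)\ne\emptyset$, the definition of $\mathsf t(a,u)$ in \autoref{def-tame} provides $z'\in\mathsf Z(a)\cap u\mathsf Z(H)$ with $\mathsf d(y,z')\le\mathsf t(a,u)\le\mathsf t(H)$. As $|z'|\in\{l,k\}$, two cases arise: if $|z'|=l$, then $u$ divides both $z'\in\mathsf Z_l(a)$ and $x\in\mathsf Z_k(a)$, contradicting $\gcd(x,z')=1$; hence $|z'|=k$, i.e.\ $z'\in\mathsf Z_k(a)$, and then $k=\mathsf d(\mathsf Z_k(a),\mathsf Z_l(a))\le\mathsf d(z',y)\le\mathsf t(H)$, as wanted.

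The only non-routine ingredient is the last argument in part~\ref{4.4.2}: using the tame degree to replace the short factorization $y$ by a factorization $z'$ that is still divisible by an atom of the long factorization $x$, and then using the coprimality from \autoref{3.3}.\ref{3.3.2} to exclude the case $|z'|=l$. Everything else is unwinding definitions. I would also double-check the degenerate cases: that $|\mathsf L(a)|=2$ already forces $l\ge 2$, so that $x$ really does admit an atom divisor, and that when $\mathsf d(\mathsf Z_k(a),\mathsf Z_l(a))<k$ one obtains $\muadj(a)=0$ directly and there is nothing to prove.
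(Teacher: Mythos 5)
Your proposal is correct and follows essentially the same route as the paper: part 1 is the same unwinding of definitions, and part 2 uses the identical key idea (the coprimality of all pairs in $\mathsf Z_k(a)\times\mathsf Z_l(a)$ combined with the tame degree applied to an atom of one of the two factorizations). The only cosmetic difference is that you take the atom $u$ from the long factorization and bound $\mathsf d(y,z')$ for the short $y$, whereas the paper takes $u$ from the short factorization and bounds the distance from the long one to the fiber $\mathsf Z(a)\cap u\mal H\mathsf Z(H)$ directly; the two are mirror images of one another.
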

\begin{proof}
Since the arithmetic of $H$ is determined solely by $H_{\mathrm{red}}$, we may assume that $H$ is reduced.
\begin{enumerate}
\item Since, for all $a\in H$ with $|\mathsf L(a)|=1$, we have no adjacent lengths, it follows that $\cadj(H)=0$, and thus $\cmon(H)=\ceq(H)$. As---in this special situation---every chain of factorizations is an equal-length chain of factorizations, we get $\ceq(H)=\mathsf c(H)$.
\item Choose $a\in H$ such that $|\mathsf L(a)|\leq 2$. If $|\mathsf L(a)|=1$, then $\muadj(a)=0$. Now suppose $|\mathsf L(a)|=2$. If $\muadj(a)=0$, then there is nothing to show. Now suppose $\muadj(a)>0$. Then $\muadj(a)=\max\mathsf L(a)$, and thus $\gcd(x,y)=1$ for all $x,\,y\in\mathsf Z(a)$ with $|x|=\min\mathsf L(a)$ and $|y|=\max\mathsf L(a)$. Let $x,\,y\in\mathsf Z(a)$ with $|x|=\min\mathsf L(a)$ and $|y|=\max\mathsf L(a)$ and choose $u\in\mathcal A(H)$ such that $x\in\mathsf Z(a)\cap u\mal H\mathsf Z(H)$. Then there is no $y'\in\mathsf Z(a)\cap u\mal H\mathsf Z(H)$ with $|y'|=|y|$. Now we find
\[
\mathsf t(H)\geq\mathsf t(a,u\mal H)\geq\mathsf d(y,\mathsf Z(a)\cap u\mal H\mathsf Z(H))=|y|=\max\mathsf L(a)=\muadj(a).
\qedhere
\]
\end{enumerate}
\end{proof}

Next we formulate another variant of the catenary degree, which is somewhat similar to the adjacent catenary degree and equals it in a special situation. The main difference is that we can prove that the $m$-adjacent catenary degree is finite for tame monoids when $m$ is sufficiently large.

\begin{definition}
\label{def:cm}
Let $H$ be an atomic monoid, let $a\in H$ and let $m\in\N$.
\begin{enumerate}
\item We set
\[
\mu_{\mathrm{ad},m}(a)=\sup\lbrace k\in\mathsf L(a)\mid\mathsf d(\mathsf Z_k(a),\mathsf Z_{[k-m,k)}(a))=k\rbrace
\quad\mbox{and}\quad
\mu_{\mathrm{ad},m}(H)=\sup\lbrace\mu_{\mathrm{ad},m}(a)\mid a\in H\rbrace.
\]
\item\label{def:cm1} We define
\[
\mathsf c_{\mathrm{ad},m}(a)=\sup\lbrace\mathsf d(\mathsf Z_k(a),\mathsf Z_{[k-m,k)}(a))\mid k\in\mathsf L(a)\rbrace
\]
as the \emph{$m$-adjacent catenary degree} of $a$.\\
Also, $\mathsf c_{\mathrm{ad},m}(H)=\sup\lbrace\mathsf c_{\mathrm{ad},m}(a)\mid a\in H\rbrace$ is called the \emph{$m$-adjacent catenary degree} of $H$.
\end{enumerate}
\end{definition}
Obviously, we find
\[
\mathsf c_{\mathrm{ad},m}(H)
\begin{cases}
= 0 & m<\min\triangle(H) \\
\leq\cadj(H) \\
=\cadj(H) & \triangle(H)=\lbrace n\rbrace\mbox{ and }n\leq m<2n.
\end{cases}
\]

Since the definitions of the $m$-adjacent catenary degree and $\mu_{\mathrm{ad},m}(H)$ are similar to those of the adjacent catenary degree and $\muadj(H)$, we can now prove the analog of \autoref{3.2}.\ref{3.2.3} for the two newly defined invariants.

\begin{theorem}
\label{4.6}
Let $H$ be an atomic monoid and let $m\in\N$. Then
\begin{enumerate}
\item \label{4.6.1} $\mathsf c_{\mathrm{ad},m}(a)\geq\mu_{\mathrm{ad},m}(a)$ for all $a\in H$, and $\mathsf c_{\mathrm{ad},m}(H)=\mu_{\mathrm{ad},m}(H)$.
\item \label{4.6.2} $\mathsf c_{\mathrm{ad},m}(H)\leq\mathsf t(H)$ for all $m\geq\mathsf t(H)$.
\end{enumerate}
\end{theorem}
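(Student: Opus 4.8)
\emph{Part~\ref{4.6.1}.} As usual one reduces to the case that $H$ is reduced. The inequality $\mathsf c_{\mathrm{ad},m}(a)\ge\mu_{\mathrm{ad},m}(a)$ is immediate from \autoref{def:cm}: if $\mu_{\mathrm{ad},m}(a)\in\lbrace 0,\infty\rbrace$ there is nothing to show, and otherwise $\mu_{\mathrm{ad},m}(a)$ is attained at some $k\in\mathsf L(a)$ with $\mathsf d(\mathsf Z_k(a),\mathsf Z_{[k-m,k)}(a))=k$, so $\mathsf c_{\mathrm{ad},m}(a)\ge k$; passing to suprema gives $\mathsf c_{\mathrm{ad},m}(H)\ge\mu_{\mathrm{ad},m}(H)$. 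For the converse I would copy the argument for \autoref{3.2}.\ref{3.2.3}: we may assume $\mu_{\mathrm{ad},m}(H)<\infty<\mathsf c_{\mathrm{ad},m}(H)$, so there are $a\in H$ and $k\in\mathsf L(a)$ with $\mathsf d(\mathsf Z_k(a),\mathsf Z_{[k-m,k)}(a))>\mu_{\mathrm{ad},m}(H)$; choose such a pair with $k$ minimal (in particular $\mathsf Z_{[k-m,k)}(a)\neq\emptyset$). If this distance equals $k$, then $k\le\mu_{\mathrm{ad},m}(a)\le\mu_{\mathrm{ad},m}(H)$ by definition, a contradiction. Otherwise \autoref{3.3}.\ref{3.3.2} supplies $x\in\mathsf Z_k(a)$ and $y\in\mathsf Z_{[k-m,k)}(a)$ with $g=\gcd(x,y)\neq 1$, and for $b=\pi_H(g^{-1}x)$ one has, because multiplying factorizations of $b$ by $g$ cannot increase distances, $\mathsf d(\mathsf Z_k(a),\mathsf Z_{[k-m,k)}(a))\le\mathsf d(\mathsf Z_{k-|g|}(b),\mathsf Z_{[(k-|g|)-m,\,k-|g|)}(b))$; since $g^{-1}y$ witnesses that the latter index set is non-empty and $k-|g|<k$, this contradicts the minimality of $k$.

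\emph{Part~\ref{4.6.2}.} We may assume $\mathsf t(H)<\infty$; then the elasticity $\rho(H)$ is finite, so $\max\mathsf L(a)\le\rho(H)\min\mathsf L(a)<\infty$ for every $a\in H$. By Part~\ref{4.6.1} it suffices to show $\mu_{\mathrm{ad},m}(a)\le\mathsf t(H)$ for all $a$, that is, that $k\le\mathsf t(H)$ whenever $k\in\mathsf L(a)$ and $\mathsf d(\mathsf Z_k(a),\mathsf Z_{[k-m,k)}(a))=k$. I plan to prove the formally stronger statement that $\mathsf d(\mathsf Z_k(a),\mathsf Z_{[k-m,k)}(a))\le\mathsf t(H)$ for all $a$ and all $k$ with $\mathsf Z_{[k-m,k)}(a)\neq\emptyset$, by induction on $\max\mathsf L(a)$. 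The base case $\max\mathsf L(a)\le\mathsf t(H)$ is clear, since every distance occurring is at most $\max\mathsf L(a)$. In the induction step, if $\mathsf d(\mathsf Z_k(a),\mathsf Z_{[k-m,k)}(a))<k$, one splits off a common divisor $g$ exactly as in Part~\ref{4.6.1}, passes to $b=\pi_H(g^{-1}x)$ with $\max\mathsf L(b)\le\max\mathsf L(a)-|g|<\max\mathsf L(a)$, and applies the induction hypothesis.

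There remains the case $\mathsf d(\mathsf Z_k(a),\mathsf Z_{[k-m,k)}(a))=k$, where we must show $k\le\mathsf t(H)$; suppose $k>\mathsf t(H)$. Then $\gcd(x,y)=1$ for all $x\in\mathsf Z_k(a)$, $y\in\mathsf Z_{[k-m,k)}(a)$, and $\mathsf Z_{[k-m,k)}(a)\neq\emptyset$. Here I would mimic \autoref{4.4}.\ref{4.4.2}: fix $y_0\in\mathsf Z_{[k-m,k)}(a)$ and an atom $u\mid y_0$. By the gcd condition $u$ divides no element of $\mathsf Z_k(a)$, so $\mathsf Z_k(a)\cap u\mathsf Z(H)=\emptyset$, while $y_0\in\mathsf Z(a)\cap u\mathsf Z(H)$. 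Applying \autoref{def-tame} to an arbitrary $x_0\in\mathsf Z_k(a)$ yields $w\in\mathsf Z(a)\cap u\mathsf Z(H)$ with $\mathsf d(x_0,w)\le\mathsf t(H)<k$, and necessarily $|w|\neq k$. If $|w|<k$, then $\mathsf d(x_0,w)=k-|\gcd(x_0,w)|$ forces $\gcd(x_0,w)\neq 1$ and, because $m\ge\mathsf t(H)$, also $|w|\ge k-\mathsf t(H)\ge k-m$; thus $w\in\mathsf Z_{[k-m,k)}(a)$ with $\gcd(x_0,w)\neq 1$, contradicting $\mathsf d(\mathsf Z_k(a),\mathsf Z_{[k-m,k)}(a))=k$.

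The hard part is the remaining possibility $|w|>k$ — i.e.\ that the tameness move is forced to \emph{lengthen} the factorization — and this is the step I expect to cost the most work, since the clean contradiction above is no longer available. Here $w$ is a factorization of $a$ with $k<|w|\le k+\mathsf t(H)$ and $u\mid w$, $u\mid y_0$. My plan to close it is to feed this configuration back into the induction on $\max\mathsf L(a)$: since $\mathsf d(x_0,w)<k$ we have $g=\gcd(x_0,w)\neq 1$, and as $k\in[\,|w|-m,|w|)$ with $x_0\in\mathsf Z_{[|w|-m,|w|)}(a)$, passing to $b=\pi_H(g^{-1}x_0)$ lowers $\max\mathsf L$; one then has to track which lengths of $a$ near $k$ persist in $b$ and relate the resulting distances in $b$ back to the one in $a$ so as to contradict $\mathsf d(\mathsf Z_k(a),\mathsf Z_{[k-m,k)}(a))=k$. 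Making this length bookkeeping precise is the delicate point; everything else is a routine adaptation of the arguments already given for $\cat(\cdot)$, $\ceq(\cdot)$ and $\cadj(\cdot)$.
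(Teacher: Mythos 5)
Part~\ref{4.6.1} of your proposal is correct and is essentially the paper's argument (take a minimal counterexample length $k$, split off a common divisor $g$, pass to $b=\pi_H(g^{-1}x)$, and invoke minimality). In Part~\ref{4.6.2}, your reduction to showing $k=\mu_{\mathrm{ad},m}(a)\le\mathsf t(H)$, the choice of an atom $u$ dividing some $y_0\in\mathsf Z_{[k-m,k)}(a)$, and the application of $\mathsf t(a,u\mal H)$ to $x_0\in\mathsf Z_k(a)$ are also exactly what the paper does; your treatment of the resulting $w$ with $|w|<k$ (either $|w|\ge k-m$, so $\gcd(x_0,w)\ne 1$ contradicts $\mathsf d(\mathsf Z_k(a),\mathsf Z_{[k-m,k)}(a))=k$, or $|w|<k-m$, which forces $\mathsf d(x_0,w)\ge m+1>\mathsf t(H)$) is correct and is precisely the content of the paper's bound $\mathsf d(x,\mathsf Z_l(a)\cap u\mal H\mathsf Z(H))\ge\min\lbrace k,m+1\rbrace$ for $l<k$.

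The gap is the case $|w|>k$, which you flag but do not close, and the repair you sketch does not work as described: setting $g=\gcd(x_0,w)$ and passing to $b=\pi_H(g^{-1}x_0)$ destroys the hypothesis you want to contradict, because every factorization in $\mathsf Z_{[k-m,k)}(a)$ is coprime to $x_0$ and hence not divisible by $g$, so none of them descends to a factorization of $b$; there is no evident way to produce a length of $b$ in $[\,k-|g|-m,\,k-|g|)$ realizing distance $k-|g|$ from $\mathsf Z_{k-|g|}(b)$. For comparison, the paper's proof treats this case only implicitly: in~\eqref{eq1} it asserts $\mathsf d(x,\mathsf Z_l(a)\cap u\mal H\mathsf Z(H))\ge\min\lbrace k,m+1\rbrace$ for every $l\in\mathsf L(a)$ with $l\ne k$, and while for $l<k$ this is exactly your argument, for $l>k$ one only gets $\mathsf d(x,w)=l-|\gcd(x,w)|\ge l-k$, which is not enough, and no further justification is given there. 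So you have isolated precisely the step at which the paper's own argument is thinnest; to become a proof, your writeup must supply the missing lower bound for the long elements of $\mathsf Z(a)\cap u\mal H\mathsf Z(H)$ (or a different use of tameness that excludes them), and the length bookkeeping you defer is not a routine adaptation of the earlier arguments.
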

\begin{proof}
\mbox{}
\begin{enumerate}
\item For $m<\min\triangle(H)$, we have $\mathsf c_{\mathrm{ad},m}(H)=0=\mu_{\mathrm{ad},m}(H)$ by definition. Now let $m\in\N$ and $m\geq\min\triangle(H)$.\\
First we let $a\in H$ and show that $\mathsf c_{\mathrm{ad},m}(a)\geq\mu_{\mathrm{ad},m}(a)$, after which $\mathsf c_{\mathrm{ad},m}(H)\geq\mu_{\mathrm{ad},m}(H)$ follows by passing to the supremum on both sides. If $\mu_{\mathrm{ad},m}(a)=0$ or $\mu_{\mathrm{ad},m}(a)=\infty$, this is trivial. Now let $\mu_{\mathrm{ad},m}(a)=k\in\N$ and $[k-m,k)\cap\mathsf L(a)=\lbrace l_1,\ldots,l_n\rbrace$. Then, by \autoref{def:cm}.\ref{def:cm1}, $\mathsf c_{\mathrm{ad},m}(a)\geq\mathsf d(\mathsf Z_k(a),\mathsf Z_{[k-m,k)}(a))=k=\mu_{\mathrm{ad},m}(a)$.\\
Now we prove $\mu_{\mathrm{ad},m}(H)\geq\mathsf c_{\mathrm{ad},m}(H)$. We must prove that $\mathsf c_{\mathrm{ad},m}(a)\leq\mu_{\mathrm{ad},m}(H)$ for all $a\in H$. Assume to the contrary that there is some $a\in H$ such that $\mathsf c_{\mathrm{ad},m}(a)>\mu_{\mathrm{ad},m}(H)$. Let $k\in\N$ be minimal such that there is $a\in H$ with $\mathsf c_{\mathrm{ad},m}(a)=\mathsf d(\mathsf Z_k(a),\mathsf Z_{[k-m,k)}(a))$. If $\mathsf d(\mathsf Z_k(a),\mathsf Z_{[k-m,k)}(a))<k$, then there are some $x\in\mathsf Z_k(a)$ and $y\in\mathsf Z_{[k-m,k)}(a)$ such that $g=\gcd(x,y)\neq 1$. If $b=\pi_H(g^{-1}x)$, then $k-|g|,\,|y|-|g|\in\mathsf L(b)\cap [k-|g|-m,k-|g|]$ and
\[
 \mathsf c_{\mathrm{ad},m}(a)=\mathsf d(\mathsf Z_k(a),\mathsf Z_{[k-m,k)}(a))\leq\mathsf d(\mathsf Z_{k-|g|}(b),\mathsf Z_{[k-|g|-m,k-|g|)}(b))\leq\mathsf c_{\mathrm{ad},m}(b),
\]
and, by the minimal choice of $k$, we infer that $\mathsf c_{\mathrm{ad},m}(b)\leq\mu_{\mathrm{ad},m}(H)$, a contradiction.
\item If $H$ is not tame, then there is no $m\in\N$ with $m\geq\mathsf t(H)$. Thus we may assume that $\mathsf t(H)<\infty$.
Let $m\geq\mathsf t(H)$. By part~\ref{4.6.1}, it suffices to show that $\mu_{\mathrm{ad},m}(a)\leq\mathsf t(H)$ for all $a\in H$. Let $a\in H$. If $\mu_{\mathrm{ad},m}(a)=0$, then there is nothing to show. Now suppose $\mu_{\mathrm{ad},m}(a)=k>0$. Then we have $\mathsf L(a)\cap [k-m,k) = \lbrace l_1,\ldots, l_n\rbrace$ and $\mathsf d(\mathsf Z_k(a),\mathsf Z_{l_i}(a))=k$ for all $i\in[1,n]$. Then $\gcd(x,y)=1$ for all $x\in\mathsf Z_k(a)$ and $y\in\mathsf Z_{l_1}(a)$. Now let $x\in\mathsf Z_k(a)$, $y\in\mathsf Z_{l_1}(a)$, and choose $u\in\mathcal A(H)$ such that $y\in\mathsf Z(a)\cap u\mal H\mathsf Z(H)$. We find
\begin{multline}
\label{eq1}
\mathsf t(H)\geq\mathsf t(a,u\mal H)\geq
\mathsf d(x,\mathsf Z(a)\cap u\mal H\mathsf Z(H)) \\
=\min\lbrace\mathsf d(x,\mathsf Z_l(a)\cap u\mal H\mathsf Z(H))\mid l\in\mathsf L(a),\,l\neq k\rbrace
\geq\min\lbrace k,m+1\rbrace=k=\mu_{\mathrm{ad},m}(a),
\end{multline}
since $m+1>\mathsf t(H)$.
\qedhere
\end{enumerate}
\end{proof}

Another interesting observation arising from the proof of \autoref{4.6}.\ref{4.6.2} is the fact that the crucial inequality~\eqref{eq1} might fail for $m<\mathsf t(H)$ for some $a\in H$ (of course, with $\mu_{\mathrm{ad},m}(a)>0$). Additionally, \autoref{4.6}.\ref{4.6.2} can never be used to bound $\cadj(H)$, since $\cadj(H)=\mathsf c_{\mathrm{ad},m}(H)$ for $m=\min\triangle(H)$ if $|\triangle(H)|=1$, but then $\mathsf t(H)\geq m+2>m$, and therefore \autoref{4.6}.\ref{4.6.2} does not hold for $\cadj(H)$.\bigskip

\bigskip
\section{Applications to semigroup rings and generalized power series rings}
\label{sec:comp}
\bigskip

The arithmetic of such domains has attracted a lot of interest; for an overview, we refer to \cite{MR2265800} and \cite{MR1357822}; and for some recent results, we refer to \cite{MR2577137} and \cite{MR2483834}. Nevertheless, there are nearly no precise results on their arithmetic. In order to apply our monoid theoretic tools from \autoref{sec:cmon} and \cite{phil10} to the explicit computation of various arithmetical invariants of semigroup rings and generalized power series rings, we follow a $2$-step strategy. In the first step, we apply transfer principles as described in much detail in \cite[Section 3.2]{MR2194494}, and in the second step, we make use of the monoid theoretic tools.

\begin{definition}
\label{def:transfer}
A monoid homomorphism $\theta:H\rightarrow B$ is called a \emph{transfer homomorphism} if it has the following properties:
\begin{itemize}
\item[$\mathbf{T1}$] $B=\theta(H)\mal B$ and $\theta^{-1}(\mal B)=\mal H$.
\item[$\mathbf{T2}$] If $a\in H,\,r,\,s\in B$ and $\theta(a)=rs$, then there exist $b,\,c\in H$ such that $\theta(b)\sim r$, $\theta(c)\sim s$, and $a=bc$.
\end{itemize}
\end{definition}

\begin{definition}
Let $\theta:H\rightarrow B$ be a transfer homomorphism of atomic monoids and $\bar\theta:\mathsf Z(H)\rightarrow\mathsf Z(B)$ the unique homomorphism satisfying $\bar\theta(u\mal H)=\theta(u)\mal B$ for all $u\in\mathcal A(H)$. We call $\bar\theta$ the extension of $\theta$ to the factorization monoids.\\
For $a\in H$, the \emph{catenary degree in the fibers} $\mathsf c(a,\theta)$ denotes the smallest $N\in\N_0\cup\lbrace\infty\rbrace$ with the following property:
\begin{itemize}
 \item[] For any two factorizations $z,\,z'\in\mathsf Z(a)$ with $\bar\theta(z)=\bar\theta(z')$, there exists a finite sequence of factorizations $(z_0,z_1,\ldots,z_k)$ in $\mathsf Z(a)$ such that $z_0=z$, $z_k=z'$, $\bar\theta(z_i)=\bar\theta(z)$, and $\mathsf d(z_{i-1},z_i)\leq N$ for all $i\in[1,k]$; that is, $z$ and $z'$ can be concatenated by an $N$-chain in the fiber $\mathsf Z(a)\cap\bar\theta^{-1}((\bar\theta(z)))$.
\end{itemize}
Also, $\mathsf c(H,\theta)=\sup\lbrace\mathsf c(a,\theta)\mid a\in H\rbrace$ is called the \emph{catenary degree in the fibers} of $H$.
\end{definition}

We briefly fix the notation concerning sequences over finite abelian groups. Let $G$ be an additively written, finite abelian group. For a subset $A\subset G$ and an element $g\in G$, we set $-A=\lbrace -a\mid a\in A\rbrace$ and $A-g=\lbrace a-g\mid a\in A\rbrace$. Let $\mathcal F(G)$ be the free abelian monoid with basis $G$. The elements of $\mathcal F(G)$ are called \emph{sequences} over $G$. If a sequence $S\in\mathcal F(G)$ is written in the form $S=g_1\mdots g_l$, we tacitly assume that $l\in\N_0$ and $g_1,\ldots,g_l\in G$. For a sequence $S=g_1\mdots g_l$, we call
\begin{itemize}
 \item[] $|S|=l$ the \emph{length} of $S$,
 \item[] $\sigma(S)=\sum_{i=1}^lg_i\in G$ the \emph{sum} of $S$,
 \item[] $\supp(S)=\lbrace g_1,\ldots,g_l\rbrace\subset G$ the \emph{support} of $S$,
 \item[] $\Sigma(S)=\lbrace\sum_{i\in I}g_i\mid\emptyset\neq I\subset[1,l]\rbrace\subset G$ the \emph{set of subsums} of $S$, and
 \item[] $-\Sigma(S)=\lbrace\sum_{i\in I}(-g_i)\mid\emptyset\neq I\subset[1,l]\rbrace=\lbrace -g\mid g\in\Sigma(S)\rbrace\subset G$ the \emph{set of negative subsums} of $S$.
\end{itemize}
The sequence $S$ is called
\begin{itemize}
 \item a \emph{zero-sum sequence} if $\sigma(S)=0$,
 \item \emph{zero-sum free} if there is no non-trivial zero-sum subsequence, i.e.~$0\notin\Sigma(S)$, and
 \item a \emph{minimal zero-sum sequence} if $S$ is nontrivial, $\sigma(S)=0$, and every subsequence $S'\mid S$ with $1\leq |S'|<|S|$ is zero-sum free.
\end{itemize}
For a subset $G_0\subset G$, we set
\begin{align*}
\mathcal B(G_0) &= \lbrace S\in\mathcal F(G_0)\mid\sigma(S)=0\rbrace\mbox{ for the \emph{block monoid} over }G_0\mbox{ and} \\
\mathcal A(G_0) &= \lbrace S\in\mathcal F(G_0)\mid S\mbox{ minimal zero-sum sequence }\rbrace\subset\mathcal B(G_0).
\end{align*}
Then, in fact, $\mathcal B(G_0)$ is an atomic monoid and $\mathcal A(G_0)=\mathcal A(\mathcal B(G_0))$ is its set of atoms.\\
The \emph{Davenport constant} $\mathsf D(G_0)\in\N$ is defined to be the supremum of all lengths of sequences in $\mathcal A(G_0)$.

\begin{definition}
Let $G$ be an additive abelian group, $G_0\subset G$ a subset, $T$ a monoid, $\iota:T\rightarrow G$ a homomorphism, and $\sigma:\mathcal F(G_0)\rightarrow G$ the unique homomorphism such that $\sigma(g)=g$ for all $g\in G_0$. Then we call
\[
\mathcal B(G_0,T,\iota)=\lbrace St\in\mathcal F(G_0)\times T\mid\sigma(S)+\iota(t)=\mathbf 0\rbrace
\]
the \emph{$T$-block monoid} over $G_0$ defined by $\iota$.\\
If $T=\lbrace 1\rbrace$, then $\mathcal B(G_0,T,\iota)=\mathcal B(G_0)$ is the block monoid of all zero-sum sequences over $G_0$ and if $G_0=\lbrace\mathbf 0\rbrace$ then $\mathcal B(G_0,T,\iota)=[\mathbf 0]\times T$. Since $\mathbf 0\in\mathcal B(G_0,T,\iota)$ is prime, the arithmetic of $T$ and $\mathcal B(G_0,T,\iota)$ coincide in this situation.
\end{definition}

\begin{lemma}
\label{5.4}
Let $D$ be an atomic monoid, $P\subset D$ a set of prime elements, and $T\subset D$ an atomic submonoid such that $D=\mathcal F(P)\times T$. Let $H\subset D$ be a saturated atomic submonoid, let $G=\mathsf q(D/H)$ be its class group, let $\iota:T\rightarrow G$ be a homomorphism defined by $\iota(t)=[t]_{D/H}$, and suppose each class in $G$ contains some prime element from $P$.
\begin{enumerate}
\item\label{5.4.1} The map $\beta:H\rightarrow\mathcal B(G,T,\iota)$, given by $\beta(pt)=[p]_{D/H}+\iota(t)=[p]_{D/H}+[t]_{D/H}$, is a transfer homomorphism onto the $T$-block monoid over $G$ defined by $\iota$ and $\mathsf c(H,\beta)\leq 2$
\item\label{5.4.2} The following inequalities hold:
\begin{eqnarray*}
\mathsf c(\mathcal B(G,T,\iota))\leq &\mathsf c(H) &\leq\max\lbrace\mathsf c(\mathcal B(G,T,\iota)),\mathsf c(H,\beta)\rbrace,\\ 
\cmon(\mathcal B(G,T,\iota))\leq &\cmon(H) &\leq\max\lbrace\cmon(\mathcal B(G,T,\iota)),\mathsf c(H,\beta)\rbrace,\mbox{ and}\\
\mathsf t(\mathcal B(G,T,\iota))\leq &\mathsf t(H) &\leq\mathsf t(\mathcal B(G,T,\iota))+\mathsf D(G)+1.
\end{eqnarray*}
In particular, the equality $\mathsf c(H)=\mathsf c(\mathcal B(G,T,\iota))$ holds if $\mathsf c(\mathcal B(G,T,\iota))\geq 2$, and the equality $\cmon(H)=\cmon(\mathcal B(G,T,\iota))$ holds if $\cmon(\mathcal B(G,T,\iota))\geq 2$.
\item\label{5.4.3} $\mathcal L(H)=\mathcal L(\mathcal B(G,T,\iota))$, $\triangle(H)=\triangle(\mathcal B(G,T,\iota))$, $\min\triangle(H)=\min\triangle(\mathcal B(G,T,\iota))$, and $\rho(H)=\rho(\mathcal B(G,t,\iota))$.
\item\label{5.4.4} We set $\mathcal B=\lbrace S\in\mathcal B(G,T,\iota)\mid\mathbf 0\nmid S\rbrace$. Then $\mathcal B$ and $\mathcal B(G,T,\iota)$ have the same arithmetical properties, and
\begin{eqnarray*}
\mathsf c(\mathcal B)\leq &\mathsf c(H) &\leq\max\lbrace\mathsf c(\mathcal B),\mathsf c(H,\beta)\rbrace,\\ 
\cmon(\mathcal B)\leq &\cmon(H) &\leq\max\lbrace\cmon(\mathcal B),\mathsf c(H,\beta)\rbrace,\mbox{ and}\\
\mathsf t(\mathcal B)\leq &\mathsf t(H) &\leq\mathsf t(\mathcal B)+\mathsf D(G)+1.
\end{eqnarray*}
In particular, the equality $\mathsf c(H)=\mathsf c(\mathcal B)$ holds if $\mathsf c(\mathcal B)\geq 2$, and the equality $\cmon(H)=\cmon(\mathcal B)$ holds if $\cmon(\mathcal B)\geq 2$.\\
Additionally,
$\mathcal L(H)=\mathcal L(\mathcal B)$, $\triangle(H)=\triangle(\mathcal B)$, $\min\triangle(H)=\min\triangle(\mathcal B)$, and $\rho(H)=\rho(\mathcal B)$.
\end{enumerate}
\end{lemma}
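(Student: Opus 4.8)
The plan is to reduce throughout to $H$ reduced and then to imitate the proof of the classical block-monoid transfer homomorphism \cite[Theorem 3.4.10]{MR2194494}, the factor $T$ playing a purely passive role. For part~\ref{5.4.1}: since $H\subset D$ is saturated we have $d\in H$ iff $[d]_{D/H}=0$ in $G$, and $D=\mathcal F(P)\times T$ lets us write every $d\in H$ as $d=p_1\mdots p_l t$ with $p_i\in P$, $t\in T$, so that $\beta(d)=[p_1]\mdots[p_l]\,t$ satisfies $\sigma([p_1]\mdots[p_l])+\iota(t)=[d]_{D/H}=0$; hence $\beta$ is a well-defined homomorphism into $\mathcal B(G,T,\iota)$ (which is automatically atomic, being a surjective transfer-homomorphic image of the atomic monoid $H$). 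Property $\mathbf{T1}$ is checked by noting that the units on both sides are the evident ones and that the hypothesis ``every class of $G$ contains a prime of $P$'' lets us lift an arbitrary $St\in\mathcal B(G,T,\iota)$, $S=g_1\mdots g_l$, by choosing $p_i\in P$ with $[p_i]_{D/H}=g_i$ and putting $d=p_1\mdots p_l t$, which lies in $H$ because $[d]_{D/H}=\sigma(S)+\iota(t)=0$; so $\beta$ is in fact onto. Property $\mathbf{T2}$ follows because, given $\beta(a)=RS'$ in $\mathcal F(G)\times T$ with $a=p_1\mdots p_l t$, freeness of $\mathcal F(G)$ splits $\lbrace p_1,\dots,p_l\rbrace$ into two groups and $t$ into $t_Rt_S$, yielding $b,c\in D$ with $a=bc$; the zero-sum conditions on $R$ and $S'$ force $b,c\in H$, and $\beta(b)=R$, $\beta(c)=S'$. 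Finally $\mathsf c(H,\beta)\le 2$: because $\bar\beta$ preserves lengths, every fiber $\mathsf Z(a)\cap\bar\beta^{-1}(\bar\beta(z))$ is equal-length, and matching the atoms of two factorizations $z,z'$ in such a fiber (after reindexing, $\beta(u_i)=\beta(v_i)$) shows they distribute the same multiset of primes of $a$ over the atom-slots with the same per-slot class profile; hence $z$ can be transformed into $z'$ by a sequence of elementary swaps, each exchanging one prime between two slots within the same class, and such a swap changes at most two atoms (again atoms of $H$, by a symmetric reducibility argument), keeps the $\beta$-image, and so has distance $\le 2$ and stays in the fiber.

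For parts~\ref{5.4.2} and~\ref{5.4.3}: the two-sided bound $\mathsf c(\mathcal B(G,T,\iota))\le\mathsf c(H)\le\max\lbrace\mathsf c(\mathcal B(G,T,\iota)),\mathsf c(H,\beta)\rbrace$, the lower bound $\mathsf t(\mathcal B(G,T,\iota))\le\mathsf t(H)$, and the identity $\mathsf L_H(a)=\mathsf L_{\mathcal B(G,T,\iota)}(\beta(a))$ — which, as $\beta$ is onto, immediately gives $\mathcal L(H)=\mathcal L(\mathcal B(G,T,\iota))$ and hence the statements about $\triangle$, $\min\triangle$, $\rho$ — are the standard transfer-homomorphism estimates of \cite[Section 3.2]{MR2194494}; the upper bound $\mathsf t(H)\le\mathsf t(\mathcal B(G,T,\iota))+\mathsf D(G)+1$ comes from bounding the tame degree inside the fibers of $\beta$, the term $\mathsf D(G)+1$ accounting for how far one must move within a fiber to realize a prescribed atom. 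For $\cmon$ one reruns the proof of the $\mathsf c$-estimate word for word, exploiting that $\bar\beta$ is length-preserving: the image of a monotone chain is monotone, and a monotone $\cmon(\mathcal B(G,T,\iota))$-chain in $\mathcal B(G,T,\iota)$ lifts fiber by fiber to a monotone chain in $H$ (each lifting step costs $\le\cmon(\mathcal B(G,T,\iota))$, and the concluding adjustment inside a fiber costs $\le\mathsf c(H,\beta)$ and, the fiber being equal-length, does not break monotonicity). The ``in particular'' clauses then follow from $\mathsf c(H,\beta)\le 2$.

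For part~\ref{5.4.4}: the sequence $\mathbf 0$ is a prime element of $\mathcal B(G,T,\iota)$, so $\mathcal B(G,T,\iota)\cong\mathcal F(\lbrace\mathbf 0\rbrace)\times\mathcal B$; omitting a prime divisor leaves $\mathsf c$, $\cmon$, $\mathsf t$ and the system of sets of lengths unchanged, and therefore also $\triangle$, $\min\triangle$ and $\rho$. Combining this with parts~\ref{5.4.2} and~\ref{5.4.3} yields all assertions of part~\ref{5.4.4}.

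The step I expect to be the main obstacle is the estimate $\mathsf c(H,\beta)\le 2$: one must make the ``elementary swap'' argument fully precise, i.e.\ show that any two factorizations lying over a common block are joined by a chain of prime-for-prime exchanges of distance at most $2$, which needs both the combinatorial bookkeeping that paired atoms impose equal per-slot class profiles and the verification that each exchange again produces an atom of $H$ over the same block. The second delicate point is the $\cmon$-transfer: one has to confirm that lifting a monotone chain fiber by fiber does not destroy monotonicity, which works exactly because $\bar\beta$ preserves lengths and every fiber is equal-length.
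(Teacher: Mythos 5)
Your proposal is correct and follows essentially the same route as the paper: the paper's proof of this lemma consists entirely of citations to the standard transfer-homomorphism machinery of \cite[Propositions 3.2.3.3, 3.2.3.5, 3.4.8.2, Theorem 3.2.5, Lemma 3.2.6]{MR2194494} for the block homomorphism $\beta$, plus the observation that $\mathbf 0$ is prime for part 4, and your argument simply unpacks sketches of those cited proofs (verification of $\mathbf{T1}$, $\mathbf{T2}$, the swap argument for $\mathsf c(H,\beta)\leq 2$, and the length-preserving lifting for $\cmon$). No gap; the only difference is that you reprove what the paper cites.
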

\begin{proof}
\mbox{}
\begin{enumerate}
\item Follows by \cite[Proposition 3.2.3.3 and Proposition 3.4.8.2]{MR2194494}.
\item The assertion on the catenary degree follows by \cite[Theorem 3.2.5.5]{MR2194494}, the assertion on the monotone catenary degree by \cite[Lemma 3.2.6]{MR2194494}, and the assertion on the tame degree by \cite[Theorem 3.2.5.1]{MR2194494}.
\item Follows by \cite[Proposition 3.2.3.5]{MR2194494}.
\item Since $\mathbf 0\in\mathcal B(G,T,\iota)$ is a prime element, it defines a partition $\mathcal B(G,T,\iota)=[\mathbf 0]\times\mathcal B$ with $\mathcal B=\lbrace S\in\mathcal B(G,T,\iota)\mid\mathbf 0\nmid S\rbrace$. Thus all studied arithmetical invariants coincide for $\mathcal B$ and $\mathcal B(G,T,\iota)$. Now the assertions follow from part~\ref{5.4.2} and part~\ref{5.4.3}.
\qedhere
\end{enumerate}
\end{proof}

From now on, we write monoids additively. Then, for a reduced monoid $H$, $\mal H=\lbrace 0\rbrace$.

\begin{definition}
Let $K$ be a field and $H$ a reduced atomic monoid. Then we call
\begin{itemize}
 \item $K\ldbrack H\rdbrack=K\ldbrack X^s\mid s\in\mathcal A(H)\rdbrack$ the \emph{generalized power series ring}
 \item $K[H]=K[X^s\mid s\in\mathcal A(H)]$ the \emph{semigroup ring}
\end{itemize}
defined by $H$ over $K$.
\end{definition}

A submonoid $H\subset\N_0$ such that $\N_0\setminus H$ is finite is called a \emph{numerical monoid}. Then we set $\mathsf G(H)=\N_0\setminus H$ for the \emph{set of gaps} of $H$. Next we give a characterization of smooth numerical monoids.

While some abstract finiteness results on these rings can be obtained in a quite general setting, only very little is known about the explicit behavior of the various invariants of non-unique factorization theory.
For special generalized power series rings, we repeat the following finiteness result from \cite{MR2408326} in our terminology.
\begin{theorem}[cf. {\cite[Proposition 6.10 and Theorem 6.7]{MR2408326}}]
Let $K$ be a field and $H\subset\N_0$ a numerical monoid. Then $K\ldbrack H\rdbrack$ has finite catenary degree and finite elasticity.
\end{theorem}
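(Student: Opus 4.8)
The plan is to reduce the statement to the known fact that every finitely primary monoid of rank one is tame (\cite[Theorem 3.1.5.2.a]{MR2194494}, as already used in the proof of \autoref{4.2}) together with the fact, recorded in the discussion preceding \autoref{4.4}, that finiteness of the tame degree forces finiteness of both the catenary degree and the elasticity. Since, by convention, the arithmetic of the domain $R := K\ldbrack H\rdbrack$ is that of its multiplicative monoid $\punkt R = R\setminus\{0\}$ and depends only on $(\punkt R)_{\mathrm{red}}$, it suffices to prove that $\punkt R$ is finitely primary of rank one; then $\mathsf c(R)=\mathsf c(\punkt R)<\infty$ and $\rho(R)=\rho(\punkt R)<\infty$ follow at once, and in particular $\punkt R$ is atomic so that the invariants are defined.

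So the work is to verify the two defining inclusions of finite primarity of rank one. First I would recall that $R$ is the subring $\{f\in K\ldbrack X\rdbrack\mid\supp(f)\subseteq H\}$ of the formal power series ring $K\ldbrack X\rdbrack$ in one variable $X=X^1$, the generators $X^s$ ($s\in\mathcal A(H)$) being monomials in $X$. Since $H$ is numerical, $\gcd\mathcal A(H)=1$, so $X$ is integral over $R$ (a root of $T^m-X^m$ for any $m\in\mathcal A(H)$), and a short check shows $\widehat R=K\ldbrack X\rdbrack$; as this is a DVR the monoid $F:=\punkt{\widehat R}=[X]\times\mal F$ with $\mal F=\mal{(K\ldbrack X\rdbrack)}$ is factorial with the single prime $X$. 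Writing $\mathsf v$ for the $X$-adic valuation, a geometric-series computation shows that any $f\in R$ with $\mathsf v(f)=0$ is already a unit of $R$: if $f=a(1-g)$ with $a\in\mal K$, $\mathsf v(g)\geq 1$, $\supp(g)\subseteq H$, then $f^{-1}=a^{-1}\sum_{k\geq 0}g^k$ again has support in $H$. Hence $\punkt R\setminus\mal R=\{f\in R\mid\mathsf v(f)\geq 1\}\subseteq XF$. On the other hand, let $c\in\N$ be the conductor of $H$, i.e.\ the least integer with $c+\N_0\subseteq H$ (finite, as $\N_0\setminus H$ is finite). If $\mathsf v(x)\geq c$ for $x\in K\ldbrack X\rdbrack$, then $\supp(x)\subseteq[c,\infty)\subseteq H$, so $x\in R$; that is, $X^cF\subseteq\punkt R$. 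Together with the factoriality of $F$ and $\punkt R\setminus\mal R\subseteq XF$, this is precisely the assertion that $\punkt R$ is finitely primary of rank one and exponent $c$.

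The only genuine content is this structural step --- pinning down $\widehat R$ and the behaviour of the units --- and I expect the identification $\mal R=\{f\in R\mid\mathsf v(f)=0\}$ to be the point needing the most care; after that everything is a citation. I would also stress, as a matter of strategy, that the transfer/block-monoid route of \autoref{5.4} (or \cite[Section 3.7]{MR2194494}) is \emph{not} available here, because the $v$-class group of $K\ldbrack H\rdbrack$ is infinite as soon as $K$ is infinite (already the classes of the elements $1+aX$, $a\in\mal K$, are pairwise distinct), whereas being finitely primary of rank one is insensitive to the size of $\mal F$. Finally, for the elasticity alone there is a direct argument avoiding tameness: if $\mathsf v(u)\geq c+m$ with $m=\min\mathcal A(H)$, then $u=X^m\cdot(X^{-m}u)$ is a proper factorization in $\punkt R$, so every atom satisfies $\mathsf v(u)\leq c+m-1$; hence for a nonzero nonunit $a$ with $n=\mathsf v(a)$ every factorization of $a$ has length $\leq n$ and $\geq n/(c+m-1)$, giving $\rho(a)\leq c+m-1$ and therefore $\rho(R)\leq c+m-1<\infty$.
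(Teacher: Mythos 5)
Your argument is correct, but note that the paper does not actually prove this statement: it is imported wholesale as a citation of \cite[Proposition 6.10 and Theorem 6.7]{MR2408326}, which establish such finiteness results in the much more general setting of Mori domains. What you supply instead is a short, self-contained proof tailored to the special case at hand: you identify $K\ldbrack H\rdbrack$ inside $K\ldbrack X\rdbrack$ as the power series with support in $H$, verify that the units are exactly the elements of $X$-adic valuation zero, and conclude that $\punkt{(K\ldbrack H\rdbrack)}$ is finitely primary of rank one and exponent the conductor of $H$; finiteness of $\cat$ and $\rho$ then follows from \cite[Theorem 3.1.5]{MR2194494} exactly as invoked in the proof of \autoref{4.2}, or from the general principle (stated before \autoref{4.4}) that tameness dominates these invariants. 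This buys a concrete exponent and, via your closing remark, the explicit bound $\rho\leq(c+m-1)/m$ coming from the fact that atoms have valuation in $[m,c+m-1]$, whereas the citation buys generality. Your observation that the block-monoid transfer of \autoref{5.4} is unavailable over infinite $K$ (infinite class group) correctly explains why the finitely primary route is the right one here. The only blemish is the degenerate case $H=\N_0$ (conductor $c=0$), where the definition of finitely primary requires $k\in\N$ (take $k=1$) and your elasticity bound $c+m-1=0$ is vacuous; but then $K\ldbrack H\rdbrack$ is a discrete valuation ring and there is nothing to prove.
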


In our further investigations, we will restrict ourselves generally to the most simple monoids possible, i.e., to numerical monoids, since even there the situation is very complicated once one begins to calculate everything in detail.

By \cite[Theorem 3.7.1]{MR2194494}, the arithmetic of weakly Krull domains, e.g., a generalized power series ring or a semigroup ring defined over a finite field, can mostly be described by studying appropriate $T$-block monoids, i.e., $\mathcal B(G,T,\iota)\subset\mathcal F(G)\times T$. Special cases of the generalized power series rings and the semigroup rings are studied in \cite[Example 3.7.3, Special Case 3.2 and 3.3]{MR2194494}. For special numerical monoids, i.e, $H=[e,\ldots,2e-1]$ with $e\geq 2$, and the generalized power series ring $R=F\ldbrack H\rdbrack$ over a finite field $F$, there a transfer homomorphism from $R$ onto $H$ is construced. In \autoref{specialtransfer}, we study for which monoids $H$, in general, this homomorphism is indeed a transfer homomorphism. Surprisingly, it turns out that one can characterize a special class of monoids, namely smooth monoids, this way.

\begin{definition}
Let $(H,\leq)$ be a reduced atomic monoid with a total order. Then we call $H$ \emph{smooth} if, for all $h,\,b,\,c\in H$ with $h\geq b+c$, we have $h-b\in H$.
\end{definition}

\begin{lemma}
\label{specialtransfer}
Let $K$ be a field, $(H,\leq)$ a reduced atomic monoid with total order, and set
\[
\phi:
\left\lbrace
\begin{array}{ccc}
\punkt{K\ldbrack H\rdbrack} & \rightarrow & H \\
f=\sum_{h\in H}f_hX^h & \mapsto & \min\lbrace h\in H\mid f_h\neq 0\rbrace\,.
\end{array}
\right.
\]
Then $\phi$ is a homomorphism, and $\phi$ is a transfer homomorphism if and only if $H$ is smooth.
\end{lemma}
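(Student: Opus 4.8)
The statement splits into three claims: that $\phi$ is a monoid homomorphism, and the two directions of the equivalence. I would first dispose of the homomorphism part together with half of $\mathbf{T1}$. For nonzero $f,g\in K\ldbrack H\rdbrack$ the support of $fg$ lies in $\supp(f)+\supp(g)$, whose least element with respect to the (compatible) total order is $\phi(f)+\phi(g)$, and the coefficient of $X^{\phi(f)+\phi(g)}$ in $fg$ is the product of the two leading coefficients, hence nonzero because $K$ is a field; so $\phi(fg)=\phi(f)+\phi(g)$ and $\phi(1)=0$. Since $\phi(X^{h})=h$ for all $h\in H$, the map is onto, giving $H=\phi(\punkt{K\ldbrack H\rdbrack})\mal H$; and a nonzero element of $K\ldbrack H\rdbrack$ is a unit exactly when its constant term does not vanish (invert it by a geometric series, whose support stays inside $H$), i.e.\ exactly when $\phi$ sends it to $\mal H$, so $\phi^{-1}(\mal H)=\mal{(\punkt{K\ldbrack H\rdbrack})}$. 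Thus $\mathbf{T1}$ holds unconditionally, and ``$\phi$ is a transfer homomorphism'' is equivalent to ``$\mathbf{T2}$ holds''.

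For the implication ``$H$ smooth $\Rightarrow\mathbf{T2}$'' I would check the condition directly. Let $a\in\punkt{K\ldbrack H\rdbrack}$ and $r,s\in H$ with $\phi(a)=r+s$; interchanging $r$ and $s$ if necessary (harmless for $\mathbf{T2}$) we may assume $r\le s$. Every exponent $h$ occurring in $a$ satisfies $h\ge\phi(a)=r+s\ge 2r$, so smoothness forces $h-r\in H$. Hence $X^{-r}a=\sum_{h}a_{h}X^{h-r}$ again lies in $K\ldbrack H\rdbrack$, and with $b=X^{r}$, $c=X^{-r}a$ we get $a=bc$, $\phi(b)=r$, $\phi(c)=\phi(a)-r=s$, which is exactly $\mathbf{T2}$.

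For the converse I would argue by contraposition: assuming $H$ not smooth, produce an element witnessing the failure of $\mathbf{T2}$. From a failure of smoothness one extracts $r,s\in H\setminus\mal H$ and $e\in H$ with $r+s<e$ such that $e-r\notin H$, $e-s\notin H$, and — crucially — $e$ falls into a gap of $H$ just above $r+s$ that is narrow relative to $r$ and $s$, namely $e$ lies below the least exponent strictly exceeding $r+s$ that can occur in any product $bc$ with $\phi(b)=r$ and $\phi(c)=s$ (this least exponent is $\min\{r+s',\,r'+s\}$, where $r',s'$ are the successors of $r,s$ in $(H,\le)$, since $\supp(b)\subseteq\{r\}\cup(H\cap[r',\infty))$ and likewise for $c$). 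Put $a=X^{r+s}+X^{e}\in\punkt{K\ldbrack H\rdbrack}$, so $\phi(a)=r+s$. If $\mathbf{T2}$ yielded $a=bc$ with $\phi(b)=r$, $\phi(c)=s$, then the exponents of $bc$ would lie in $\{r+s\}\cup\bigl[\min\{r+s',\,r'+s\},\infty\bigr)$, which misses $e$; so $bc$ could not have an $X^{e}$ term, contradicting $a=bc$. Hence $\mathbf{T2}$ fails and $\phi$ is not a transfer homomorphism.

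The genuine obstacle is the last step, and within it the production of the triple $(r,s,e)$ from the bare failure of smoothness: one has to pick a non-smooth witness and then ``slide'' it so the offending exponent lands in a gap narrow relative to the chosen factor orders, which needs some care with how $\le$ interacts with the additive structure of $H$. Once the triple is in hand the final contradiction is automatic — because $a$ has only two terms, no cancellation analysis for $bc$ is needed, only the location of its possible exponents. The homomorphism statement and the ``smooth $\Rightarrow$ transfer'' direction are, by comparison, routine.
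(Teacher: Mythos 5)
Your treatment of the homomorphism property, of $\mathbf{T1}$, and of the implication ``$H$ smooth $\Rightarrow\mathbf{T2}$'' is correct and essentially the paper's own argument (the paper likewise factors $u$ as $u_{\phi(u)}X^{b}$ times a cofactor whose support is shown to lie in $H$ via smoothness). The issue is the converse. There you have correctly diagnosed what must be proved --- that \emph{no} factorization $a=bc$ with $\phi(b)=r$, $\phi(c)=s$ exists, not merely that $X^{r}\nmid a$ --- and your support analysis (the exponents of $bc$ lie in $\lbrace r+s\rbrace\cup\bigl[\min\lbrace r+s',\,r'+s\rbrace,\infty\bigr)$) is exactly the right tool. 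But you then explicitly defer the construction of a triple $(r,s,e)$ with $e$ lying in a ``narrow gap'', calling it ``the genuine obstacle''. That construction \emph{is} the substance of this direction: from the bare failure of smoothness you must produce a witness $e>r+s$ with $e-r\notin H$, $e-s\notin H$, \emph{and} $e$ not expressible as $\beta+\gamma$ with $\beta,\gamma\in H$, $\beta>r$, $\gamma>s$. This last condition can genuinely fail for a carelessly chosen witness: in $H=\langle 3,7\rangle$ with $K=\F_{5}$, the triple $(h,b,c)=(14,3,3)$ satisfies $h>b+c$ and $h-3=11\notin H$, yet $X^{6}+X^{14}=(X^{3}+2X^{7})(X^{3}-2X^{7})$ with both factors mapping to $3$ under $\phi$, so this witness does \emph{not} refute $\mathbf{T2}$ (even though $X^{3}\nmid X^{6}+X^{14}$ in $K\ldbrack H\rdbrack$).

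The paper circumvents this by taking $h$, $b$, $c$ \emph{minimal} among all witnesses with $h>b+c$ and $h-b,h-c\notin H$ (note it also arranges that \emph{both} differences fall outside $H$, which your sketch needs as well, since otherwise one of the two cases $\beta=r$ or $\gamma=s$ survives). In the example above the minimal witness is $(7,3,3)$, and $7$ is indeed not a sum of two elements of $H$ both exceeding $3$, so the two-term element $X^{6}+X^{7}$ does the job. To complete your proof you would have to show in general that a minimal (or otherwise suitably chosen) witness satisfies your narrowness condition $e<\min\lbrace r+s',\,r'+s\rbrace$, or some weaker condition still excluding $e$ from $\supp(b)+\supp(c)$; until then the contraposition is not established, and the proposal is incomplete in the only nontrivial direction of the equivalence.
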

\begin{proof}
Obviously, $\phi$ is a homomorphism. First we assume that $H$ is smooth. Now we must show the two axioms from \autoref{def:transfer}. $\mathbf{T1}$ is obvious, since $\phi$ is surjective. For $\mathbf{T2}$, let $u=\sum_{h\in H}u_hX^h\in K[H]$ and $b,\,c\in H$ with $\phi(u)=b+c$. We set $v=u_{\phi(u)}X^b$ and $w=X^c+\sum_{h\in H}u_hu_{\phi(u)}^{-1}X^{h-b}$. Then $v,\,w\in K\ldbrack H\rdbrack$, $vw=u$, and $\phi(w)=c$.\\
Second, we assume that $H$ is not smooth and show that axiom $\mathbf{T2}$ of \autoref{def:transfer} fails. Since $H$ is not smooth, there are $h,\,b,\,c\in H$ with $h>b+c$ $h-b,\,h-c\notin H$ and all three elements are minimal with this property. Then set $u=X^{b+c}+X^h$. Now $\phi(u)=b+c$. Let $h'\in\lbrace b,c\rbrace$ and set $v=X^{h'}$. Then $v\nmid u$ in $K\ldbrack H\rdbrack$, and thus $\mathbf{T2}$ does not hold.
\end{proof}

\begin{proposition}
\label{5.9}
Let $H\subset\N_0$ be a numerical monoid. Then the following are equivalent.
\begin{enumerate}
 \item \label{5.9.1} There is $m\in N$ such that $H=[m,m+1,\ldots,2m-1]$.
 \item \label{5.9.2} $\mathsf G(H)$ is an interval.
 \item \label{5.9.3} $H$ is smooth.
\end{enumerate}
\end{proposition}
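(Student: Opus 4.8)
The plan is to prove $\ref{5.9.1}\Leftrightarrow\ref{5.9.2}$, $\ref{5.9.1}\Rightarrow\ref{5.9.3}$, and $\ref{5.9.3}\Rightarrow\ref{5.9.2}$, which together yield all the equivalences. (Note first that $H\subset\N_0$ is automatically reduced, atomic, and totally ordered, so that smoothness is even defined for it.) The crux is that, writing $m=\min\nonu{H}$ for the multiplicity of $H$, both \ref{5.9.1} and \ref{5.9.2} are equivalent to the single normal form
\[
H=\{0\}\cup\{n\in\N\mid n\geq m\},
\]
which I would establish first, leaving the two substantive implications for last.

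For $\ref{5.9.1}\Leftrightarrow$(normal form) it is enough to verify that the monoid $[m,m+1,\ldots,2m-1]$ coincides with the set $\{0\}\cup[m,\infty)$. One inclusion is clear, as $\{0\}\cup[m,\infty)$ is itself a submonoid of $\N_0$ and contains the listed generators; for the reverse one uses that this monoid is closed under adding $m$ and that the translates $[m,2m-1]+km$, $k\in\N_0$, tile $[m,\infty)$. For $\ref{5.9.2}\Leftrightarrow$(normal form): $\mathsf G(\{0\}\cup[m,\infty))=[1,m-1]$ is an interval, and conversely, since $1\in\mathsf G(H)$ whenever $H\neq\N_0$, a nonempty interval $\mathsf G(H)$ must equal $[1,f]$ with $f=\max\mathsf G(H)$, that is, $H\cap[1,f]=\emptyset$, so $H=\{0\}\cup[f+1,\infty)$. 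The degenerate case $H=\N_0$ (where $m=1$ and $\mathsf G(H)=\emptyset$) is covered throughout.

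There remain $\ref{5.9.1}\Rightarrow\ref{5.9.3}$ and $\ref{5.9.3}\Rightarrow\ref{5.9.2}$. For the former, assume $H=\{0\}\cup[m,\infty)$ and take any $h\in H$ and $b,c\in\nonu{H}$ with $h\geq b+c$; then $b,c\geq m$, so $h-b\geq c\geq m$, whence $h-b\in H$, and thus $H$ is smooth. For the latter I would argue by contraposition: if $\mathsf G(H)$ is not an interval then, with $m$ and $f=\max\mathsf G(H)$ as above, some element of $H$ lies in $[1,f]$, so $m\leq f$, and since $m\in H$ but $f\notin H$ in fact $m<f$; now set $h=f+m$ and $b=c=m$. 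Then $b,c\in\nonu{H}$, we have $h\in H$ because $h>f$, and $h=f+m\geq 2m=b+c$ because $f\geq m$, whereas $h-b=f\notin H$, so the smoothness condition fails.

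The argument is wholly elementary and I expect no serious obstacle. The points that need a little care are pinning down the normal form $H=\{0\}\cup[m,\infty)$, correctly handling the Frobenius number and the degenerate case $H=\N_0$, and reading the decomposition condition in the definition of smoothness as being tested only on non-unit $b,c$ (equivalently, since $H$ is reduced, on non-zero $b,c$) --- which is precisely the part of axiom $\mathbf{T2}$ that is used in \autoref{specialtransfer}, $\mathbf{T2}$ being trivially satisfied whenever one of the two factors is a unit.
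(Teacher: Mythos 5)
Your proof is correct, but for the substantive implication it takes a genuinely different route from the paper. The paper proves \ref{5.9.3}~$\Rightarrow$~\ref{5.9.1} directly by looking at the two smallest atoms $k<k+n$ of $H$ and running a three-case analysis on $n$ versus $k$; you instead reduce \ref{5.9.1} and \ref{5.9.2} to the common normal form $H=\lbrace 0\rbrace\cup[m,\infty)$ and prove \ref{5.9.3}~$\Rightarrow$~\ref{5.9.2} by contraposition, exhibiting the explicit witness $h=f+m$, $b=c=m$ (with $f=\max\mathsf G(H)$ the Frobenius number and $m$ the multiplicity) against smoothness. Your argument is shorter, avoids the case distinction entirely, and sidesteps the divisibility considerations in the paper's Case 3. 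Two remarks. First, your reading of the smoothness condition as quantifying only over non-unit $b,\,c$ is the right one and deserves to be made explicit, exactly as you do: under the literal quantification over all $b,\,c\in H$ (allowing $c=0$) the implication \ref{5.9.1}~$\Rightarrow$~\ref{5.9.3} would fail for every $m\geq 2$ (take $h=m+1$, $b=m$, $c=0$), and both the paper's own proof of \autoref{5.9} and the use of smoothness in \autoref{specialtransfer} only ever invoke the condition with non-unit $b,\,c$. Second, your contrapositive silently uses that $\mathsf G(H)$ is finite (so that $f$ exists and $f+m\in H$); this is immediate from the definition of a numerical monoid, but it is worth saying, since it is the only place where you use more than the submonoid structure.
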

\begin{proof}
Obviously, \ref{5.9.1} and \ref{5.9.2} are equivalent, and the implication from \ref{5.9.1} to \ref{5.9.3} is also clear.\\
\textbf{\ref{5.9.3} $\Rightarrow$ \ref{5.9.1}.} If $|\mathcal A(H)|=1$, then, trivially, $H=[1]=\N_0$. Now let $|\mathcal A(H)|>1$ and $k,\,n\in\N$ be minimal such that $[k,k+n]\cap\mathcal A(H)=\lbrace k,k+n\rbrace$. We proceed by distinguishing three cases.\\
\textbf{Case 1.} $n>k$. Then $2k<k+n$, and therefore $(k+n)-k=n\in H$. This is a contradiction to $k+n\in\mathcal A(H)$.\\
\textbf{Case 2.} $n=k$. Then $k+n=2k$ and this is again a contradiction to $k+n\in\mathcal A(H)$.\\
\textbf{Case 3.} $n<k$. By the minimality of $k$ and $n$, we have that $[k,k+n]\cap H=\lbrace k,k+n\rbrace$. Let $m\in\N$ be minimal such that $mn\geq k$. If $mn>k$, then $mn\in (k,kn)$, a contradiction; therefore $mn=k$, i.e., $n\mid k$. Let $x\in H\setminus [mn,(m+1)n,\ldots (2m-1)n]\cap [0,2mn)$. Additionally, $x>(m+a)n$ for some $a\in\N$, since $x\notin(mn,(m+1)n)$. Thus we find $x\in((m+1)n,2mn)\setminus n\N$. Then there exist $j\in[m+1,2m-1]$ and $l\in[1,n-1]$ such that $x=jn+l$. Let $j'\in[m-1,2m-1]$ be such that $j+j'=im$ for $i\geq 2$. Then we find $imn<imn+l=x+j'n\in H$. Thus we find $x+j'm-mn=(i-1)mn-l\in H$. This can now be repeated until we reach $mn+l\in H$---a contradiction. Thus $H=[mn,(m+1)n,\ldots,(2m+1)n]$ and, since $H$ is a numerical monoid, we have $\gcd(\mathcal A(H))=1$, and therefore $n=1$. Now the assertion follows.
\end{proof}

For the study of semigroup rings, the situation is even more difficult. Since there is then no transfer homomorphism $R=F[H]\rightarrow H$; see \cite[Example 3.7.3, Special Case 3.3]{MR2194494}. Thus---even after applying the transfer principles in order to be in an easier situation---it is necessary to compute all the invariants of non-unique factorization for more general $T$-block monoids, $\mathcal B(G,T,\iota)$, where neither $T$ nor $G$ are trivial.
In the upcoming subsections, we exploit the results from \cite{MR2243561}, \cite{MR2254337}, \cite[Proposition 16]{phil10} (repeated as \autoref{2.7}), \cite[Theorem 19.2]{phil10}, and \autoref{sec:cmon} (mainly \autoref{3.2}) together with recent programming techniques (see \cite{Ge-Li-Ph11} and \cite[Section 8]{Or-Ph-Sa-Sc10}) and parallelization to explicitly compute various arithmetical invariants, namely, the elasticity, the catenary degree, the monotone catenary degree, and a bound for the tame degree of the $T$-block monoids associated with the studied domains, and therefore for the domains themselves.

\smallskip
\subsection{Preliminaries about zero-sum sequences and \boldmath $T$-block monoids}
\label{sec:Tblock}
\mbox{}\\
In order to be able to describe the set of atoms of a $T$-block monoid precisely, we use the terminology of sequences over finite abelian groups.

For our algorithmic considerations in the forthcoming sections, it will be very useful to have some sort of order defined on the elements of  a finite abelian group $G$. By the structure theorem for finitely generated abelian groups, there are uniquely determined $r\in\N_0$ and $n_1,\ldots,n_r\in\N$ such that there is a group isomorphism $\varphi:G\rightarrow\Z/n_1\Z\times\ldots\times\Z/n_r\Z$ and $1<n_1\mid\ldots\mid n_r$. For $i\in[1,r)$, we choose $[0,n_i)$ as a system of representatives for $\Z/n_i\Z$. Now we can compare two elements $g_1,\,g_2\in G$ by comparing $\varphi(g_1)$ and $\varphi(g_2)$ with respect to the lexicographic order. For short, we simply write $g_1\leq g_2$ respectively $g_1\geq g_2$.

In particular, in \autoref{computing:section}, we will need some kind of coordinate representation for the elements of a $T$-block monoid, i.e., a monoid isomorphism mapping a $T$-block monoid onto a submonoid of $\Z^m\times\Z/n_1\Z\times\ldots\times\Z/n_r\Z$ for some $m,\,r\in\N_0$ and $n_1,\ldots,n_r\in\N$.
Let $G$ be a finite abelian group, $T$ a finitely generated monoid, and $\iota:T\rightarrow G$ a homomorphism. Let $T=D_1\times\ldots\times D_r$ be a product of finitely primary monoids $D_i\subset[p_1^{(i)},\ldots,p_{r_i}^{(i)}]\times\wmal{D_i}=\widehat{D_i}$ where $r_i\in\N$, and the $\wmal{D_i}$ are finitely generated abelian groups for $i\in[1,r]$. Then there are uniquely determined $l_i,\,k_i\in\N_0$ such that there is an isomorphism $\phi_i:\wmal{D_i}\rightarrow\Z^{l_i}\times\Z/n_1^{(i)}\Z\times\ldots\times\Z/n_{k_i}^{(i)}$ with $1<n_1^{(i)}\mid\ldots\mid n_{k_i}^{(i)}$ for $i\in[1,r]$. This isomorphism can be extended to an isomorphism $\bar\phi_i:\widehat{D_i}\rightarrow N_0^{r_i}\times\phi_i(\wmal{D_i})$ for $i\in[1,r]$. Now there is an isomorphism $\phi=\bar\phi_1\times\ldots\times\bar\phi_r:\widehat T\rightarrow\bar\phi_1(\widehat{D_1})\times\ldots\times\bar\phi_r(\widehat{D_r})$. This again can be extended to an isomorphism $\bar\varphi:\mathcal F(G)\times\widehat T\rightarrow N_0^{|G|}\times\phi(\widehat T)$. Now we can define the desired isomorphism by restriction of $\bar\varphi$ to the $T$-block monoid $\mathcal B(G,T,\iota)$ as follows:
\begin{equation}
\label{isormorphism}
\varphi=\bar\varphi|\mathcal B(G,T,\iota):\mathcal B(G,T,\iota)\rightarrow\bar\varphi(\mathcal B(G,T,\iota))\subset\N_0^{|G|}\times\prod_{i=1}^r\left(\N_0^{r_i}\times\Z^{l_i}\times\prod_{j=1}^{k_i}\Z/n_j^{(i)}\Z\right)
\,.
\end{equation}

\smallskip
\subsection{The set of atoms \boldmath $\mathcal A(G)$ of a block monoid}
\mbox{}\\
Based on ideas from \cite{Ge-Li-Ph11}, we give an algorithm for the computation of the set of atoms $\mathcal A(G)$ for a finite additive abelian group $G$. The problem of computing $\mathcal A(G)$ grows exponentially in terms of $|G|$, but, for very small groups as the ones involved in \autoref{example:section}, it can be easily performed---sometimes even by hand. Unfortunately, we have to do some sort of brute force search in the set of all $S\in\mathcal F(G)$ with $|S|\leq\mathsf D(G)$. But with the algorithm presented below, we can avoid most of the redundant checks and therefore speed up the computation dramatically.

\begin{algorithm}
\caption{Recursive Atom Search: $A\leftarrow\mathrm{RAS}(A,S,\Sigma,B)$}
\label{ras}
\begin{algorithmic}
\FORALL{$g\in B$}
\STATE $S'\leftarrow Sg$
\IF{$g\leq -\sigma(S')$}
\STATE $A\leftarrow A\cup\lbrace S'(-\sigma(S'))\rbrace$
\ENDIF
\STATE $\Sigma'\leftarrow\Sigma$
\STATE $B'\leftarrow\emptyset$
\FORALL{$g'\in B$}
\IF{$g+g'\in\Sigma$}
\STATE $\Sigma'\leftarrow\Sigma'\cup\lbrace g'\rbrace$
\ELSE
\STATE $B'\leftarrow B'\cup\lbrace g'\rbrace$
\ENDIF
\ENDFOR
\IF{$|B'|>0$}
\STATE $A\leftarrow\mathrm{RAS}(A,S',\Sigma',B')$
\ENDIF
\ENDFOR
\RETURN $A$
\end{algorithmic}
\end{algorithm}

\begin{algorithm}
\caption{Atoms Computation Algorithm 1: $\mathcal A(G) \leftarrow \mathrm{ACA1}(G)$}
\label{aca1}
\begin{algorithmic}
\STATE $A\leftarrow\lbrace\mathbf 0\rbrace$
\FORALL{$g\in G\setminus\lbrace\mathbf 0\rbrace$}
\IF{$g\leq -g$}
\STATE $A\leftarrow A\cup\lbrace g(-g)\rbrace$
\ENDIF
\STATE $\Sigma\leftarrow\lbrace\mathbf 0,g\rbrace$
\STATE $B\leftarrow G\setminus\lbrace\mathbf 0,-g\rbrace$
\STATE $S\leftarrow g$
\IF{$|B|>0$}
\STATE $A\leftarrow\mathrm{RAS}(A,S,\Sigma,B)$
\ENDIF
\ENDFOR
\RETURN $A$
\end{algorithmic}
\end{algorithm}

Since modular arithmetic on vectors with multiple coordinates is quite inefficient, it is necessary for a fast execution of the RAS, \autoref{ras}, to pre-compute the sums $g+g'$. This can be done once in the ACA1, \autoref{aca1}, before the main loop. For additional details on speeding up these types of algorithms by special alignment of the pre-computed data and on the parallelization aspects, the reader is referred to \cite[Section 3]{Ge-Li-Ph11}. 

\smallskip
\subsection{The set of atoms of a \boldmath $T$-block monoid}
\label{atomsofB:section}

\begin{lemma}
Let $G$ be a finite additive abelian group, $T$ a reduced atomic monoid, $\iota:T\rightarrow G$ a homomorphism, and $\mathcal B(G,T,\iota)\subset\mathcal F(G)\times T$ the $T$-block monoid over $G$ defined by $\iota$. Furthermore, suppose each class in $G$ contains some $p\in P$, and let $\bar\iota:\mathsf Z(T)\rightarrow\mathcal F(G)$ be the homomorphism generated by the extension of $\iota$ onto $\mathsf Z(T)$ such that, for a factorization $z=a_1\mdots a_n\in\mathsf Z(T)$ with $a_i\in\mathcal A(T)$ for $i\in[1,n]$, we have $\bar\iota(z)=\iota(a_1)\mdots\iota(a_n)$.\\
Then we have
\begin{multline}
\label{atomsofB}
 \mathcal A(\mathcal B(G,T,\iota))= \\
\lbrace
S\pi(z)\mid S\in\mathcal F(G),\,z\in\mathsf Z(T),\,S\bar\iota(z)\in\mathcal A(G),
\nexists n\geq 2:\,
\exists S_i\in\mathcal F(G),\,z_i\in\mathsf Z(T) \\
\mbox{with }S_i\bar\iota(z_i)\in\mathcal A(G)\mbox{ for }i\in[1,n]:\,
S_1\pi(z_1)\mdots S_n\pi(z_n)=S\pi(z)
\rbrace
\end{multline}
\end{lemma}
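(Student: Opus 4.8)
The plan is to reduce the identity \eqref{atomsofB} to a single structural statement and then obtain both inclusions from it. Call an element of $\mathcal B(G,T,\iota)$ \emph{good} if it can be written as $S\pi(z)$ with $S\in\mathcal F(G)$, $z\in\mathsf Z(T)$ and $S\bar\iota(z)\in\mathcal A(G)$; a good element does lie in $\mathcal B(G,T,\iota)$ because $\sigma(S\bar\iota(z))=\sigma(S)+\iota(\pi(z))=\mathbf 0$, and it is a non-unit since $\mathcal A(G)$ contains only nontrivial sequences while $\mathcal B(G,T,\iota)\subset\mathcal F(G)\times T$ is reduced. The whole statement follows once we prove the \textbf{Key claim}: every atom of $\mathcal B(G,T,\iota)$ is good.

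To prove the key claim I would start from an atom $A=St$ with $\sigma(S)+\iota(t)=\mathbf 0$ and pick some $z\in\mathsf Z(T)$ with $\pi(z)=t$ (possible as $T$ is atomic), so that $S\bar\iota(z)$ is a zero-sum sequence over $G$. If this sequence is not minimal it has a nontrivial proper zero-sum subsequence, and we may write $S\bar\iota(z)=UV$ with $U,V$ nontrivial zero-sum sequences. The decisive step is a \emph{lifting}: since $\mathcal F(G)$ is free, $S\bar\iota(z)=UV$ merely partitions the underlying multiset of terms, and assigning each term to $U$ or to $V$ yields $S_1,S_2\in\mathcal F(G)$ (the parts of $S$ going to $U$, resp. to $V$) together with $z_1,z_2\in\mathsf Z(T)$ (factorizations collecting exactly those atoms $a$ of $z$ whose image $\iota(a)$ was sent to $U$, resp. to $V$). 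Here freeness of $\mathsf Z(T)$ is what makes $z_1,z_2$ available: for each $g\in G$ one picks the prescribed number of atoms $a$ of $z$ with $\iota(a)=g$. Then $S=S_1S_2$, $z=z_1z_2$, $U=S_1\bar\iota(z_1)$ and $V=S_2\bar\iota(z_2)$; since $\sigma(U)=\sigma(V)=\mathbf 0$, both $S_1\pi(z_1)$ and $S_2\pi(z_2)$ lie in $\mathcal B(G,T,\iota)$, and they are non-units because $U,V$ are nontrivial. As their product equals $S\pi(z)=A$, this contradicts $A$ being an atom; hence $S\bar\iota(z)\in\mathcal A(G)$ and $A$ is good.

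Granting the key claim, ``$\subseteq$'' is immediate: an atom $A$ is good, and if we had $A=S_1\pi(z_1)\mdots S_n\pi(z_n)$ with $n\ge 2$ and every $S_i\bar\iota(z_i)\in\mathcal A(G)$, then each factor would be a non-unit of $\mathcal B(G,T,\iota)$, contradicting atomicity of $A$; so $A$ belongs to the right-hand side of \eqref{atomsofB}. For ``$\supseteq$'', let $B=S\pi(z)$ belong to the right-hand side; it is a non-unit, so were it not an atom we could write $B=B_1B_2$ with $B_1,B_2$ non-units. Writing $B_i=S_i\pi(z_i)$ via atomicity of $T$, each $S_i\bar\iota(z_i)$ is a nontrivial zero-sum sequence; decomposing it into minimal zero-sum sequences (recall $\mathcal A(G)=\mathcal A(\mathcal B(G))$) and lifting each minimal factor exactly as above expresses $B$ as a product of at least two good elements, contradicting the defining property of the right-hand side. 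Hence $B$ is an atom, and the two inclusions together give \eqref{atomsofB}.

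The only genuine obstacle is to make the lifting step precise and well-posed: one has to check that the multiset underlying $S\bar\iota(z)$ can be refined simultaneously through its two distinguished decompositions $S\cdot\bar\iota(z)$ and $U\cdot V$, and that the resulting partition of the $\iota$-images of the atoms of $z$ really comes from a factorization of $z$ in the free monoid $\mathsf Z(T)$. This is elementary once the freeness of $\mathcal F(G)$ and of $\mathsf Z(T)$ is exploited, and no further hypothesis enters the argument; in particular the assumption that every class of $G$ meets $P$, inherited from the ambient set-up, is not used here. It would probably be cleanest to isolate the lifting as a short auxiliary lemma before carrying out the proof.
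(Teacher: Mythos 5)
Your proof is correct and follows essentially the same route as the paper's, which likewise splits the identity into the claim that every atom has the form $S\pi(z)$ with $S\bar\iota(z)\in\mathcal A(G)$ plus the indecomposability condition, but dismisses each step as ``clear'' or ``obvious''. Your lifting argument through the two decompositions of $S\bar\iota(z)$ supplies exactly the detail the paper omits, and your remark that the hypothesis on $P$ is never used (indeed $P$ is not even introduced in the statement) is accurate.
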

\begin{proof}
Clearly, every atom $a\in\mathcal A(\mathcal B(G,T,\iota))$ is of the form $a=S\pi(z)$ with $S\in\mathcal F(G)$, $z\in\mathsf Z(T)$, and $S\bar\iota(z)\in\mathcal A(G)$. Now suppose we have $n\in[2,\mathsf D(G)]$, $S_i\in\mathcal F(G)$, $z_i\in\mathsf Z(T)$, $S_i\bar\iota(z_i)\in\mathcal A(G)$ for $i\in[1,n]$ and $S\pi(z)=S_1\pi(z_1)\mdots S_n\pi(z_n)$. Obviously then, $a\notin\mathcal A(\mathcal B(G,T,\iota))$. Now the other inclusion is obvious.
\end{proof}
In general, it is very hard to calculate $\mathcal A(\mathcal B(G,T,\iota))$ explicitly by the characterization in \eqref{atomsofB}. But if we restrict ourselves to a finite group $G$ and a finitely generated reduced monoid $T$ such that $\mathcal A(G)$, $\mathcal A(T)$, and $\iota(a)$ for $a\in\mathcal A(T)$ are all known explicitly, we can formulate the ACA2, \autoref{aca2}, for the computation of the set of atoms of a $T$-block monoid.
\begin{algorithm}
\caption{Atoms Computation Algorithm 2: $\mathcal A(\mathcal B(G,T,\iota))\leftarrow\mathrm{ACA2}(G,T,\mathcal A(G),\mathcal A(T),\iota)$}
\label{aca2}
\begin{algorithmic}
\STATE $A\leftarrow\emptyset$
\STATE $D\leftarrow 0$
\FORALL{$S\in\mathcal A(G)$}
\IF{$|S|>D$}
\STATE $D\leftarrow |S|$
\ENDIF
\STATE $A\leftarrow A\cup\lbrace (S,1)\rbrace$
\ENDFOR
\STATE $F_0\leftarrow\emptyset$
\FORALL{$a\in\mathcal A(T)$}
\FORALL{$(S,1)\in A$}
\IF{$\iota(a)\mid S$}
\STATE $F_0\leftarrow F_0\cup\lbrace (\iota(a)^{-1}S,a)\rbrace$
\ENDIF
\ENDFOR
\ENDFOR
\STATE $E\leftarrow\emptyset$
\STATE $n\leftarrow 1$
\WHILE{$n<D$ and $F_{n-1}\neq\emptyset$}
\STATE $E\leftarrow E\cup F_{n-1}$
\STATE $E\leftarrow EF_0$
\STATE $F_n\leftarrow\emptyset$
\FORALL{$a\in\mathcal A(T)$}
\FORALL{$(S,b)\in A$}
\IF{$\iota(a)\mid S$}
\STATE $F_n\leftarrow F_n\cup\lbrace (\iota(a)^{-1}S,ab)$
\ENDIF 
\ENDFOR
\ENDFOR
\STATE $n\leftarrow n+1$
\ENDWHILE
\RETURN $A\cup F_0\cup\ldots\cup F_{n-1}$
\end{algorithmic}
\end{algorithm}

\smallskip
\subsection{Computing arithmetical invariants of a \boldmath $T$-block monoid}
\label{computing:section}
\mbox{}\\
Throughout this section, we implicitly use the isomorphism defined in~\eqref{isormorphism}. Thus we only have to work with submonoids
\[
S\subset\Z^m\times\Z/n_1\Z\times\ldots\times\Z/n_r\Z
\quad\mbox{with }m,\,r\in\N_0\mbox{ and }n_1,\ldots,n_r\in\N
\]
such that $S\cong\varphi(\mathcal B(G,T,\iota))$, where $G$ is an additively written finite abelian group, $T$ is a product of finitely many reduced finitely primary monoids of rank $1$, $\iota:G\rightarrow T$ is a homomorphism, and $\varphi$ is the isomorphism defined in~\eqref{isormorphism}. If $T$ is not the product of finitely many reduced finitely primary monoids of rank $1$, then $T$ would not be finitely generated.
Now we know $\mathcal A(S)$ explicitly, since, obviously, $\mathcal A(S)=\varphi(\mathcal A(\mathcal B(G,T,\iota)))$ and $\mathcal A(\mathcal B(G,T,\iota))$ can be computed explicitly by the ACA2; see \autoref{aca2}.

For the computation of the tame degree, we use the definition of the distance of factorizations and \cite[Theorem 19.2]{phil10}; for additional reference on this computation, see \cite[Section 4]{MR2243561}.

Now we are ready to describe the computation step by step.

\smallskip
\subsubsection{Finding the elements of $\mathcal A(\sim_S)$}
\label{step:1}
\mbox{}\\
The first step is finding the elements of $\mathcal A(\sim_S)$ explicitly. Unfortunately, this is a very hard task. Probably, the most efficient way is the following one as described in \cite[Sections 1 and 2]{MR2254337}.
\begin{enumerate}
 \item Since we know $\mathcal A(S)$ explicitly, we can write the atoms of $S$ in their coordinates as vectors:
\[
\mathcal A(S)=\lbrace (a_1^{(1)},\ldots,a_m^{(1)},a_{m+1}^{(1)}\mod n_1,\ldots,a_{m+r}^{(1)}\mod n_r),\ldots\rbrace
\,.
\]
 \item By \cite[Section 2]{MR2254337}, finding the elements of $\mathcal A(\sim_S)$ is equivalent to determining the minimal positive solutions of the following system of linear diophantine equations:
\begin{equation}
\label{eq:system}
\begin{array}{lccclclcccll}
 x_1a_1^{(1)}        & + & \ldots & + & x_ka_1^{(k)}        & - & y_1a_1^{(1)}     & - & \ldots & - & y_ka_1^{(k)}     & = 0 \\
 \vdots                   &    &          &    & \vdots                  &    & \vdots                &   &          &   & \vdots                & \vdots \\
 x_1a_m^{(1)}       & + & \ldots & + & x_ka_m^{(k)}       & - & y_1a_m^{(1)}     & - & \ldots & - & y_ka_m^{(k)}     & = 0 \\
 x_1a_{m+1}^{(1)} & + & \ldots & + & x_ka_{m+1}^{(k)} & - & y_1a_{m+1}^{(1)} & - & \ldots & - & y_ka_{m+1}^{(k)} & \equiv 0\mod n_1 \\
 \vdots                   &    &          &    & \vdots                   &    & \vdots                &   &          &   & \vdots                & \vdots \\
 x_1a_{m+r}^{(1)} & + & \ldots & + & x_ka_{m+r}^{(k)} & - & y_1a_{m+r}^{(1)} & - & \ldots & - & y_ka_{m+r}^{(k)} & \equiv 0\mod n_r
\end{array}
\end{equation}
We write a solution $(x_1,\ldots,x_k,y_1,\ldots,y_k)$ as $((x_1,\ldots,x_k),(y_1,\ldots,y_k))$.
 \item Again, by \cite[Section 2]{MR2254337} and \cite[Section 2]{MR1484089}, finding the set of minimal positive solutions is equivalent to finding the set of minimal positive solutions for the following enlarged system and then projecting back by the map and removing the zero element (if appearing after the projection) from the set of solutions:
\begin{equation}
\label{system}
\begin{array}{lccclccclcll}
 x_1a_1^{(1)}     & + & \ldots & - & y_1a_1^{(1)}     & - & \ldots &   &            &   & & = 0 \\
 \vdots                &    &          &   & \vdots                &    &          &  &            &   & & \vdots \\
 x_1a_m^{(1)}     & + & \ldots & - & y_1a_m^{(1)}     & - & \ldots &   &            &   & & = 0 \\
 x_1a_{m+1}^{(1)} & + & \ldots & - & y_1a_{m+1}^{(1)} & - & \ldots & + & x_{k+1}n_1 & - & y_{k+1}n_1 & = 0 \\
 \vdots & & & & \vdots & & & & \vdots & & \vdots & \vdots \\
 x_1a_{m+r}^{(1)} & + & \ldots & - & y_1a_{m+r}^{(1)} & - & \ldots & + & x_{k+r}n_r & - & y_{k+r}n_r & = 0
\end{array}
\end{equation}
\[
 \Phi:\left\lbrace
\begin{array}{ccc}
\N_0^{k+r}\times\N_0^{k+r} &\rightarrow &\N_0^k\times\N_0^k \\
((x_1,\ldots,x_{k+r}),(y_1,\ldots,y_{k+r})) &\mapsto &((x_1,\ldots,x_k),(y_1,\ldots,y_k)).
\end{array}
\right.
\]
One of the most efficient algorithms for finding these solutions is due to Contejean and Devie; see~\cite{MR1283022}. Nevertheless, this might take a very long time since the problem of determining the set of all minimal non-negative solutions of a system of linear diophantine equations is well known to be NP-complete.
\end{enumerate}

\smallskip
\subsubsection{Removing unnecessary elements}
\label{step:2}
\mbox{}\\
Clearly, elements of the form $((0,\ldots,0,1,0,\ldots,0),$ $(0,\ldots,0,1,0,\ldots,0))$ are minimal solutions. But as elements of $\mathcal A(\sim_S)$, these elements do not carry any information about the arithmetic of $S$. Therefore we may simply drop them. Since, for any two factorizations, $(x,y)\in\mathsf Z(S)$ is equivalent to $(y,x)\in\mathsf Z(S)$, we may also reduce the number of pairs by a factor of two. This smaller set will be denoted by $\mathcal A(\sim_S)^*=\lbrace ((x_1,\ldots,x_k),(y_1,\ldots,y_k)),\ldots\rbrace$.

\smallskip
\subsubsection{Computing the elasticity}
\label{step:3}
\mbox{}\\
By our finiteness assumptions on $T$, i.e., since $T$ is finitely generated, we know this set is finite. Thus we can simply compute the elasticity using \cite[Proposition 14.2]{phil10} as follows:
\[
\rho(S)=\max\left\lbrace\left.\frac{x_1+\ldots+x_k}{y_1+\ldots+y_k},\frac{y_1+\ldots+y_k}{x_1+\ldots+x_k}\right| ((x_1,\ldots,x_k),(y_1,\ldots,y_k))\in\mathcal A(\sim_S)^*\right\rbrace
\,.
\]

\smallskip
\subsubsection{Computing the catenary degree}
\label{step:4}
\mbox{}\\
By \autoref{2.7}.\ref{2.7.2}, we need only consider elements $a\in S$ such that their factorizations appear as part of an element of $\mathcal A(\sim_S)$ and such that their sets of factorizations consist of more than one $\mathcal R$-equivalence class. Then we get the catenary degree by taking the maximum over $\mu(a)$ for all those $a\in S$.

\begin{algorithm}
\caption{Recursive $\mathcal R$-Class Finder: $\mathcal R\leftarrow\mathrm{RCF}(\mathcal R,\mathsf Z=\lbrace z_1,\ldots,z_n\rbrace)$}
\label{rcf}
\begin{algorithmic}
\STATE $r\leftarrow\lbrace z_1\rbrace$
\STATE $\mathsf Z\leftarrow\mathsf Z\setminus\lbrace z_1\rbrace$
\STATE $n\leftarrow n-1$
\STATE $\mathsf Z=\lbrace z_1,\ldots,z_n\rbrace$ \COMMENT{renumber}
\STATE $i\leftarrow 1$
\WHILE{$i<n$}
\FOR{$i=1$ to $n$}
\IF{$\gcd(z_i,x)\neq 1$ for some $x\in r$}
\STATE $r\leftarrow r\cup\lbrace z_i\rbrace$
\STATE $\mathsf Z\leftarrow\mathsf Z\setminus\lbrace z_i\rbrace$
\STATE $n\leftarrow n-1$
\STATE $\mathsf Z=\lbrace z_1,\ldots,z_n\rbrace$ \COMMENT{renumber}
\STATE \bfseries break
\ENDIF
\ENDFOR
\ENDWHILE
\STATE $\mathcal R\cup\lbrace r\rbrace$
\IF{$\mathsf Z\neq\emptyset$}
\STATE $\mathcal R\leftarrow\mathrm{RCF}(\mathcal R,\mathsf Z)$
\ENDIF
\RETURN $\mathcal R$
\end{algorithmic}
\end{algorithm}

\begin{algorithm}
\caption{Catenary degree Computation Algorithm: $\mathsf c(S)\leftarrow\mathrm{CCA}(\mathcal A(S),\mathcal A(\sim_S)^*)$}
\label{cca}
\begin{algorithmic}
\STATE $A\leftarrow\emptyset$
\FORALL{$(x,y)\in\mathcal A(\sim_S)^*$}
\STATE $A\leftarrow A\cup\lbrace\pi(x)\rbrace$
\ENDFOR
\STATE $c\leftarrow 0$
\FORALL{$a\in A$}
\STATE $\mathcal R_a\leftarrow\mathrm{RCF}(\mathsf Z(a))$
\IF{$|\mathcal R_a|>1$}
\STATE $\mu\leftarrow\min\lbrace |x|\mid\mathcal R_a\rbrace$
\IF{$c<\mu$}
\STATE $c\leftarrow\mu$
\ENDIF
\ENDIF
\ENDFOR
\RETURN $c$
\end{algorithmic}
\end{algorithm}

\smallskip
\subsubsection{Computing the tame degree}
\label{step:5}
\mbox{}\\
After having computed $\mathsf Z(a)$ for all $a\in S$ such that $\mathcal A_a(\sim_S)\neq\emptyset$, we can apply \cite[Theorem 19.1]{phil10} for every $u\in\mathcal A(S)$. Since there are only finitely many, we get the tame degree as the maximum of these values.

\smallskip
\subsubsection{Computing the monotone catenary degree}
\label{step:6}
\mbox{}\\
For computing the monotone catenary degree, we compute the equal catenary degree $\ceq(S)$ and the adjacent catenary degree $\cadj(S)$.
We start with the adjacent catenary degree and proceed like in~\ref{step:1}. We use the fact that $\sim_{S,\mathrm{mon}}=\lbrace (x,y)\in\sim_S\mid |x|\leq |y|\rbrace$ and again \cite[Section 2]{MR2254337}. Now finding the elements of $\mathcal A(\sim_{S,\mathrm{mon}})$ is equivalent to determining the minimal positive solutions of a system of linear diophantine equations.

Before we construct this finite system of linear diophantine equations explicitly, we formulate a short lemma.

\begin{lemma}
\label{5.11}
Let $H$ be a finitely generated monoid.\\
Then $\sim_{H,\mathrm{mon}}$ is a finitely generated Krull monoid.
\end{lemma}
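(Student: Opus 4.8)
The plan is to realise $\sim_{H,\mathrm{mon}}$, up to monoid isomorphism, as a \emph{saturated} submonoid of a free abelian monoid of \emph{finite} rank; the Krull property and finite generation then follow from the standard structure theory of such monoids, exactly as they do for $\sim_H$ and $\sim_{H,\mathrm{eq}}$. The point, and the reason one cannot argue directly inside $\sim_H$, is the remark preceding the lemma: $\sim_{H,\mathrm{mon}}$ is \emph{not} saturated in $\sim_H$, because the defining constraint $|x|\le|y|$ is an inequality and a complementary factor $(x'',y'')$ of a monotone relation can have $|x''|>|y''|$. The remedy is to introduce a slack coordinate $s\in\N_0$ recording $|y|-|x|$, which turns the inequality into the equation $|x|+s=|y|$.

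Since the arithmetic of $H$ --- and hence $\mathsf Z(H)$, $\pi_H$, and $\sim_{H,\mathrm{mon}}$ --- depends only on $H_{\mathrm{red}}$, I would first reduce to the case that $H$ is reduced. As $H$ is finitely generated, $\mathcal A(H)$ is finite, so $\mathsf Z(H)$ is a free abelian monoid of finite rank and $F=\mathsf Z(H)\times\mathsf Z(H)\times\N_0$ is a free abelian monoid of finite rank. Set
\[
M=\lbrace(x,y,s)\in F\mid\pi_H(x)=\pi_H(y)\ \mbox{ and }\ |x|+s=|y|\rbrace .
\]
This is plainly a submonoid of $F$, and the projection $(x,y,s)\mapsto(x,y)$ restricts to a monoid isomorphism $M\to\sim_{H,\mathrm{mon}}$: it is injective because $s=|y|-|x|$ is determined by $(x,y)$, and its image is $\lbrace(x,y)\mid\pi_H(x)=\pi_H(y),\ |x|\le|y|\rbrace=\sim_{H,\mathrm{mon}}$.

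The main step is to verify that $M$ is saturated in $F$. Suppose $(x,y,s),(x',y',s')\in M$ and $(x'',y'',s'')\in F$ with $(x,y,s)=(x',y',s')+(x'',y'',s'')$. Comparing the first two coordinates and cancelling in the cancellative monoid $H_{\mathrm{red}}$ gives $\pi_H(x'')=\pi_H(y'')$; comparing lengths and subtracting the relation $|x'|+s'=|y'|$ gives $|x''|+s''=|y''|$; hence $(x'',y'',s'')\in M$. By \cite[Theorem 2.4.8.1]{MR2194494}, $M$ is therefore a Krull monoid, and, being a saturated submonoid of the finitely generated free monoid $F$, it satisfies $M=\mathsf q(M)\cap F$ and is thus finitely generated (the intersection of the finitely generated group $\mathsf q(M)$ with $F$, i.e.\ the lattice points of a rational polyhedral cone, so Gordan's lemma applies; this is the same argument used for $\sim_{H,\mathrm{eq}}$ in \cite[Proposition 4.4.2]{garger10}). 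Transporting along $M\cong\sim_{H,\mathrm{mon}}$ yields the claim.

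I expect the only genuinely delicate point to be the saturation check --- making sure that \emph{both} defining equations of $M$ pass to the complementary factor --- together with the standard appeal to Gordan's lemma for finite generation; everything else is routine bookkeeping.
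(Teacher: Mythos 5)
Your proof is correct and uses essentially the same idea as the paper: introduce a slack variable turning the length inequality $|x|\leq|y|$ into the equation $|x|+s=|y|$, so that $\sim_{H,\mathrm{mon}}$ becomes (isomorphic to) the solution monoid of a finite system of linear Diophantine equations, i.e.\ a saturated submonoid of a finitely generated free monoid. The paper phrases this in terms of adding one equation and one variable to the system \eqref{eq:system} and cites \cite[Theorem 2.7.14]{MR2194494}, whereas you verify the saturation of the slack-augmented monoid directly --- a slightly more self-contained packaging of the same argument.
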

\begin{proof}
Let $H$ be a finitely generated monoid. Since $\sim_H\subset\mathsf Z(H)\times\mathsf Z(H)$ is then a saturated submonoid of a finitely generated monoid, $\sim_H$ is finitely generated by \cite[Proposition 2.7.5]{MR2194494}. Now assume $\sim_H$ has $n\in\N$ generators. Then the atoms of $\sim_H$ can be described as the minimal solutions of a system of finitely many, say $k$, linear diophantine equations in $2n$ variables as in step \ref{step:1} above. Then the atoms of $\sim_{H,\mathrm{mon}}$ can be described as the minimal solutions of a system of $k+1$ linear diophantine equations in $2n+1$ variables---see below for the explicit description of this system of linear diophantine equations. Thus $H$ is a finitely generated Krull monoid by \cite[Theorem 2.7.14]{MR2194494}.
\end{proof}

The system is \eqref{eq:system}, with one additional variable $z$ and one equation, namely,
\[
x_1+\ldots+x_k-y_1-\ldots-y_k+z= 0.
\]
The coefficients at $z$ are zero in all other equations.
Now we have two possibilities.
\begin{itemize}
\item Either we proceed by the same steps as in \ref{step:1} and solve this directly
\item or we use the incremental version of the algorithm of Devie and Contejoud (see~\cite[Section 9]{MR1283022}) and the set $\mathcal A(\sim_S)$, which we already computed in \ref{step:1}.
\end{itemize}
Next we can reduce the set of relations which we must consider, as in \ref{step:2}.
By \autoref{3.2}.\ref{3.2.4}, we have to consider only elements $a\in S$ such that $\mathcal A_a(\sim_{S,\mathrm{mon}})\neq\emptyset$. Then we get the adjacent catenary degree by taking the maximum over $\muadj (a)$ for all those $a$.
For the computation of the equal catenary degree, we must know the elements of $\mathcal A(\sim_{S,\mathrm{eq}})$. But these are already known, since $\mathcal A(\sim_{S,\mathrm{eq}})\subset\mathcal A(\sim_{S,\mathrm{mon}})$. Here we can again reduce the set of relations which we must consider, as in \ref{step:2}. By \autoref{3.2}.\ref{3.2.4}, we have to consider only elements $a\in S$ such that $\mathcal A_a(\sim_{S,\mathrm{eq}})\neq\emptyset$ and $|\mathcal R_{a,k}|>1$ for some $k\in\mathsf L(a)$. Now this can be done by applying the RCF, \autoref{rcf}, to $\mathsf Z_k(a)$ instead of $\mathsf Z(a)$. Then we get the equal catenary degree by taking the maximum over $\mueq(a)$ for all those $a$.

Now we find the monotone catenary degree by $\cmon(S)=\max\lbrace\cadj(S),\ceq(S)\rbrace$.

\smallskip
\subsubsection{Reducing the computation time for the catenary degree}
\label{step:7}
\mbox{}\\
If we are only interested in the computation of the catenary degree, we can speed up the very time consuming computations in \autoref{step:1} in the following way. In favor of \autoref{2.7}.\ref{2.7.2}, we may restrict our search for minimal solutions of the system of linear diophantine equations~\eqref{system} to solutions $(x_1,\ldots,x_{k+r},y_1,\ldots,y_{k+r})$ such that $\sum_{i=1}^kx_i\leq\mathsf c(S)$ and $\sum_{i=1}^ky_i\leq\mathsf c(S)$. Of course, we do not know $\mathsf c(S)$ a priori, but we may replace it with any upper bound---the better the bound, the faster the computation. In our special situation of $T$-block monoids, we can find a reasonably good bound by \cite[Theorem 3.6.4.1]{MR2194494} and by \cite[Proposition 3.6.6]{MR2194494}. Formulated in our terminology, these results read as follows.
\begin{theorem}
\label{5.12}
Let $G$ be an additively written abelian group, $T$ a reduced finitely generated monoid, $\iota:T\rightarrow G$ a homomorphism, and $\mathcal B(G,T,\iota)\subset\mathcal F(G)\times T$ the $T$-block monoid over $G$ defined by $\iota$. Then
\begin{enumerate}
\item $\rho(\mathcal B(G,T,\iota),\mathcal F(G)\times T)\leq\rho(T)$.
\item $\mathsf c(\mathcal B(G,T,\iota))\leq\rho(T)\mathsf D(G)\max\lbrace\mathsf c(T),\mathsf D(G)\rbrace$.
\end{enumerate}
\end{theorem}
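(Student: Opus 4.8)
The statement is a reformulation in our terminology of \cite[Theorem 3.6.4.1]{MR2194494} and \cite[Proposition 3.6.6]{MR2194494}, so the plan is not to reprove those results but to identify the objects involved and to check that the numerical quantities match up. Write $D=\mathcal F(G)\times T$. Since $\mathcal F(G)$ is free and $T$ is reduced and finitely generated, hence atomic, $D$ is a reduced atomic monoid, and $\mathcal B(G,T,\iota)\subset D$ is a saturated submonoid with class group $\mathsf q(D/\mathcal B(G,T,\iota))=G$ by construction, exactly as in the setup of \autoref{5.4}. I would first record that the hypothesis requiring every class of $G$ to contain a prime, which was needed in \autoref{5.4} only for the transfer homomorphism $\beta$ to be onto, plays no role here: what enters below is solely the saturated-submonoid structure together with the combinatorial length bound $|S|\leq\mathsf D(G)$ for an atom $S$ of $\mathcal B(G)$. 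Note also that $G$ need not be finite; if $\mathsf D(G)=\infty$ both asserted bounds are trivially true, and the finite generation of $T$ is used only to guarantee $\rho(T)<\infty$ and $\mathsf c(T)<\infty$.

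For part~(1) the first step is to unwind the definition of the relative elasticity $\rho(\mathcal B(G,T,\iota),D)$, which compares, for a fixed element of $\mathcal B(G,T,\iota)$, the lengths of its factorizations inside $\mathcal B(G,T,\iota)$ against those inside $D$. Writing such an element as $St$ with $\sigma(S)+\iota(t)=\mathbf 0$, one observes that the $\mathcal F(G)$-part of any factorization in $D$ contributes precisely $|S|$ prime divisors, independently of the chosen factorization, so that the contributions of $\mathcal F(G)$ cancel in numerator and denominator and only the $T$-part survives; hence the relative elasticity is controlled by the elasticity of $T$. This is the content of \cite[Theorem 3.6.4.1]{MR2194494}, which I would simply invoke after this identification.

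For part~(2) the plan is to feed part~(1) into the general inequality bounding the catenary degree of a saturated submonoid $H\subset D$ with class group $G$ in terms of $\rho(H,D)$, $\mathsf D(G)$ and $\mathsf c(D)$, namely $\mathsf c(H)\leq\rho(H,D)\,\mathsf D(G)\max\lbrace\mathsf c(D),\mathsf D(G)\rbrace$, which is \cite[Proposition 3.6.6]{MR2194494}. Here $\mathsf c(D)=\mathsf c(\mathcal F(G)\times T)=\mathsf c(T)$, since $\mathcal F(G)$ is factorial and the catenary degree of a direct product is the maximum of the catenary degrees of the factors, while $\rho(H,D)\leq\rho(T)$ by part~(1); substituting yields $\mathsf c(\mathcal B(G,T,\iota))\leq\rho(T)\mathsf D(G)\max\lbrace\mathsf c(T),\mathsf D(G)\rbrace$. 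The only genuinely delicate point — and hence the main obstacle — is the bookkeeping: one must verify that our definitions of $\mathcal B(G,T,\iota)$ and of the invariants $\rho(\cdot,\cdot)$, $\mathsf c(\cdot)$ and $\mathsf D(\cdot)$ agree with the conventions of \cite{MR2194494} used in those two references, so that the quoted inequalities can be applied verbatim.
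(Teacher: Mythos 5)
Your proposal is correct and matches the paper exactly: the paper gives no separate proof of this theorem but presents it as \cite[Theorem 3.6.4.1]{MR2194494} and \cite[Proposition 3.6.6]{MR2194494} ``formulated in our terminology,'' which is precisely the route you take. Your additional bookkeeping (saturation of $\mathcal B(G,T,\iota)$ in $\mathcal F(G)\times T$, class group $G$, and $\mathsf c(\mathcal F(G)\times T)=\mathsf c(T)$) is sound and only makes the translation more explicit than the paper does.
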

Now we set $C=\rho(T)\mathsf D(G)\max\lbrace\mathsf c(T),\mathsf D(G)\rbrace$ for the upper bound. Though this does not speed up the search for minimal solutions itself that much, it is a very efficient (additional) termination criterion in our variant of the algorithm due to Contejean and Devie; for reference on the originally proposed algorithm, see~\cite{MR1283022}.\\
Unfortunately, this method has one drawback for the computation of the elasticity and the tame degree. As we no longer compute all minimal solutions to our system of linear diophantine equations, we no longer compute all elements in $\mathcal A(\sim_S)$, and therefore we cannot compute more than a lower bound for the elasticity in \autoref{step:3} and for the tame degree in \autoref{step:5}.

\smallskip
\subsubsection{Computing the elasticity from an appropriate subset of $\mathcal A(\sim_S)$}
\label{step:8}
\mbox{}\\
In \cite{MR1135355}, Domenjoud proposed an algorithm for computing the set of minimal solutions of a system of linear diophantine equations, which computes the set of minimal solutions with minimal support in a first step. All other minimal solutions can then be found by ``appropriate'' linear combinations of them using non-negative rational coefficients. With this interesting fact in mind, we consider the following lemma.

\begin{definition}
Let $H$ be an atomic monoid. For $x\in\mathsf Z(H)$, we set
\[
\supp(x)=\lbrace u\in\mathcal A(H_{\mathrm{red}})\mid u\mid x\rbrace.
\]
\end{definition}

\begin{lemma}
Let $H$ be a finitely generated monoid. Then
\[
 \rho(H)=\sup\left\lbrace\left.\frac{|x|}{|y|}\right|(x,y)\in\mathcal A'(\sim_H)\right\rbrace\,,
\]
where $\mathcal A'(\sim_H)=\lbrace(x,y)\in\mathcal A(\sim_H)\mid\supp(x)\cup\supp(y)\mbox{ is minimal}\rbrace$.
\end{lemma}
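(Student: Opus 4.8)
The plan is to combine \cite[Proposition 14.2]{phil10}, which expresses the elasticity through the atoms of $\sim_H$, with Domenjoud's theorem on minimal solutions of linear Diophantine systems \cite{MR1135355} and an elementary mediant inequality. Since $H$ is finitely generated, $\mathsf Z(H)=\mathcal F(\mathcal A(H_{\mathrm{red}}))$ is a finitely generated free monoid, so $\sim_H$ is a finitely generated saturated submonoid and $\mathcal A(\sim_H)$ is finite. By \cite[Proposition 14.2]{phil10} (compare \autoref{step:3}) we have $\rho(H)=\sup\lbrace |x|/|y|\mid (x,y)\in\mathcal A(\sim_H)\rbrace$, where we use that $\mathcal A(\sim_H)$ is closed under the flip $(x,y)\mapsto(y,x)$. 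As $\mathcal A'(\sim_H)\subset\mathcal A(\sim_H)$, only the inequality $\rho(H)\le\sup\lbrace |x|/|y|\mid (x,y)\in\mathcal A'(\sim_H)\rbrace$ needs proof, i.e.\ it suffices to bound $|x|/|y|$ for an arbitrary $(x,y)\in\mathcal A(\sim_H)$.

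Fix such an atom $(x,y)$ and pass to the coordinate description from \autoref{step:1}: with $\mathcal A(H_{\mathrm{red}})=\lbrace u_1,\ldots,u_k\rbrace$, the monoid $\sim_H$ is (isomorphic to) the monoid of non-negative integral solutions of a homogeneous linear Diophantine system in the variables $x_1,\ldots,x_k,y_1,\ldots,y_k$, the set $\mathcal A(\sim_H)$ consists of the minimal non-zero solutions, and $\supp(x)\cup\supp(y)$ corresponds to the support (the set of non-vanishing coordinates) of a solution. By Domenjoud's theorem, every minimal solution is a non-negative rational linear combination of minimal solutions of minimal support whose supports are contained in its own; unwinding the identification, this gives $(x_1,y_1),\ldots,(x_n,y_n)\in\mathcal A'(\sim_H)$ and $\lambda_1,\ldots,\lambda_n\in\Q_{\ge 0}$, not all zero, with $(x,y)=\sum_{i=1}^n\lambda_i(x_i,y_i)$ in $\Q^{\mathcal A(H_{\mathrm{red}})}\times\Q^{\mathcal A(H_{\mathrm{red}})}$.

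Now the length map $\mathsf Z(H)\to\N_0$, $z\mapsto |z|$, extends to a $\Q$-linear form, so $|x|=\sum_i\lambda_i|x_i|$ and $|y|=\sum_i\lambda_i|y_i|$. Every atom of $\sim_H$ has both components of length at least $1$, hence $|x_i|,|y_i|\ge 1$ and $|y|\ge 1$; setting $M=\max_i |x_i|/|y_i|$ we obtain $|x_i|\le M|y_i|$ for all $i$, and therefore $|x|=\sum_i\lambda_i|x_i|\le M\sum_i\lambda_i|y_i|=M|y|$. Thus $|x|/|y|\le M\le\sup\lbrace |x'|/|y'|\mid (x',y')\in\mathcal A'(\sim_H)\rbrace$; together with the first step this gives $\rho(H)\le\sup\lbrace |x'|/|y'|\mid (x',y')\in\mathcal A'(\sim_H)\rbrace\le\rho(H)$, proving the lemma. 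Trivial atoms $(u,u)$ only contribute the ratio $1\le\rho(H)$ and may be disregarded throughout.

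The step that I expect to require the most care is the application of Domenjoud's theorem: one has to fix its precise formulation (every Hilbert-basis vector is a $\Q_{\ge 0}$-combination of minimal-support Hilbert-basis vectors supported within it) and, crucially, verify that the notion of support attached to the system describing $\sim_H$ matches ``$\supp(x)\cup\supp(y)$ is minimal'', so that the summands $(x_i,y_i)$ really lie in $\mathcal A'(\sim_H)$ and not merely in some weaker minimal-support class. Once this is settled, the passage to a finite problem via \cite[Proposition 14.2]{phil10} and the mediant estimate are entirely routine.
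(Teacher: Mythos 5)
Your proposal is correct and follows essentially the same route as the paper: reduce to $\rho(H)=\sup\lbrace |x|/|y|\mid (x,y)\in\mathcal A(\sim_H)\rbrace$ via \cite[Proposition 14.2]{phil10}, decompose an arbitrary atom of $\sim_H$ as a non-negative rational combination of minimal-support atoms using Domenjoud's theorem \cite[Theorem 3]{MR1135355}, and conclude with the weighted-mediant estimate on lengths. The only difference is that you spell out a few details (the flip symmetry, $|y|\geq 1$, the matching of support notions) that the paper leaves implicit.
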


\begin{proof}
Let $(x,y)\in\mathcal A(\sim_H)$. Then there are $n\in\N$, $(x_i,y_i)\in\mathcal A'(\sim_H)$, and $q_i\in\Q$ with $0\leq q_i<1$ for $i\in[1,n]$ such that
\[
 (x,y)=\prod_{i=1}^n(x_i,y_i)^{q_i}.
\]
Such a decomposition exists, since the equivalent one exists for the set of solutions of the associated system of linear diophantine equations, see \cite[Theorem 3]{MR1135355}.
When we pass to the lengths, we find $|x|=\sum_{i=1}^n q_i|x_i|$ and $|y|=\sum_{i=1}^n q_i|y_i|$. This yields
\[
\frac{|x|}{|y|}\cdot |y|=|x|=\sum_{i=1}^nq_i|x_i|=\sum_{i=1}^nq_i\frac{|x_i|}{|y_i|}|y_i|\leq\max_{i=1}^n\frac{|x_i|}{|y_i|}\sum_{i=1}^nq_i|y_i|=\max_{i=1}^n\frac{|x_i|}{|y_i|}\cdot|y|\,.
\]
Thus we find
\[
 \frac{|x|}{|y|}\leq\max_{i=1}^n\frac{|x_i|}{|y_i|}.
\]
Since $\mathcal A'(\sim_H)\subset\mathcal A(\sim_H)$, the assertion now follows by \cite[Proposition 14.2]{phil10}.
\end{proof}

Thus we can restrict ourselves to minimal solutions with minimal support for computing the elasticity.\\
As far as computational performance is concerned, the most interesting point of this approach is that there are straightforward optimizations of Domenjoud's algorithm for symmetric systems of linear diophantine equations like the one in~\eqref{system}.

\smallskip
\subsection{Three explicit examples}
\label{example:section}
\mbox{}\\
Let $p\in\mathbb P$ be a prime. Let $R=\F_p[X^{e_1},X^{e_2}]$ with $e_1,\,e_2\geq 2$. Then $R$ is a one-dimensional noetherian domain with integral closure $\widehat R=\F_p[X]$ and conductor $\mathfrak f=(R:\widehat R)=X^f\widehat R$, where $X\in\widehat R$ is a prime element and $f$ is the frobenius number of the numerical monoid generated by $e_1$ and $e_2$. Thus $R$ is an order in the Dedekind domain $\widehat R$, and $X\widehat R$ is the only maximal ideal of $\widehat R$ containing $\mathfrak f$. Furthermore, $\wmal R=\mal R=\mal\F_p$. By the computations in \cite[Special case 3.2 in Example 3.7.3]{MR2194494}, we have $G=\pic(R)\cong\F_p$.

For a more detailed presentation of the explicit computations, the reader is referred to \cite[Section 2.3.5]{phd}.

\smallskip
\subsubsection{$\F_3[X^2,X^3]$}
\mbox{}\\
Now let $p=3$. Since $|G|=3$, we write $G=\lbrace\mathbf 0,\mathbf e,\mathbf{e'}\rbrace$. Clearly---or by applying the ACA1, see \autoref{aca1}---we have $\mathcal A(G)=\lbrace\mathbf 0,\mathbf{ee'},\mathbf e^3,\mathbf{e'}^3\rbrace$. Now we apply \cite[Theorem 3.7.1]{MR2194494} and switch to the block monoid, which is a $T$-block monoid over $G$, say $\mathcal B(G,T,\iota)\subset\mathcal F(G)\times T$, where $T$ is the reduced finitely primary monoid generated by $\mathcal A(T)=\lbrace X^ng\mid n\in\lbrace 2,3\rbrace,\,g\in G\rbrace$ and $\iota$ is the uniquely determinated homomorphism $\iota:G\rightarrow T$ such that $\iota(X^ng)=g$ for all $n\in\lbrace 2,3\rbrace$ and $g\in G$.\\
Now we apply the ACA2, see \autoref{aca2}, and find
\begin{align*}
\mathcal A(\mathcal B(G,T,\iota))=\lbrace &(\mathbf 0,1),(\mathbf{ee'},1),(\mathbf e^3,1),(\mathbf{e'}^3,1),(1,X^2\mathbf 0),(1,X^3\mathbf 0),(\mathbf e,X^2\mathbf{e'}),(\mathbf e,X^3\mathbf{e'}),\\
&(\mathbf{e'},X^2\mathbf e),(\mathbf{e'},X^3\mathbf e),(\mathbf e^2,X^2\mathbf e),(\mathbf e^2,X^3\mathbf e),(\mathbf{e'}^2,X^2\mathbf{e'}),(\mathbf{e'}^2,X^3\mathbf{e'})\rbrace.
\end{align*}
Using the construction from the beginning of \autoref{computing:section}, we find
\[
\widehat T\cong\N_0\times\Z/3\Z
\quad\mbox{and}\quad
\mathcal B(G,T,\iota)\cong S\subset\N_0^4\times\Z/3\Z
\,.
\]
Then, for the set of atoms, we find
\begin{align*}
\mathcal A(S)=\lbrace 
&(1,0,0,0,\bar 0),(0,1,1,0,\bar 0),(0,3,0,0,\bar 0),(0,0,3,0,\bar 0),(0,0,0,2,\bar 0),\\
&(0,0,0,3,\bar 0),(0,1,0,2,\bar 2),(0,1,0,3,\bar 2),(0,0,1,2,\bar 1),(0,0,1,3,\bar 1),\\
&(0,2,0,2,\bar 1),(0,2,0,3,\bar 1),(0,0,2,2,\bar 2),(0,0,2,3,\bar 2)\rbrace\,.
\end{align*}
Since the atom $(1,0,0,0,\bar 0)$ is prime, we can restrict to a monoid $\bar S\subset\N_0^3\times Z/3\Z$.
Now we can find everything by using the algorithms presented at the end of \autoref{computing:section}.
Even in the modified version of the algorithm in \autoref{step:1}---here the bound is $13.5$---we find about 7,500 minimal representations to consider after the reduction in \autoref{step:2}.

From those, we get $\mathsf c(\F_3[X^2,X^3])=3$ in \autoref{step:4}. Since we did not compute all minimal solutions, we find $\mathsf t(\F_3[X^2,X^3])\geq 4$ in \autoref{step:5}.\\
By using the alternative approach from \autoref{step:8}, we find $\rho(\F_3[X^2,X^3])=\frac{5}{2}$. Note that this particular result on the elasticity can also be obtained by \cite[Example 3.7.3, Special Case 3.2]{MR2194494}.

\smallskip
\subsubsection{$\F_2[X^2,X^3]$}
\mbox{}\\
Let $p=2$. Then $|G|=2$, so write $G=\lbrace\mathbf 0,\mathbf e\rbrace$. Obviously---or by applying the ACA1, see \autoref{aca1}---we have $\mathcal A(G)=\lbrace\mathbf 0,\mathbf e^2\rbrace$. Now we apply \cite[Theorem 3.7.1]{MR2194494} as in the case $p=3$ and switch to the block monoid, which is a $T$-block monoid over $G$, say $\mathcal B(G,T,\iota)\subset\mathcal F(G)\times T$, where $T$ is the reduced finitely primary monoid generated by $\mathcal A(T)=\lbrace X^ng\mid n\in\lbrace 2,3\rbrace,\,g\in G\rbrace$ and $\iota$ is the uniquely determinated homomorphism $\iota:G\rightarrow T$ such that $\iota(X^ng)=g$ for all $n\in\lbrace 2,3\rbrace$ and $g\in G$.\\
Now we apply the ACA2, see \autoref{aca2}, as before and find
\[
\mathcal A(\mathcal B(G,T,\iota))=\lbrace (\mathbf 0,1),(\mathbf e^2,1),(1,X^2\mathbf 0),(1,X^3\mathbf 0),(\mathbf e,X^2\mathbf e),(\mathbf e,X^3\mathbf e)\rbrace
\,.
\]

Using the construction from the beginning of \autoref{computing:section}, we find
\[
\widehat T\cong\N_0\times\Z/2\Z
\quad\mbox{and}\quad
\mathcal B(G,T,\iota)\cong S\subset\N_0^3\times\Z/2\Z
\,.
\]
Then, for the set of atoms, we find 
\[
\mathcal A(S) = \lbrace
(1,0,0,\bar 0),(0,2,0,\bar 0),(0,0,2,\bar 0),(0,0,3,\bar 0),(0,1,2,\bar 1),(0,1,3,\bar 1)
\rbrace\,.
\]
Since the atom $(1,0,0,\bar 0)$ is prime, we can use the same arguments as in \autoref{5.4}.\ref{5.4.4} and restrict to a monoid $\bar S\subset\N_0^2\times\Z/2\Z$ with a reduced set of atoms.

Since, in this case, \autoref{step:1} can be performed easily without any bound, we compute all atoms. 
Given this list, we immediately find $\rho(\F_2[X^2,X^3])=2$ in \autoref{step:3}. Note that this particular result on the elasticity can also be obtained by \cite[Example 3.7.3, Special Case 3.2]{MR2194494}.\\
Now we proceed with \autoref{step:4} and we deduce $\mathsf c(\F_2[X^2,X^3])=3$ and $\mathsf D(\F_2)+1+\mathsf t(S)=6\geq\mathsf t(\F_2[X^2,X^3])\geq\mathsf t(S)=3$.

Next, we compute the monotone catenary degree. For this, we proceed as in \autoref{step:6} and find $\cadj(S)=3$ and $\ceq(S)=3$, and thus $\cmon(\F_2[X^2,X^3])=\cmon(S)=3$.

\smallskip
\subsubsection{$\F_2[X^2,X^5]$}
\mbox{}\\
The results in this case differ slightly from then ones we obtained above. We have $|G|=2$, say $G=\lbrace\mathbf 0,\mathbf e\rbrace$. Again, we have $\mathcal A(G)=\lbrace\mathbf 0,\mathbf e^2\rbrace$. Now we apply \cite[Theorem 3.7.1]{MR2194494} as before and switch to the block monoid, which is a $T$-block monoid over $G$, say $\mathcal B(G,T,\iota)\subset\mathcal F(G)\times T$, where $T$ is the reduced finitely primary monoid generated by $\mathcal A(T)=\lbrace X^ng\mid n\in\lbrace 2,5\rbrace,\,g\in G\rbrace$ and $\iota$ is the uniquely determinated homomorphism $\iota:G\rightarrow T$ such that $\iota(X^ng)=g$ for all $n\in\lbrace 2,5\rbrace$ and $g\in G$.\\
Now we apply the ACA2, see \autoref{aca2}, as before and find
\[
\mathcal A(\mathcal B(G,T,\iota))=\lbrace (\mathbf 0,1),(\mathbf e^2,1),(1,X^2\mathbf 0),(1,X^5\mathbf 0),(\mathbf e,X^2\mathbf e),(\mathbf e,X^5\mathbf e)\rbrace
\,.
\]
Using the construction from the beginning of \autoref{computing:section}, we find
\[
\widehat T\cong\N_0\times\Z/2\Z
\quad\mbox{and}\quad
\mathcal B(G,T,\iota)\cong S\subset\N_0^3\times\Z/2\Z
\,.
\]
Then, for the set of atoms, we find
\[
\mathcal A(S) = \lbrace
(1,0,0,\bar 0),(0,2,0,\bar 0),(0,0,2,\bar 1),(0,0,5,\bar 1),(0,1,2,\bar 1),(0,1,5,\bar 1)
\rbrace\,.
\]
Since the atom $(1,0,0,\bar 0)$ is prime, we can use the same arguments as in \autoref{5.4}.\ref{5.4.4} and restrict to a monoid $\bar S\subset\N_0^2\times\Z/2\Z$ with a reduced set of atoms.

Since, in this case, \autoref{step:1} can be performed without any bound, we compute all atoms. Now, we find a list of $25$ atoms after \autoref{step:2}. Given this list, we immediately find $\rho(\F_2[X^2,X^5])=3$ in \autoref{step:3}.
Now we proceed with \autoref{step:4} and obtain $\mathsf t(S)=4$, $\mathsf c(\F_2[X^2,X^5])=5$, and $\mathsf D(\F_2)+1+\mathsf t(S)=7\geq\mathsf t(\F_2[X^2,X^5])\geq\max\lbrace\mathsf t(S),\mathsf c(\F_2[X^2,X^5])\rbrace=5$.
Next, we compute the monotone catenary degree. For this, we proceed as in \autoref{step:6} and start with the adjacent catenary degree. We find $\cadj(S)=5$. Next we compute the equal catenary degree and find $\ceq(S)=6$. Now we find $\cmon(\F_2[X^2,X^5])=\cmon(S)=6>5=\mathsf c(\F_2[X^2,X^5])$.

\bigskip


\begin{thebibliography}{10}

\bibitem{MR2265800}
V.~Barucci.
\newblock Numerical semigroup algebras.
\newblock In {\em Multiplicative ideal theory in commutative algebra}, pages
  39--53. Springer, New York, 2006.

\bibitem{MR1357822}
V.~Barucci, D.~E. Dobbs, and M.~Fontana.
\newblock Maximality properties in numerical semigroups and applications to
  one-dimensional analytically irreducible local domains.
\newblock {\em Mem. Amer. Math. Soc.}, 125(598):x+78, 1997.

\bibitem{garger10}
V.~Blanco, P.~A. García-Sánchez, and A.~Geroldinger.
\newblock Semigroup-theoretical characterizations of arithmetical invariants
  with applications to numerical monoids and {K}rull monoids.
\newblock manuscript.

\bibitem{MR2494887}
S.~T. Chapman, P.~A. Garc{\'{\i}}a-S{\'a}nchez, and D.~Llena.
\newblock The catenary and tame degree of numerical monoids.
\newblock {\em Forum Math.}, 21(1):117--129, 2009.

\bibitem{MR2254337}
S.~T. Chapman, P.~A. Garc{\'{\i}}a-S{\'a}nchez, D.~Llena, and J.~C. Rosales.
\newblock Presentations of finitely generated cancellative commutative monoids
  and nonnegative solutions of systems of linear equations.
\newblock {\em Discrete Appl. Math.}, 154(14):1947--1959, 2006.

\bibitem{MR2243561}
S.~T. Chapman, Pedro~A. Garc{\'{\i}}a-S{\'a}nchez, D.~Llena, Vadin Ponomarenko,
  and J.~C. Rosales.
\newblock The catenary and tame degree in finitely generated commutative
  cancellative monoids.
\newblock {\em Manuscripta Math.}, 120(3):253--264, 2006.

\bibitem{MR1283022}
E.~Contejean and H.~Devie.
\newblock An efficient incremental algorithm for solving systems of linear
  {D}iophantine equations.
\newblock {\em Inform. and Comput.}, 113(1):143--172, 1994.

\bibitem{MR1135355}
E.~Domenjoud.
\newblock Solving systems of linear {D}iophantine equations: an algebraic
  approach.
\newblock In {\em Mathematical foundations of computer science, 1991
  ({K}azimierz {D}olny, 1991)}, volume 520 of {\em Lecture Notes in Comput.
  Sci.}, pages 141--150. Springer, Berlin, 1991.

\bibitem{MR2387746}
A.~Foroutan.
\newblock Monotone chains of factorizations.
\newblock In {\em Focus on commutative rings research}, pages 107--130. Nova
  Sci. Publ., New York, 2006.

\bibitem{MR2140688}
A.~Foroutan and A.~Geroldinger.
\newblock Monotone chains of factorizations in {C}-monoids.
\newblock In {\em Arithmetical properties of commutative rings and monoids},
  volume 241 of {\em Lect. Notes Pure Appl. Math.}, pages 99--113. Chapman \&
  Hall/CRC, Boca Raton, FL, 2005.

\bibitem{MR2208111}
A.~Foroutan and W.~Hassler.
\newblock Chains of factorizations and factorizations with successive lengths.
\newblock {\em Comm. Algebra}, 34(3):939--972, 2006.

\bibitem{MR2660910}
A.~Geroldinger, D.~J. Grynkiewicz, G.~J. Schaeffer, and W.~A. Schmid.
\newblock On the arithmetic of {K}rull monoids with infinite cyclic class
  group.
\newblock {\em J. Pure Appl. Algebra}, 214(12):2219--2250, 2010.

\bibitem{Ge-Gr-Sc11a}
A.~Geroldinger, D.~J. Grynkiewicz, and W.~Schmid.
\newblock The catenary degree of {K}rull monoids {I}.
\newblock {\em J. Th\'eor. Nombres Bordeaux}, 23(1):137--169, 2011.

\bibitem{MR2194494}
A.~Geroldinger and F.~Halter-Koch.
\newblock {\em Non-unique factorizations}, volume 278 of {\em Pure and Applied
  Mathematics (Boca Raton)}.
\newblock Chapman \& Hall/CRC, Boca Raton, FL, 2006.
\newblock Algebraic, combinatorial and analytic theory.

\bibitem{MR2408326}
A.~Geroldinger and W.~Hassler.
\newblock Arithmetic of {M}ori domains and monoids.
\newblock {\em J. Algebra}, 319(8):3419--3463, 2008.

\bibitem{MR2660909}
A.~Geroldinger and F.~Kainrath.
\newblock On the arithmetic of tame monoids with applications to {K}rull
  monoids and {M}ori domains.
\newblock {\em J. Pure Appl. Algebra}, 214(12):2199--2218, 2010.

\bibitem{Ge-Li-Ph11}
A.~Geroldinger, A.~Liebmann, and A.~Philipp.
\newblock Long zero-sum free sequences.
\newblock {\em Periodica Mathematica Hungarica}, 2011.
\newblock in press.

\bibitem{phd-hassler}
W.~Hassler.
\newblock {\em Faktorisierung in eindimensionalen Integritätsbereichen}.
\newblock PhD thesis, Karl-Franzens Universität Graz, 2000.
\newblock Advisor: F. Halter-Koch.

\bibitem{Ka11}
F.~Kainrath.
\newblock Arithmetic of {M}ori domains and monoids{\rm \,:} the {G}lobal
  {C}ase.
\newblock manuscript.

\bibitem{MR2577137}
P.~Malcolmson and F.~Okoh.
\newblock Factorization in subalgebras of the polynomial algebra.
\newblock {\em Houston J. Math.}, 35(4):991--1012, 2009.

\bibitem{MR2483834}
P.~Malcolmson and F.~Okoh.
\newblock Power series extensions of half-factorial domains.
\newblock {\em J. Pure Appl. Algebra}, 213(4):493--495, 2009.

\bibitem{Om}
M.~Omidali.
\newblock The catenary and tame degree of certain numerical semigroups.
\newblock {\em Forum Mathematicum}, 2010.

\bibitem{Or-Ph-Sa-Sc10}
O.~Ordaz, A.~Philipp, I.~Santos, and W.~Schmid.
\newblock On the {O}lson and the strong {D}avenport constants.
\newblock {\em Journal de {T}héorie des {N}ombres {B}ordeaux}, 2011.
\newblock in press.

\bibitem{phil10}
A.~Philipp.
\newblock A characterization of arithmetical invariants by the monoid of
  relations.
\newblock {\em Semigroup Forum}, 81:424--434, 2010.

\bibitem{phd}
A.~Philipp.
\newblock {\em Non-unique factorizations --- {A} semigroup-theoretic
  algorithmic approach with applications to non-principal orders in algebraic
  number fields}.
\newblock PhD thesis, Karl-Franzens Universität Graz, 2010.
\newblock Advisors: F. Halter-Koch und A. Geroldinger.

\bibitem{phil11b}
A.~Philipp.
\newblock A precise result on the arithmetic of non-principal orders in
  algebraic number fields.
\newblock manuscript, 2011.

\bibitem{MR1484089}
J.~C. Rosales and P.~A. Garc{\'{\i}}a-S{\'a}nchez.
\newblock Nonnegative elements of subgroups of {${\bf Z}^n$}.
\newblock {\em Linear Algebra Appl.}, 270:351--357, 1998.

\bibitem{MR2549780}
J.~C. Rosales and P.~A. Garc{\'{\i}}a-S{\'a}nchez.
\newblock {\em Numerical semigroups}, volume~20 of {\em Developments in
  Mathematics}.
\newblock Springer, New York, 2009.

\bibitem{MR1719711}
J.~C. Rosales, P.~A. Garc{\'{\i}}a-S{\'a}nchez, and J.~M. Urbano-Blanco.
\newblock On presentations of commutative monoids.
\newblock {\em Internat. J. Algebra Comput.}, 9(5):539--553, 1999.

\end{thebibliography}
\end{document}